\documentclass{amsart}
\usepackage{amssymb,stix,bbm}
\usepackage[matrix,arrow,curve]{xy}
\usepackage{verbatim}
\usepackage{graphics}
\usepackage{a4wide,array}
\usepackage[utf8]{inputenc}
\usepackage{tikz}
\usepackage{tikz-cd} 

\newtheorem{theorem}{Theorem}[section]
\newtheorem{lemma}[theorem]{Lemma}
\newtheorem{remark}[theorem]{Remark}

\newtheorem{conjecture}[theorem]{Conjecture}

\newtheorem{proposition}[theorem]{Proposition}

\setcounter{equation}{0}

\setcounter{tocdepth}{3}
\usepackage{hyperref}
\hypersetup{
    colorlinks,
    citecolor=black,
    filecolor=black,
    linkcolor=black,
    urlcolor=black
}
\def\dd{\mathrm{d}}

\def\eps{\varepsilon}

\def\Z{\mathbf{Z}}
\def\N{\mathbf{N}}

\def\R{\mathbf{R}}

\def\Hom{\mathrm{Hom}}

\def\Id{\mathrm{Id}}
\def\into{\hookrightarrow}
\def\onto{\twoheadrightarrow}

\def\la{\lambda}

\def\KK{\mathcal{K}}
\def\LL{\mathbbm{L}}
\def\PF{\mathcal{P}_{\mathrm{f}}}
\def\met1{\mathbf{Met_1}}
\def\pmet1{\mathbf{PMet_1}}
\def\Set{\mathbf{Set}}
\def\Top{\mathbf{Top}}
\def\HoTop{\mathbf{HoTop}}
\def\DDelta{\mathbf{\Delta}}

\def\smet1{\mathbf{sMet_1}}
\def\spmet1{\mathbf{sPMet_1}}
\def\sSet{\mathbf{sSet}}
\def\HosSet{\mathbf{HosSet}}
\def\psSet{\mathbf{psSet}}
\def\Fun{\mathbf{Fun}}
\def\nabla{\triangledown}

\def\Sing{\mathrm{Sing}}
\def\PA{\mathrm{P}} 


\date{October 2, 2022.}

\begin{document}
\centerline{}

\title{Randomized simplicial sets}
\author[I.~Marin]{Ivan Marin}
\address{LAMFA, UMR CNRS 7352, Universit\'e de Picardie-Jules Verne, Amiens, France, and IMJ-PRG, UMR CNRS 7586, Paris.}
\email{ivan.marin@u-picardie.fr}
\email{marin@imj-prg.fr}
\medskip

\begin{abstract} We construct new geometric realizations of simplicial and pre-simplicial
sets where the standard $n$-simplex, viewed as the space of probability
measures on $n+1$ elements, is replaced by the space of $(n+1)$-valued random variables,
with the topology of probability convergence. We
prove that the map which associates to a random variable its probability law
is an homotopy equivalence from these new geometric realizations to
the classical ones. Finally, we prove that this realization provides a new Quillen equivalence
between simplicial sets and topological spaces.
\end{abstract}

\maketitle

\tableofcontents

\section{Introduction and main results}

\subsection{Context}
We continue the exploration of Simplicial Random Variables, as
initiated in \cite{SRV}. The observation at the starting point of \cite{SRV} was
that the usual geometric realization $|\KK|$ of a simplicial complex $\KK$
is given by the collection of all the probability measures on a vertex set $S$
whose support provides a face of the simplicial complex. Of course
the usual topology of this geometric realization is the weak topology,
which is not that natural in the realm of measure theory, but Dowker's theorem
tells us that the choice of topology is not that relevant in terms
of homotopy theory, and that one can choose a metric topology instead. Then, from the viewpoint of probability theory,
a natural object living above this geometric realization is the 
(metric) space $L(\KK)  \subset L^1(\Omega,S)$ of random variables with values in the vertex set $S$
whose essential image is a face of $\KK$. Here $(\Omega,\la)$ is an atomless (complete) probability space, the chosen metric on $L^1(\Omega,S)$
is $d(f,g) = \int d(f(t),g(t))\dd \la (t)$, where $S$ is endowed with the discrete metric
$d(x,y) = 1- \delta_{x,y}$. When $S$ is finite, the underlying topology of $L^1(\Omega,S)$
corresponds to the concept of convergence in probability of a sequence of random variables.
We proved in \cite{SRV} that this space has the same homotopy type as the ordinary realization, and that the natural
`probability law' map $L(\KK) \to |\KK|$ is a Serre fibration and a homotopy equivalence. Therefore these spaces of random variables provide
alternative constructions for the geometric realization of the simplicial complex $\KK$.

These alternative constructions have the following merit. 
Given a vertex set $S$, and $\KK$ a simplicial complex with vertices inside
$S$, then a free action of group $G$ on $S$ does not in general
induce a free action on $|\KK|$, but it \emph{does} provide a free action on
$L(\KK)$.

As a consequence, if $G$ is an arbitrary group, then one can consider with new eyes
the obvious candidate for
a universal simplicial complex being acting upon by $G$, the full collection
$\KK_G = \mathcal{P}_{\mathrm{f}}^*(G)$ of non-empty finite subsets of $G$.
The induced action of $G$ on $|\KK_G|$ is not free, thus preventing the construction of
a classifying space for $G$ as $|\KK_G|/G$. But the induced action of $G$
on $L(\KK_G)$ \emph{is} free, providing
an easy construction of a classifying space for $G$, as $L(\KK_G)/G = L^1(\Omega,G)/G$. The
properties of this construction have been studied separately in detail in \cite{CCS}.

\subsection{Constructions and results}

Here we consider the other
standard concept of simpliciality,
namely simplicial sets. Replacing in each case the $n$-simplex $\Delta_n$,
again considered as a space of probability measures, by the space
of $\nabla_n$ of random variables with values in $\{0,\dots,n \}$ yields new realizations of these simplicial
sets as spaces of simplicial random variables.

In order to be more precise, we first recall the basic concepts in the realm of simplicial sets.
We let $\Top$ be a convenient category of topological spaces containing the metrizable ones,
for instance the category of weakly Hausdorff and compactly generated topological
spaces, and $\Set$ the category of sets. In our conventions, compact (and paracompact) spaces are Hausdorff.
We denote $\DDelta$ the category with objects
the $[n] = \{ 0, \dots, n \}, n \in \Z_{\geq 0}$ and (weakly) increasing maps as morphisms.
A simplicial set
is a contravariant functor $F \in \Fun(\DDelta^{op},\Set)$, or equivalently
a graded set $F = \bigsqcup_{ n \geq 0} F_n$ with $F_n = F([n])$, equiped with an action on the right
of the category $\DDelta$. The elements of $F_n$ are called the $n$-simplices of $F$.

The usual geometric realization
has been defined by Milnor \cite{SSMILNOR} as follows.
Let  $\Delta_n = \{ (x_0,\dots,x_n) \in [0,1]^n ; 
\sum_i x_i = 1 \}$ endowed with the product topology of $[0,1]^n$. It defines a (covariant) functor $\Delta : \DDelta \to \Top$ via $\Delta([n]) = \Delta_n$
and, for $\sigma \in \Hom_{\DDelta}([n],[m])$, 
$$
\Delta(\sigma) : (x_0,\dots,x_n) \mapsto \left( \sum_{j \in \sigma^{-1}(i)} x_j \right)_{i=0,\dots,m}.
$$
From this functor, the geometric realization $|F|$ is classically defined as the quotient space of
$E = \bigsqcup_n \left( F_n \times \Delta_n\right) 
$
with $F_n = F([n])$ considered as a discrete topological space, by
 the equivalence relation $\sim$ generated by the
 relations $(\alpha \sigma,a) \sim (\alpha,\Delta(\sigma)(a))$, for $\sigma \in \DDelta$. It is a functor $|.| : \sSet \to \Top$ admitting for right adjoint
 the singular functor $X \mapsto \Sing(X)$ with $\Sing(X)_n$ equal to the
 set of maps $\Delta_n \to X$.  It can be seen as the colimit of the functor $\Delta \circ D_F : C_F \to \DDelta$,
 where $C_F$ is the category of simplices of $F$  (see \cite{FRPIC} \S 4.2)  and $D_F : C_F \to \DDelta$ the forgetful functor.

Now set $\nabla_n = L^1(\Omega,[n])$ considered as a (paracompact)
topological space, with topology defined by convergence in probability,
or equivalently as the underlying topology of the $L^1$ metric. 
We introduce the probability-law map $p_n : \nabla_n \to \Delta_n$,
mapping $f \in \nabla_n$ to $(\la(f^{-1}(i)))_{i=0,\dots,n} \in \Delta_n$.
The first statement suggesting that these concepts of probability theory
are well adapted to the topological simplicial context is the following one.

\begin{proposition} \label{prop:ptransfonat} The map $n \mapsto \nabla_n$ extends to
a functor $\DDelta \to \Top$, and the $(p_n)_{n \geq 0}$ define a natural
transformation $\nabla \leadsto \DDelta$.
\end{proposition}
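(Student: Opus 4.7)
The plan is to define $\nabla(\sigma)$ by post-composition and then verify the three required properties (well-definedness/continuity, functoriality, naturality) in turn.

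For $\sigma \in \Hom_\DDelta([n],[m])$, $\sigma$ is an ordinary set map $[n] \to [m]$, so it acts on random variables by post-composition: I set
$$
\nabla(\sigma) : \nabla_n \longrightarrow \nabla_m, \qquad f \longmapsto \sigma \circ f.
$$
The measurability of $\sigma \circ f$ is automatic (pre-images of points under $\sigma \circ f$ are finite unions of pre-images under $f$), so this genuinely sends $L^1(\Omega,[n])$ to $L^1(\Omega,[m])$. Functoriality, i.e. $\nabla(\tau \circ \sigma) = \nabla(\tau) \circ \nabla(\sigma)$ and $\nabla(\mathrm{id}_{[n]}) = \mathrm{id}_{\nabla_n}$, is then immediate from the associativity of function composition.

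Next I would check continuity of $\nabla(\sigma)$. With the discrete metric on $[n]$ and $[m]$, for any $f,g \in \nabla_n$ we have the pointwise implication $\sigma(f(t)) \neq \sigma(g(t)) \Rightarrow f(t) \neq g(t)$, hence
$$
d(\sigma \circ f, \sigma \circ g) \;=\; \int \bigl(1 - \delta_{\sigma f(t),\sigma g(t)}\bigr)\dd\la(t) \;\leq\; \int \bigl(1 - \delta_{f(t),g(t)}\bigr)\dd\la(t) \;=\; d(f,g).
$$
Thus $\nabla(\sigma)$ is even $1$-Lipschitz, a fortiori continuous, so $\nabla$ is a well-defined functor $\DDelta \to \Top$.

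Finally, for the naturality of $p=(p_n)$, I would compute both sides of $p_m \circ \nabla(\sigma) = \Delta(\sigma) \circ p_n$ on an arbitrary $f \in \nabla_n$. The $i$-th coordinate of the left-hand side is
$$
\la\bigl((\sigma \circ f)^{-1}(i)\bigr) \;=\; \la\Bigl(f^{-1}\bigl(\sigma^{-1}(i)\bigr)\Bigr) \;=\; \sum_{j \in \sigma^{-1}(i)} \la\bigl(f^{-1}(j)\bigr),
$$
using that the sets $f^{-1}(j)$ for $j \in \sigma^{-1}(i)$ are pairwise disjoint. The $i$-th coordinate of the right-hand side is, by the definition of $\Delta(\sigma)$ recalled in the excerpt, exactly $\sum_{j \in \sigma^{-1}(i)}(p_n(f))_j = \sum_{j \in \sigma^{-1}(i)}\la(f^{-1}(j))$, so the two expressions agree. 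There is no real obstacle here; the only mildly substantive step is the Lipschitz estimate giving continuity, and even that reduces to a one-line set-theoretic inclusion thanks to the discrete metric.
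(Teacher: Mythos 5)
Your proposal is correct and follows essentially the same route as the paper: define $\nabla(\sigma)$ by post-composition, establish continuity via the $1$-Lipschitz estimate coming from the discrete metric, and verify naturality by the same coordinatewise computation $\la((\sigma\circ f)^{-1}(i)) = \sum_{j \in \sigma^{-1}(i)}\la(f^{-1}(j))$. Nothing to add.
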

\begin{proof} For $\sigma : [n] \to [m]$,
the map $\nabla(\sigma)$ maps $f \in \nabla_n = L^1(\Omega,[n])$
to $\sigma \circ f \in L^1(\Omega,[m]) = \nabla_m$.
Moreover, for $f,g \in \nabla_n$, we have
$$
d\left(\nabla(\sigma)(f),\nabla(\sigma)(g)\right)
=
\int d\left(\sigma\circ f(t),\sigma\circ g(t)\right)
\leqslant
\int d\left( f(t), g(t)\right) = d(f,g)
$$
which proves that $\nabla(\sigma)$ is 1-Lipschitz and in particular
continuous. The property $\nabla(\sigma \circ \tau) = \nabla(\sigma) \circ \nabla(\tau)$
is clear, hence $\nabla$ defines a functor $\DDelta \to \Top$.

In order to prove that $n \mapsto p_n$ is a natural transformation, we
need to compare the elements
$p_m \circ \nabla(\sigma)(f)$ and $\Delta(\sigma) \circ p_n(f)$
of $\Delta_m$
for $\sigma \in \Hom_{\DDelta}([n],[m])$ and $f \in \nabla_n$. For $i \in [m]$,
we have
$$
(p_m \circ \nabla(\sigma)(f))_i =
\la((\sigma\circ f)^{-1}(i)) =
\la(f^{-1}(\sigma^{-1}(i))) = \sum_{j \in \sigma^{-1}(i)} \la(f^{-1}(j))
$$
while
$$
(\Delta(\sigma) \circ p_n(f))_i = \Delta(\sigma)((\la(f^{-1}(j)))_{j=0,\dots,n})_i
= \sum_{j `\in \sigma^{-1}(i)} \la(f^{-1}(j))
$$
and this proves the claim.
\end{proof}

From this, the realization $L(F)$ of $F$ as a random variable space functorially
associates to $F$
the quotient of $\bigsqcup_n \left( F_n \times \nabla_n\right)$
by the equivalence relation $\sim$ generated by the
 relations $(\alpha \sigma,a) \sim (\alpha,\nabla(\sigma)(a))$, for $\sigma \in \DDelta$. Although $\nabla_n$ is \emph{not} locally compact, the topology of $L(F)$
has the same degree of tameness as $|F|$ : it is paracompact, compactly generated
and perfectly normal (see Proposition \ref{prop:LFtame}).  
As in the classical case, $L(F)$ can be seen as the colimit of the
functor $\nabla \circ D_F : C_F \to \Top$. 
 By the general machinery (see e.g. \cite{MAY}, \cite{KAN}) this functor $L : \sSet \to \Top$
obviously admits a right adjoint $X \mapsto \Sing_{RV}(X)$
 with $\Sing_{RV}(X)_n$ equal to the set of maps $\nabla_n \to X$.
 
  Moreover, the natural transformation $p : \nabla \leadsto \Delta$
 immediately provides a map $p_F : L(F) \to | F |$ and commutative diagrams
$$
\xymatrix{
\Hom_{\Top}(|F|,X) \ar[d] \ar@{<->}[r] &\Hom_{\sSet}(F,\Sing X) \ar[d] \\
\Hom_{\Top}(L(F),X) \ar@{<->}[r] &\Hom_{\sSet}(F,\Sing_{RV} X) \\
} 
$$ 

Our first main result is the following one. 

\begin{theorem} (see Theorem \ref{theo:LwFw} and Section \ref{sect:kan}) For $F$ a simplicial set, the probability-law map $p_F : L(F) \to |F|$
is an homotopy equivalence. In particular, $L(F)$ has the homotopy type of a CW-complex. For $X$ a topological
space, $\Sing_{RV} X$ is a Kan complex.
\end{theorem}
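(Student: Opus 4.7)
For the homotopy equivalence $p_F : L(F) \to |F|$, I would induct along the skeleta $F = \bigcup_n F^{(n)}$. Both $|\cdot|$ and $L$ are left adjoints (to $\Sing$ and $\Sing_{RV}$), hence they preserve the attaching pushouts
$$
\xymatrix{
\bigsqcup_\alpha \partial \Delta^{n+1} \ar[r] \ar@{^{(}->}[d] & F^{(n)} \ar[d] \\
\bigsqcup_\alpha \Delta^{n+1} \ar[r] & F^{(n+1)}
}
$$
and the natural transformation $p$ induces a morphism between the resulting pushout squares in $\Top$. The base case $p_n : \nabla_n \to \Delta_n$ is a homotopy equivalence by the main theorem of \cite{SRV} applied to the standard simplex viewed as the full simplicial complex on $n+1$ vertices. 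The inductive step then invokes the gluing lemma for homotopy pushouts: provided the boundary inclusions $|\partial \Delta^{n+1}| \hookrightarrow \Delta_{n+1}$ and $L(\partial \Delta^{n+1}) \hookrightarrow \nabla_{n+1}$ are closed cofibrations, and the three vertical maps are homotopy equivalences (by the base case and induction), the induced map between the pushouts is again a homotopy equivalence. Passing to the colimit along the skeletal filtration concludes the proof, and the second assertion---that $L(F)$ has the homotopy type of a CW-complex---then follows at once since $|F|$ is one.

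For the Kan condition on $\Sing_{RV}(X)$, the $L \dashv \Sing_{RV}$ adjunction translates horn-filling into the following extension property: for every horn $\Lambda^n_k$ and every continuous $g : L(\Lambda^n_k) \to X$, $g$ must extend to a continuous map $\nabla_n \to X$. Since this must hold for arbitrary $X$, it is equivalent to producing a retraction $r : \nabla_n \to L(\Lambda^n_k)$. The key observation is that the square
$$
\xymatrix{
L(\Lambda^n_k) \ar[r] \ar[d] & \nabla_n \ar[d]^{p_n} \\
|\Lambda^n_k| \ar[r] & \Delta_n
}
$$
is cartesian, i.e.\ $L(\Lambda^n_k) = p_n^{-1}(|\Lambda^n_k|)$. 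Since $|\Lambda^n_k|$ is a strong deformation retract of $\Delta_n$ by the classical horn collapse, and $p_n$ is a Serre fibration by \cite{SRV}, the relative homotopy lifting property lifts this deformation retraction to a strong deformation retraction of $\nabla_n$ onto $L(\Lambda^n_k)$, producing the desired retract.

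The principal technical obstacle, common to both parts of the argument, is the verification of the cofibration (NDR) property for inclusions of the form $L(A) \hookrightarrow L(B)$ coming from simplicial subobjects $A \subset B$: this is needed both for the gluing lemma in the first part and for the relative lifting in the Kan argument. My approach would exploit the metric structure of $L(B) \subset L^1(\Omega,[n])$ together with the fact that each $L(A)$ is cut out as a closed subspace by measurable conditions on the essential image, which should yield a sufficiently explicit Urysohn-type function implementing the NDR structure.
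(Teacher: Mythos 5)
Your skeletal-induction scheme for $p_F$ is essentially the paper's own proof (attach $\bigsqcup_x\nabla_{n+1}$ along $\bigsqcup_x\partial\nabla_{n+1}$, apply the gluing theorem with the base case from \cite{SRV}, pass to the colimit), and your Kan argument (adjunction plus lifting the classical horn collapse through $p_n$) is a reasonable variant of the paper's explicit retraction of $\nabla_n$ onto $V_n^k=p_n^{-1}(\Lambda_n^k)$. However, both halves of your outline rest on the one step you postpone, and your sketch of that step would not work: exhibiting a Urysohn-type function cutting out $L(A)$ inside $L(B)$ does not give the cofibration property. In the metric space $\nabla_n$ \emph{every} closed subset is the zero set of a continuous function (perfect normality), yet closed subsets of metric spaces are not cofibrations in general; the NDR characterization requires, besides the function, a halo deformation of $\nabla_n$ fixing $\partial\nabla_n$ (resp. $V_n^k$) pointwise and pushing a neighbourhood into it, and your proposal contains no construction of such a deformation. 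This is precisely where the real work lies: the paper proves Proposition \ref{prop:cofibnabla} by writing down the classical retraction of $\Delta_n\times I$ onto $(\Delta_n\times\{0\})\cup(\partial\Delta_n\times I)$ and lifting it through $p_n$ using the strengthened homotopy lifting property of Proposition \ref{prop:HLPplus} (lifting against arbitrary spaces, with constant paths lifting to constant paths), and it handles $V_n^k\subset\nabla_n$ by explicit formulas in the measure algebra using the map $\mathbf{g}$. Some idea of this kind --- lifting an explicit simplex retraction, or explicit measure-algebra formulas --- is the missing ingredient, and without it the gluing lemma in your first part and the relative lifting in your second part are both unjustified.

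Two further points in the Kan argument. First, you quote only the Serre fibration property of $p_n$ from \cite{SRV}, but you must lift a homotopy of the non-CW space $\nabla_n$; what is needed (and what \cite{SRV} actually provides, see Proposition \ref{prop:HLPplus}) is the homotopy lifting property with respect to arbitrary spaces. Second, the ``relative'' lifting you invoke --- prescribing the lift to be stationary on $V_n^k\times I$ --- is Str\o m's lifting of a fibration against a \emph{cofibration}, so as stated it presupposes exactly the cofibration $V_n^k\subset\nabla_n$ that is still unproved; the circularity is avoidable by using instead the constant-path clause of Proposition \ref{prop:HLPplus}: since the horn collapse fixes $\Lambda_n^k$ pointwise, the lifted homotopy is automatically stationary over $V_n^k$ and its time-one map is the desired retraction (compare Proposition \ref{prop:VnrNablanstrongdefretract}, where the paper obtains the deformation retraction directly from $\mathbf{g}$). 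Finally, your cartesian-square identification $L(\Lambda_n^k)\cong p_n^{-1}(\Lambda_n^k)$ as a \emph{subspace} of $\nabla_n$, and the implicit claim that the skeletal pushouts computed by $L$ carry the expected topology, are true but not free; they are the content of Lemma \ref{lem:LGLFclosed} and Proposition \ref{prop:LXattachement}, which use the Eilenberg--Zilber property and minimal representatives.
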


Let $\mathbf{M}$ denote the subcategory of $\DDelta$
such that $\Hom_{\mathbf{M}}([n],[m])$ is the set of injective applications
inside $\Hom_{\DDelta}([n],[m])$. The elements of $\Hom_{\mathbf{M}}([n],[m])$
are called face maps. 

The elements of $\psSet = \Fun(\mathbf{M}^{op},\Set)$ are called pre-simplicial sets (other common terminologies: semi-simplicial sets, $\Delta$-sets).
In particular, to each
pre-simplicial set $F$ can be associated a geometric realization $\| F \|$ defined as
the quotient of $\bigsqcup_n \left( F_n \times \Delta_n\right) $
 by
 the equivalence relation $\sim$ generated by the
 relations $(\alpha \sigma,a) \sim (\alpha,\Delta(\sigma)(a))$, for $\sigma \in \mathbf{M}$.

As before, we can construct the quotient $\LL(F)$ of 
$\bigsqcup_n \left( F_n \times \nabla_n\right) $ by the equivalence relation generated
by $(\alpha \sigma,a) \sim (\alpha,\nabla(\sigma)(a))$, for $\sigma \in \mathbf{M}$.
Again, the natural transformation $p : \nabla \leadsto \Delta$
 immediately provides a probability-law map $p_F : \LL(F) \to \| F \|$. Our second main result is the
 following one. 

\begin{theorem} (see Theorem \ref{theo:semiLwFFw}) For $F$ a pre-simplicial set, the probability-law map $\LL(F) \to \| F\| $ is an homotopy equivalence.
In particular, $\LL(F)$ has the homotopy type of a CW-complex.
\end{theorem}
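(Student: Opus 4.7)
The plan is to reduce to the simplicial case already handled by the first main theorem. Let $(-)^+ : \psSet \to \sSet$ denote the left adjoint to the forgetful functor, concretely described (via the Eilenberg--Zilber decomposition of maps in $\DDelta$) by
$$
F^+_n = \bigsqcup_{\pi : [n] \twoheadrightarrow [m]} F_m,
$$
the disjoint union ranging over all surjections in $\DDelta$ with source $[n]$. For $\phi \in \Hom_{\DDelta}([k],[n])$ and $(\pi, x) \in F^+_n$, one factors the composite $\pi \circ \phi$ as $\iota \circ \pi'$ with $\pi'$ epi and $\iota$ mono, and sets $(\pi, x) \cdot \phi = (\pi', x \cdot \iota)$, using the pre-simplicial action of the face map $\iota$ on $x$.

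I would first recall the classical natural homeomorphism $\| F \| \cong | F^+ |$: every point of $| F^+ |$ is, thanks to the degeneracy relations, equivalent to a representative of the form $((\mathrm{id}_{[m]}, x), a)$ with $x \in F_m$ and $a \in \Delta_m$, and the remaining identifications among such representatives are precisely the face-map identifications defining $\| F \|$.

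The heart of the proof is to establish the analogous homeomorphism $\LL(F) \cong L(F^+)$. The natural inclusion $F_n \hookrightarrow F^+_n$, $x \mapsto (\mathrm{id}_{[n]}, x)$, induces a continuous map $\LL(F) \to L(F^+)$. Surjectivity on the underlying set follows from the degeneracy relation $((\pi, x), a) \sim ((\mathrm{id}_{[m]}, x), \nabla(\pi)(a))$ in $L(F^+)$, valid for any surjection $\pi$. Injectivity amounts to checking that two elements of $\bigsqcup_n F_n \times \nabla_n$ identified in $L(F^+)$ are already identified in $\LL(F)$; this follows from the uniqueness of the epi-mono factorization and the fact that every relation in $L(F^+)$ can be reduced, after passing to non-degenerate representatives, to a chain of face-map relations. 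The topological side is handled by the universal property of the quotient: both $\LL(F)$ and $L(F^+)$ arise as quotients of a common source by compatible equivalence relations.

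Finally, the naturality of $p : \nabla \leadsto \Delta$ supplied by Proposition \ref{prop:ptransfonat} makes the diagram
$$
\xymatrix{
\LL(F) \ar[r]^{\cong} \ar[d]_{p_F} & L(F^+) \ar[d]^{p_{F^+}} \\
\| F \| \ar[r]^{\cong} & | F^+ |
}
$$
commute, with $p_{F^+}$ a homotopy equivalence by the first main theorem; the desired conclusion for $p_F$ follows. The main obstacle lies in the verification of the random-variable homeomorphism $\LL(F) \cong L(F^+)$: one must check that the functoriality of $\nabla$ on all of $\DDelta$ (not only on $\mathbf{M}$) correctly implements the Eilenberg--Zilber collapse. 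Fortunately, this is essentially guaranteed by the 1-Lipschitz property of $\nabla(\sigma)$ established in Proposition \ref{prop:ptransfonat}, which also ensures that all the relevant maps are continuous.
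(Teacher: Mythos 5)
Your proposal is correct, but it takes a genuinely different route from the paper. The paper does not pass through the free simplicial set at all: it constructs an explicit section $\sigma_n : \Delta_n \to \nabla_n$ of $p_n$ and an explicit homotopy $H_n$ from $\Id_{\nabla_n}$ to $\sigma_n \circ p_n$ which commutes with the face maps $D^i_{RV}$, satisfies $p_n\circ H_n(u,\cdot)=p_n$, and fixes $\sigma_n(\Delta_n)$ pointwise (Proposition \ref{prop:commuteface}); these data glue over any pre-simplicial set to give $\sigma_F$, $p_F$, $H_F$ with $p_F\circ\sigma_F=\Id$ and $\sigma_F\circ p_F\simeq\Id$. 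Your reduction, via $\|F\|\cong|F^+|$ and $\LL(F)\cong L(F^+)$ followed by Theorem \ref{theo:LwFw}, is shorter and purely formal, and it does work: the key observation is that the non-degenerate simplices of $F^+$ are exactly the $(\mathrm{id}_{[n]},x)$ with $x\in F_n$. For the homeomorphism $\LL(F)\cong L(F^+)$ I would advise replacing your chain-reduction argument and the ``common source'' remark (the two spaces are quotients of \emph{different} disjoint unions; that $L(F^+)$ is also a quotient of $\bigsqcup_n F_n\times\nabla_n$ is the content of Proposition \ref{prop:LXattachement} applied to $F^+$) by simply writing down the inverse: $((\pi,x),a)\mapsto[x,\nabla(\pi)(a)]$ respects all relations of $L(F^+)$ by the epi-mono factorization of $\pi\circ\phi$, hence descends to a continuous map $L(F^+)\to\LL(F)$, and it is a two-sided inverse of the natural map because $(\mathrm{id}_{[m]},x)\cdot\pi=(\pi,x)$; only plain continuity of $\nabla(\pi)$ is needed here, the $1$-Lipschitz property and the Eilenberg--Zilber property play no role in this step. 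What each approach buys: yours is more economical and makes the statement an immediate corollary of the simplicial case, while the paper's explicit $\sigma_n$ and $H_n$ are not optional in its architecture --- their face-compatibility and the fact that they fix $\sigma_n(\Delta_n)$ are reused in Section \ref{sect:quillen} (proofs of Propositions \ref{prop:cofibnablacole} and \ref{prop:cofibrationfaiblecornet}) to show that $\partial\nabla_n\to\nabla_n$ and $V_n^r\to\nabla_n$ are Cole cofibrations, which your formal argument would not provide.
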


In this case, we are able to construct an \emph{explicit} homotopy,
which we will use in exploring the homotopic properties of the construction on simplicial
sets (for instance in the proof of Theorem \ref{theointro:QuillenEq} below).

Finally, recall that if $\KK$ is a simplicial complex over a \emph{totally ordered} set $S$ of vertices, then one can associate to it 
a simplicial set $S\KK$ 
and a pre-simplicial set $M \KK$ (see Section \ref{sect:orderedSC}). 
The next result says that the constructions of this paper
are compatible with the constructions of \cite{SRV} in this case.

\begin{theorem}\label{theo:comprealKK} (see Section \ref{sect:orderedSC})
Let $\KK$ be an ordered simplicial complex. Then $L(S \KK)$ and $L(M \KK)$ are homeomorphic,
and
$L(S \KK)$, $L(\KK)$ and $|\KK|$ are homotopically equivalent.
\end{theorem}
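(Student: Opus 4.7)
The plan is to establish the homeomorphism $L(S\KK)\cong \LL(M\KK)$ directly via the Eilenberg--Zilber lemma, and then to derive the three-way homotopy equivalence by chaining this homeomorphism with Theorem \ref{theo:semiLwFFw}, the classical identification $\|M\KK\|\cong|\KK|$ available for ordered simplicial complexes, and the main theorem of \cite{SRV}.

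For the homeomorphism, I would first exploit the fact that the non-degenerate simplices of $S\KK$ coincide with $M\KK$: an $n$-simplex of $S\KK$ is a weakly increasing map $[n]\to S$ whose image lies in $\KK$, and it is non-degenerate exactly when this map is strictly increasing. First construct the obvious continuous map $\iota : \LL(M\KK) \to L(S\KK)$ induced by the inclusion $M\KK\hookrightarrow S\KK$, using the universal property of the two quotient spaces. For the inverse, use the unique Eilenberg--Zilber factorization $\alpha = \beta\sigma$ of any $\alpha\in(S\KK)_n$ into a non-degenerate simplex $\beta\in(M\KK)_m$ and a surjection $\sigma:[n]\to[m]$, and define $\Phi:L(S\KK)\to\LL(M\KK)$ by $[\alpha,a]\mapsto[\beta,\nabla(\sigma)(a)]$. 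The main technical point is well-definedness on equivalence classes: given the generating relation $(\alpha\tau, b)\sim(\alpha,\nabla(\tau)(b))$ with $\tau\in\DDelta$, if $\sigma\tau$ has epi-mono factorization $\sigma\tau=\mu\sigma'$ with $\mu$ a face map, then the EZ factorization of $\alpha\tau$ is $(\beta\mu)\sigma'$ (note $\beta\mu\in M\KK$ as a composition of strictly increasing maps), and the relation $(\beta\mu,\nabla(\sigma')(b))\sim(\beta,\nabla(\mu)(\nabla(\sigma')(b)))$ available in $\LL(M\KK)$ combined with functoriality of $\nabla$ (Proposition \ref{prop:ptransfonat}) identifies the two representatives. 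Continuity of $\Phi$ is then immediate on each clopen component $\{\alpha\}\times\nabla_n$ since $\nabla(\sigma)$ is continuous, and the verification $\iota\circ\Phi=\mathrm{id}$, $\Phi\circ\iota=\mathrm{id}$ is direct at the level of representatives.

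For the homotopy equivalences, I chain the four identifications: $L(S\KK)\cong\LL(M\KK)$ from the first part, $\LL(M\KK)\simeq\|M\KK\|$ from Theorem \ref{theo:semiLwFFw}, $\|M\KK\|\cong|\KK|$ by the classical observation that the pre-simplicial gluing of $M\KK$ reproduces the geometric realization of $\KK$ face by face, and $|\KK|\simeq L(\KK)$ by the main theorem of \cite{SRV}. The principal obstacle throughout is the Eilenberg--Zilber bookkeeping for the well-definedness of $\Phi$; once that is in place, the rest is a purely formal assembly of results already available in this paper and in \cite{SRV}.
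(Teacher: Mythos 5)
Your proposal is correct, but it assembles the theorem along a somewhat different route than the paper. For the homeomorphism, the paper (Proposition \ref{prop:realhomeo}) does not build an explicit inverse: it uses the fact that $L(S\KK)$ is a quotient of $\bigsqcup_n (S\KK)_n^{\#}\times\nabla_n$ (Proposition \ref{prop:LXattachement}, the random-variable analogue of \cite{FRPIC} Cor.\ 4.3.2) and then observes that any identification $(\alpha f,a)\sim(\alpha,\nabla(f)(a))$ with both $\alpha$ and $\alpha f$ non-degenerate forces $f$ to be injective, because the vertex tuples are strictly increasing; hence the induced relation on the non-degenerate part is exactly the pre-simplicial one and the two quotient spaces coincide. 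Your explicit pair $(\iota,\Phi)$ built from the Eilenberg--Zilber factorization proves the same statement more by hand, and the key bookkeeping you isolate (for $\sigma\tau=\mu\sigma'$ the EZ factorization of $\alpha\tau$ is $(\beta\mu)\sigma'$, with $\beta\mu$ strictly increasing, hence non-degenerate and still supported on a face of $\KK$) is precisely what is needed and is correct; it buys you independence from the ``quotient of the non-degenerate part'' statement, at the cost of the well-definedness check you describe. For the homotopy equivalences, the paper's chain is $L(S\KK)\sim|S\KK|\sim|\KK|_w\sim|\KK|_m\sim L(\KK)$, using Theorem \ref{theo:LwFw}, the classical identification $|S\KK|\cong|\KK|_w$, Dowker's theorem, and \cite{SRV}; yours replaces the first two links by $L(S\KK)\cong\LL(M\KK)\sim\|M\KK\|\cong|\KK|$ via Theorem \ref{theo:semiLwFFw}, which is equally legitimate and has the advantage of using the explicit homotopy of Proposition \ref{prop:commuteface}. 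The one link you should state explicitly is the topology on $|\KK|$: the equivalence $L(\KK)\sim|\KK|$ from \cite{SRV} concerns the metric realization, while $\|M\KK\|$ carries the weak (CW) topology, so your chain still needs Dowker's theorem $|\KK|_w\sim|\KK|_m$, which the paper inserts as a separate step.
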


Comparing the above results with the results of \cite{SRV} for simplicial
complexes, this suggests the following conjecture.

\begin{conjecture} The maps $L(F) \to |F|$
and $\LL(F) \to \| F \|$
Hurewicz fibrations. 
\end{conjecture}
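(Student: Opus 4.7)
The plan is to invoke Dold's theorem: a continuous map is a Hurewicz fibration provided its base admits a numerable open cover over each member of which the restricted map is a Hurewicz fibration. Since $|F|$ and $\|F\|$ are CW-complexes, hence paracompact, numerable covers are abundant; a natural choice is the family of open stars of vertices of the barycentric subdivision of $|F|$ (respectively $\|F\|$). Each such star $U$ is contractible, and the restriction of $L(F) \to |F|$ above $U$ should, up to a deformation retract along the star structure, take the local product form $\mathring{\Delta}_n \times (\text{local fiber})$ for a suitable $n$, reducing the question to a single standard model.

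This reduces the whole conjecture to two points. The first, and by far the most substantial, is the \textbf{key local lemma}: for each $n \geq 0$, the probability-law map $p_n : \nabla_n \to \Delta_n$ is itself a Hurewicz fibration. To establish it, I would construct an explicit path-lifting function: given $f \in \nabla_n$ and a continuous path $\gamma : [0,1] \to \Delta_n$ with $\gamma(0) = p_n(f)$, produce a continuous lift $t \mapsto f_t$ with $f_0 = f$, depending continuously on $(f,\gamma)$ in the $L^1$ topology. Fix once and for all a measure-preserving isomorphism $\Omega \simeq [0,1]$. For each ordered pair $(i,j)$, elementary transfers of mass between $f^{-1}(\{i\})$ and $f^{-1}(\{j\})$ can then be defined canonically by swapping initial segments under the inherited linear order, with amounts prescribed by an optimal-transport-type coupling between $\gamma(0)$ and $\gamma(t)$; assembling these elementary moves in a fixed order yields the desired lift, continuous in $f$ because the elementary transfers depend on $f$ only through the indicator functions $\un_{f^{-1}(\{i\})}$. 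The second point is a gluing statement verifying that above each star of the barycentric subdivision the announced local product description really holds, so that Dold's criterion applies to the cover.

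The principal obstacle is the simplicial (as opposed to pre-simplicial) case. For $\LL(F) \to \|F\|$ the argument should be notably lighter, since the explicit homotopy produced in the proof of Theorem \ref{theo:semiLwFFw} essentially amounts to a path-lifting function, and no degenerate simplices complicate the gluing. For simplicial sets proper, the lift must additionally respect the identifications $(\alpha \sigma, a) \sim (\alpha, \nabla(\sigma)(a))$ generated by degeneracies $\sigma$, precisely at those configurations where some mass coordinate vanishes and the fiber of $p_n$ fails to vary in a bundle-theoretic fashion. A careful canonical convention for injecting mass into, or extracting it from, a component of measure zero, compatible with the action of degenerate simplices, together with a partition-of-unity argument to refine the star cover into a numerable one meeting Dold's hypotheses, is what would ultimately be needed to conclude.
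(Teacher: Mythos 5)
This statement is stated in the paper as an open conjecture: the paper gives no proof of it, so there is nothing to compare your argument against except the ingredients the paper already provides. Judged on its own terms, what you have written is a strategy outline, not a proof, and the weight is distributed in the wrong place. The step you call the key local lemma --- that $p_n:\nabla_n\to\Delta_n$ is a Hurewicz fibration --- is already available: Proposition \ref{prop:HLPplus} (quoted from \cite{SRV}) asserts the homotopy lifting property of $p_n$ with respect to an arbitrary space $X$, which is exactly the statement that $p_n$ is a Hurewicz fibration, so your optimal-transport construction would only reprove known material. The genuinely open difficulty is the one you defer with ``should'' and ``would ultimately be needed'': showing that the restriction of the quotient map $L(F)\to|F|$ (or $\LL(F)\to\|F\|$) over an open star is a fibration. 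Over such a star the preimage is assembled from pieces of $\nabla_m$ for all cells whose closure meets the star, glued along face (and, for $L(F)$, degeneracy) identifications, and the fibers degenerate there: components of the partition acquire measure zero as one approaches a face, so there is no evident local product structure and no argument in your text that one exists. This is precisely the point at which the conjecture is hard, and it is untouched.

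There is also a technical mismatch in the reduction itself. Dold's numerable-cover theorem gives a Hurewicz fibration only if each restriction over the cover is genuinely a Hurewicz fibration; what your sketch would produce --- a local description ``up to a deformation retract along the star structure'' of the form $\mathring{\Delta}_n\times(\text{local fiber})$ --- is at best a local fiber homotopy trivialization, which yields the weak covering homotopy property (a Dold fibration), not the Hurewicz fibration asserted by the conjecture. So even if the local analysis were carried out as described, an additional argument (for instance an explicit lifting function compatible with the identifications, in the spirit of the proof for simplicial complexes in \cite{SRV}) would still be required to close the gap between Dold fibration and Hurewicz fibration.
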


We then prove that this new adjunction $L : \sSet \leftrightarrows  \Top : \Sing_{RV}$ is well-behaved with respect to homotopy. Our first result in this direction
compares the homotopy types of $\Sing X$ and $\Sing_{RV} X$.

\begin{theorem}\label{theointro:eqSingSingRV} (see Theorem \ref{theo:eqSingSingRV})
For $X$ a topological space, the natural map $|\Sing X| \to |\Sing_{RV} X|$ is an homotopy equivalence.
In particular, $|\Sing_{RV} X|$ and $X$ have the same weak homotopy type.
\end{theorem}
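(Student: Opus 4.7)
The natural transformation $p:\nabla\leadsto\Delta$ of Proposition~\ref{prop:ptransfonat} induces, by postcomposition at each level, a natural simplicial map $\phi_X:\Sing X\to\Sing_{RV}X$ sending an $n$-simplex $\sigma:\Delta_n\to X$ to $\sigma\circ p_n:\nabla_n\to X$. Applying the classical realization functor gives the map $|\phi_X|:|\Sing X|\to|\Sing_{RV}X|$ of the statement. Since both source and target are CW complexes, Whitehead's theorem reduces the theorem to showing that $|\phi_X|$ is a weak equivalence; the ``in particular'' clause then follows upon composition with the classical Milnor equivalence $|\Sing X|\simeq X$. By naturality of $p$, the square
$$
\xymatrix{
L(\Sing X) \ar[r]^{L(\phi_X)} \ar[d]_{p_{\Sing X}} & L(\Sing_{RV} X) \ar[d]^{p_{\Sing_{RV} X}} \\
|\Sing X| \ar[r]_{|\phi_X|} & |\Sing_{RV} X|
}
$$
commutes, and the vertical maps are homotopy equivalences by Theorem~\ref{theo:LwFw}. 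It therefore suffices to prove that $L(\phi_X)$ is a weak equivalence.

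Let $\eps:|\Sing X|\to X$ and $\eps^{RV}:L(\Sing_{RV} X)\to X$ denote the counits of the adjunctions $|\cdot|\dashv\Sing$ and $L\dashv\Sing_{RV}$, both of which are ``evaluation'' maps. A direct computation on a representative $(\sigma,f)\in\Sing(X)_n\times\nabla_n$ of a point of $L(\Sing X)$ yields
$$
\eps^{RV}\circ L(\phi_X)(\sigma,f)\;=\;(\sigma\circ p_n)(f)\;=\;\sigma(p_n(f))\;=\;\eps\circ p_{\Sing X}(\sigma,f),
$$
so $\eps^{RV}\circ L(\phi_X)=\eps\circ p_{\Sing X}$. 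The right-hand side is a weak equivalence, being the composition of the classical Milnor equivalence $\eps$ (\cite{SSMILNOR}) with the homotopy equivalence $p_{\Sing X}$ of Theorem~\ref{theo:LwFw}. By the 2-out-of-3 property of weak equivalences it now suffices to show that $\eps^{RV}$ itself is a weak equivalence; then $L(\phi_X)$ is one, and the argument closes.

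The main obstacle is this last step. My strategy would be to adapt Milnor's classical argument to the random-variable simplices. By Theorem~\ref{theo:LwFw}, $\Sing_{RV}X$ is a Kan complex, so $\pi_n(|\Sing_{RV}X|,x_0)$ is computed combinatorially and identifies with homotopy classes of maps of pairs $(\nabla_n,\partial\nabla_n)\to(X,x_0)$, where $\partial\nabla_n\subset\nabla_n$ denotes the closed subset of random variables whose essential image is a proper subset of $[n]$. The probability-law map restricts to a map of pairs $p_n:(\nabla_n,\partial\nabla_n)\to(\Delta_n,\partial\Delta_n)$, and the key lemma should be that precomposition with $p_n$ induces a bijection on relative homotopy classes into any pointed space $(X,x_0)$. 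This should follow from $p_n$ being a Serre fibration with contractible fibers over both $\Delta_n$ and $\partial\Delta_n$ -- arguments parallel to those of \cite{SRV} for simplicial complexes -- together with suitable cofibrancy of the boundary inclusions. The non-compactness of $\nabla_n$ is the main technical subtlety to handle along the way.
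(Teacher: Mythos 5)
Your reduction is clean and correct as far as it goes: the naturality square, the counit identity $\eps^{RV}\circ L(\phi_X)=\eps\circ p_{\Sing X}$, and 2-out-of-3 do reduce the statement to the claim that the counit $\eps^{RV}:L(\Sing_{RV}X)\to X$ is a weak equivalence. But this last claim is not a technical remainder; it is essentially the whole content of the theorem. In the paper the implication runs the other way: the fact that $L(\Sing_{RV}X)\to X$ is a weak equivalence is \emph{deduced from} Theorem \ref{theo:eqSingSingRV} in the proof of Theorem \ref{theo:QuillenEq}. So unless you supply an independent proof of it, your argument has only reformulated the problem, and the sketch you give for that step has a genuine gap at exactly the hard point. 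Namely, you assert that $\pi_n(|\Sing_{RV}X|,x_0)$ "identifies with homotopy classes of maps of pairs $(\nabla_n,\partial\nabla_n)\to(X,x_0)$". The combinatorial description of $\pi_n$ of the Kan complex $\Sing_{RV}X$ gives classes of maps $z:\nabla_n\to X$ killing all faces, but with the equivalence relation generated by $(n{+}1)$-simplices, i.e.\ maps $\nabla_{n+1}\to X$ (adjointly, by cylinders of the form $L(F\times\Delta^1)$). Translating this relation into genuine topological homotopies $\nabla_n\times I\to X$ is precisely where Milnor's classical prism argument breaks down here, because $L$ does \emph{not} preserve finite products (the paper exhibits an explicit counterexample), so $L(\KK\times\mathcal{I})$ is not $L(\KK)\times I$. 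This is exactly what Propositions \ref{prop:goodcylinder} and \ref{prop:goodcylinderpointed} (i.e.\ Propositions \ref{prop:propequivalence} and \ref{prop:propcofib}, with the interpolation map $J$, the maps $G_{\pm}$, and the explicit deformation retractions) are for: they show $L(\KK\times\mathcal{I})$ is nonetheless a \emph{good cylinder} in Str\o m's structure, so that the $L$-image of the simplicial homotopy relation coincides with genuine homotopy.

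Your proposed ingredients -- $p_n$ a fibration with contractible fibers over $\Delta_n$ and $\partial\Delta_n$, plus cofibrancy of $\partial\nabla_n\subset\nabla_n$ -- do yield that $(\nabla_n,\partial\nabla_n)\to(\Delta_n,\partial\Delta_n)$ is a homotopy equivalence of pairs, hence a bijection on pair-homotopy classes into $(X,x_0)$; that part can be salvaged. What they do not give is the identification of the combinatorial homotopy relation on $(\Sing_{RV}X)_n$ with topological homotopy of pair maps, nor the compatibility of all these identifications with $\eps^{RV}$. Compare with the paper's route, which stays on the simplicial side: both $\Sing X$ and $\Sing_{RV}X$ are Kan (the Kan property of $\Sing_{RV}X$ comes from Section \ref{sect:kan}, not from Theorem \ref{theo:LwFw} as you cite), so it suffices to compare simplicial $\pi_0$ and $\pi_n$; by adjunction these are $[\,|\KK|,Z]$ and $[L(\KK),Z]$ for finite $\KK$ (spheres/boundaries of simplices), \emph{provided} the cylinder-object propositions are in place, and then Theorem \ref{theo:LwFw} for finite complexes finishes the argument. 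To complete your write-up you would have to import (or reprove) those cylinder propositions; without them the key lemma is unsupported.
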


Thus the natural map $\Sing X \to \Sing_{RV} X$ is a weak homotopy
equivalence inside $\sSet$. If $\Sing X$ is viewed as the $\infty$-groupoid $\pi_{\leq \infty} X$ of $X$, then $\Sing_{RV} X$ provides another construction
for it. More concretely, this Theorem implies in particular that the induced
morphism $\Z \Sing X \to \Z \Sing_{RV} X$ of simplicial abelian groups
is also a weak equivalence (see \cite{GOERSSJARDINE} Proposition III 2.16), so that the obvious `randomized singular
chain complex' of $X$, constructed in the same way as the classical singular chain complex
by replacing the collection of all maps $\Delta_n \to X$ with the
collection of all maps $\nabla_n \to X$,
has for homology the classical singular homology of $X$.

Then, 
we endow
$\Top$ with M. Cole's more flexible version of the standard (Quillen) model category structure. The homotopy equivalences for this structure are the usual weak homotopy equivalences.
 The
classical functors $|\bullet|$ and $\Sing$ together provide a Quillen equivalence between this structure
and the standard model structure on $\sSet$. We get the following `randomization' of this classical result.

\begin{theorem} \label{theointro:QuillenEq} (see Theorem \ref{theo:QuillenEq})
The functors $L : \sSet \to \Top$ and $\Sing_{RV} : \Top \to \sSet$ provide a Quillen equivalence
between $\sSet$ and $\Top$. In particular they induce an equivalence of categories between
the corresponding homotopy categories.
\end{theorem}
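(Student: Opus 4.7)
The plan is to verify the two defining conditions of a Quillen equivalence in turn: first, that $(L, \Sing_{RV})$ forms a Quillen adjunction for Cole's model structure on $\Top$; second, that its derived unit and counit are weak equivalences. Throughout, the main levers are the classical Quillen equivalence $(|{\cdot}|, \Sing)$, the natural transformation $p : L \to |{\cdot}|$ (whose components are homotopy equivalences by the first main theorem), and the natural transformation $\Sing \to \Sing_{RV}$ (whose components are weak equivalences by Theorem \ref{theointro:eqSingSingRV}).

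For the Quillen adjunction step, recall that cofibrations in $\sSet$ are the monomorphisms, generated under pushouts and transfinite composition by the inclusions $\partial \Delta[n] \hookrightarrow \Delta[n]$. Since $L$ is a left adjoint, it suffices to check this generator and the horn generators $\Lambda^k[n] \hookrightarrow \Delta[n]$ are sent to (resp.\ trivial) cofibrations. By Yoneda, $L(\Delta[n]) = \nabla_n$, while $L(\partial \Delta[n])$ identifies with the closed subspace $\{f \in \nabla_n : \exists i \in [n],\ \la(f^{-1}(i)) = 0\}$. The substantive step is to show this inclusion is a Hurewicz cofibration. Granting this, one obtains a Cole cofibration because $p$ provides a natural homotopy equivalence between this map and the Quillen cofibration $|\partial\Delta[n]| \hookrightarrow |\Delta[n]|$. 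For the horns, the same Hurewicz-cofibration step together with the first main theorem applied to $\Lambda^k[n] \hookrightarrow \Delta[n]$ (combined with the classical weak equivalence $|\Lambda^k[n]| \xrightarrow{\sim} |\Delta[n]|$) produces a trivial Cole cofibration.

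For the derived unit and counit, note that in Cole's structure all topological spaces are fibrant and all simplicial sets are cofibrant, so no replacement is required. For the unit $\eta^{rv}_F : F \to \Sing_{RV} L(F)$, I would consider the naturality square
\[
\begin{tikzcd}
F \arrow[r, "\eta^{cl}_F"] \arrow[d, "\eta^{rv}_F"'] & \Sing |F| \arrow[d] \\
\Sing_{RV} L(F) \arrow[r, "\Sing_{RV}(p_F)"'] & \Sing_{RV} |F|
\end{tikzcd}
\]
whose commutativity is a direct check on $n$-simplices: both composites send $\alpha \in F_n$ to the map $\nabla_n \to |F|$, $x \mapsto [\alpha, p_n(x)]$. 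The top arrow is a weak equivalence by the classical Quillen equivalence, the right arrow by Theorem \ref{theointro:eqSingSingRV}, and the bottom arrow because $\Sing_{RV}$ preserves weak equivalences (which follows from the isomorphism $\pi_n(\Sing_{RV} X) \cong \pi_n(X)$ deduced from Theorem \ref{theointro:eqSingSingRV} applied together with the first main theorem). Two-out-of-three then forces $\eta^{rv}_F$ to be a weak equivalence. The counit $\epsilon^{rv}_X : L \Sing_{RV} X \to X$ is handled by the parallel observation that the composite $L \Sing X \to L \Sing_{RV} X \xrightarrow{\epsilon^{rv}_X} X$ equals $L \Sing X \xrightarrow{p} |\Sing X| \xrightarrow{\epsilon^{cl}_X} X$, while $L \Sing X \to L \Sing_{RV} X$ is a weak equivalence by two-out-of-three applied to the square obtained from naturality of $p$.

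The principal obstacle I anticipate is the verification that $L(\partial \Delta[n]) \hookrightarrow \nabla_n$ (and more generally $L$ applied to any monomorphism of simplicial sets) is a Hurewicz cofibration, that is, that the closed pair has the homotopy extension property. I would attempt this by constructing an explicit NDR structure at the level of random variables: intuitively, a variable near the boundary subspace assigns very small mass to some value $i \in [n]$, and one can continuously redistribute that mass to obtain a deformation retraction of a suitable tubular neighborhood, in the spirit of the explicit homotopies developed for the pre-simplicial case. The general case would then follow from the standard closure properties of Hurewicz cofibrations under pushouts and transfinite compositions in $\Top$.
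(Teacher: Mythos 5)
Your overall architecture is the same as the paper's: verify the adjunction on the generating (trivial) cofibrations of $\sSet$, work in Cole's mixed structure, and then settle the equivalence by playing the natural transformations $p : L \to |\bullet|$ and $\Sing \to \Sing_{RV}$ against each other with two-out-of-three. Your unit/counit arguments are correct; the counit diagram you write is essentially the one in the paper's proof of Theorem \ref{theo:QuillenEq} (the paper instead checks, via Hovey's criterion, that $L$ reflects weak equivalences and that $L(\Sing_{RV}X)\to X$ is a weak equivalence, but these are equivalent routes). The genuine gap is in the step you pass over most quickly: upgrading from Hurewicz cofibration to Cole cofibration. A Cole cofibration is a closed Hurewicz cofibration that \emph{factors} as a Quillen cofibration followed by a homotopy equivalence (equivalently, has the left lifting property against Hurewicz fibrations which are weak equivalences). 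Knowing that $\partial\nabla_n \to \nabla_n$ sits in a commutative square over $\partial\Delta_n \to \Delta_n$ with vertical homotopy equivalences $p$ does not produce such a factorization, so ``one obtains a Cole cofibration because $p$ provides a natural homotopy equivalence between this map and the Quillen cofibration'' is not a valid inference, and the same defect recurs for the horns. This is precisely where the paper's real work lies: Proposition \ref{prop:cofibnablacole} constructs the factorization explicitly by attaching a copy of $\Delta_n$ to $\partial\nabla_n$ along $\partial\sigma_n$ and using the homotopy $H_n$ of Proposition \ref{prop:commuteface}, whose compatibility with the face maps is exactly what allows it to be glued along the boundary; Proposition \ref{prop:cofibrationfaiblecornet} does the analogous, more delicate, construction for $V_n^r$, together with explicit proofs that $V_n^r \subset \nabla_n$ is a strong deformation retract and a closed cofibration.

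Your step can be repaired, but by an argument you neither state nor sketch: all spaces involved have CW homotopy type ($\nabla_n \simeq \Delta_n$, $\partial\nabla_n \simeq \partial\Delta_n$, $V_n^r \simeq |\Lambda_n^r|$ by Theorem \ref{theo:LwFw}), and a closed Hurewicz cofibration between spaces of CW homotopy type is a Cole cofibration: factor it as a relative cell complex followed by a trivial Serre fibration; the intermediate space is Cole-cofibrant (q-cofibrations are Cole cofibrations and the source is Cole-cofibrant), hence of CW homotopy type, so the second map is a weak equivalence between spaces of CW type and therefore a homotopy equivalence. If you argue this way you may indeed bypass the paper's explicit factorizations, but you must still actually prove that $\partial\nabla_n \to \nabla_n$ and $V_n^r \to \nabla_n$ are closed Hurewicz cofibrations. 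The first is Proposition \ref{prop:cofibnabla}, which the paper obtains not by an NDR construction but by lifting the classical geometric retraction through the homotopy lifting property of $p_n$ (Proposition \ref{prop:HLPplus}); your ``redistribute the small mass'' sketch is delicate precisely at ties of the minimal coordinate (where does the mass go continuously?), and the paper carries out such an explicit construction only for the horn case, where the distinguished index $r$ resolves the ambiguity. As written, the two load-bearing claims of your adjunction step are, respectively, only sketched and incorrectly justified.
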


{\bf Acknowledgements.} I thank S. Douteau
and D. Chataur
for useful discussions. I thank especially D. Chataur for his help in the proof of Theorem \ref{theointro:eqSingSingRV}.

\section{Preliminaries on measure algebras}
\label{sect:prelimmeasure}

In the paper, $\Omega$ is a atomless (complete) probability space.
It admits a \emph{measure algebra} $\mathfrak{M}$, defined (see \cite{FREMLIN}),
as the collection of all measurable sets modulo neglectable ones, with the operations of intersection $\cap$ and symmetric difference $\Delta$, together with the measure map $\la : \mathfrak{M} \to [0,1]$. It is naturally endowed with a metric $d(X,Y) =\la(X\Delta Y)$,
so that as a metric space it is naturally isomorphic to $\nabla_2$.
The atomless condition implies, thanks to Sierpinsky's theorem (\cite{SIERPINSKY}),
that there exists an \emph{exhaustion map} $t \mapsto \Omega_t$, which is a continuous
map $[0,1] \to \mathfrak{M}$ such that $t_1 \leq t_2 \Rightarrow \Omega_{t_1} \subset \Omega_{t_2}$
and $\la(\Omega_t) = t = t \la(\Omega)$. We fix this exhaustion map once and for all. When $\Omega$ is a \emph{standard} probability space,
one can identify $\Omega$ with $[0,1]$ endowed with the Lebesgue measure and set
$\Omega_t = [0,t]$.

The following useful technical results were proven in \cite{SRV}, under the unnecessary
assumption that $\Omega$ is standard. More generally, all the results of \cite{SRV} are true
without this assumption, with essentially the same proofs (with the exhaustion map replacing
the choices of intervals).
 The suspicious reader may however impose this additional
assumption that $\Omega$ is standard on the current paper as well. Hopefully the detailed proofs for the statements of \cite{SRV} in this more general setting will appear in \cite{SRVbook}.

The first useful map constructed in \cite{SRV} is the following one.
It is
a continuous map $\mathbf{g} : \mathfrak{M} \times [0,1] \to \mathfrak{M}$
such that
\begin{enumerate}
\item for every $A\in \mathfrak{M}$, $u \in [0,1]$, $\mathbf{g}(A,0) = A$, $\la(\mathbf{g}(A,u)) = \la(A)(1-u)$
\item for every $A\in \mathfrak{M}$, $0 \leq u \leq v \leq 1$, $\mathbf{g}(A,u) \supset \mathbf{g}(A,v)$
\item setting $\mathbf{\check{g}}(A,u) = \mathbf{g}(A,1-u)$, 
so that $\la(\mathbf{\check{g}}(A,u)) = u \la(A)$,
we have $\mathbf{\check{g}}(A,uv) = 
\mathbf{\check{g}}(\mathbf{\check{g}}(A,u),v)$ for every $A\in \mathfrak{M}$ and $u,v \in [0,1]$.
\item for every $u,v \in [0,1]$, $\mathbf{g}(\Omega_v,u) = \Omega_v \setminus \Omega_{uv}$
\item for all $E,F \in \mathfrak{M}$ and $u,v \in [0,1]$,
$$\la\left(\mathbf{g}(E,u) \Delta
\mathbf{g}(F,v) \right)\leq 4 \la(E \Delta F)  + |v-u|\max(\la(E),\la(F))
\leq 4 \la(E \Delta F)  + |v-u|$$
\end{enumerate}
This map is constructed in \cite{SRV}, Lemma 6. The additional statements we provide here
are proven in the course of the proof of the Lemma given there.

From this map, we can immediately build two other useful ones.
\begin{enumerate}
\item Setting $\mathbf{h}(A,u) = \, ^c \mathbf{g}( \, ^c A,u)$, one gets a continuous companion map
$\mathbf{h} : \mathfrak{M} \times [0,1] \to \mathfrak{M}$
such that $\mathbf{h}(A,0) = A$, $\mathbf{h}(A,1) = \Omega$, $\la(\mathbf{h}(A,u)) = u + (1-u) \la(A)$ and $\mathbf{h}(A,u) \subset \mathbf{h}(A,v)$ for all $A$ and $u \leq v$. Moreover it satisfies $\mathbf{h}(\Omega_t,u)
= \Omega_{u+(1-u) t}$ and
$$\la\left(\mathbf{h}(E,u) \Delta
\mathbf{h}(F,v) \right)\leq 4 \la(E \Delta F)  + |v-u|$$ for all $E,F \in \mathfrak{M}$ and $u,v \in [0,1]$.
\item The map $(t,A) \mapsto t A = \mathbf{\check{g}}(A,t)$ provides a topological
`retracting' action of the monoid $[0,1]$ (for the multiplication law) on $\mathfrak{M}$, that is
$t_1(t_2 A) = (t_1t_2) A$, with the property that $\la(t A) = t\la(A)$.
\end{enumerate}

A third, more elaborate map is constructed in \cite{SRV} from $\mathbf{g}$.
For $X$ a topological space, let us denote $\PA(X)$ the \emph{path space} made of continuous maps $[0,1] \to X$ endowed with the compact-open topology. Then, there is a continuous map
$$
\Phi : \PA([0,1]) \times \PA(\mathfrak{M}) \times \mathfrak{M} \to \PA(\mathfrak{M})
$$
mapping $(q,E_{\bullet},A)$ to $B_{\bullet}$, so that
\begin{itemize}
\item if $A \subset E_0$ and $q(0)\la(E_0)  = \la(A)$, then $B_0 = A$
\item for all $u \in [0,1]$, $B_u \subset E_u$ and $\la(B_u) = q(u) \la(E_u)$
\item if $q$ and $E_{\bullet}$ are constant maps, then so is $B_{\bullet}$
\end{itemize}
Informally this says that, when $E_{\bullet} \in \PA(\mathfrak{M})$ is a path inside $\mathfrak{M}$ with $A \subset E_0$, then we can find another path $B_{\bullet} \in \PA(\mathfrak{M})$
such that $B_u \subset E_u$ for every $u$, and the ratio $\la(B_{\bullet})/\la(E_{\bullet})$ follows any previously specified variation starting at $\la(A)/\la(E_0)$ -- and, moreover, that this can be done continuously.

The map is constructed as follows. We extend by constants the map $\check{\mathbf{g}} : \mathfrak{M} \times [0,1] \to \mathfrak{M}$ so that to define a continuous map $\mathfrak{M} \times \R \to \mathfrak{M}$.
We have $\check{\mathbf{g}}(A,t) = \check{\mathbf{g}}(A,1) = A$ for every $t \geq 1$,
and $\check{\mathbf{g}}(A,t) = \check{\mathbf{g}}(A,0) = \emptyset$ for every $t \leq 0$. Then,
setting $a(u) = q(u)\la(E_u)$, the image of $(q,E_{\bullet},A)$ is defined by the formula
$$
u\mapsto \check{\mathbf{g}}\left( A \cap E_u, \frac{a(u)}{\la(A \cap E_u)}\right) \cup
\check{\mathbf{g}}\left( E_u \setminus A, \frac{a(u)-\alpha(u)}{\la(E_u\setminus A)} \right)
$$
A detailed elementary proof that this map is indeed continuous can be found in \cite{SRV} (see Proposition 5 there).

Finally, we notice that the topological space $\nabla_n$ depends
only on the measure algebra $\mathfrak{M}$, as it can be defined
as 
$$\nabla_n = \{ \underline{A}=(A_k)_{k=0..n} \in \mathfrak{M}^{n+1} \ | \ 
i \neq j \Rightarrow A_i \cap A_j = 0 \ \& \ \sum_{k=0}^n \la(A_k) = 1  \}
$$
the correspondance with the description of $f \in \nabla_n$ as a map $f : \Omega \to [n]$
being given by $A_k = f^{-1}(k)$. 
Therefore, our construction $L(F)$ depends only on $\mathfrak{M}$, and not
on the probability space $\Omega$ itself. As a consequence, from
Maharam's theorem (see e.g. \cite{FREMLIN} ch. 33), we could assume w.l.o.g. that $\Omega$ is a countable
union of (renormalized) probability spaces of the form $\{ 0, 1\}^{\alpha}$
with the $\alpha$ infinite cardinals. We shall not need this fact, though, in
the course of our proofs.

Another remark is that all the constructions made here make sense if $\mathfrak{M}$ is replaced
by \emph{any} subalgebra of $\mathfrak{M}$ containing the subsets $\Omega_t, t \in [0,1]$ : typically, from
the construction in \cite{SRV}, we get immediately that the set $\mathbf{g}(A,u)$ belongs to the subalgebra generated by the $\Omega_t$ and $A$. As an example of such a subalgebra, in the case
where $\mathfrak{M} = \mathfrak{M}([0,1])$ is the measure algebra of the unit interval
and the exhaustion map is the map $t \mapsto [0,t]$,
the algebra $\mathfrak{M}$ could be replaced by any subalgebra containing the unions
of any finite number of open (or closed) intervals of $[0,1]$. Therefore, all the results
of the present paper remain valid if, in the above definition of $\nabla_n$, the algebra $\mathfrak{M}$ is replaced by any of these subalgebras.

\section{Simplicial Random Variables}
\label{sect:SRV}

The purpose of this section is to prove the following theorem.

\begin{theorem} \label{theo:LwFw}
The probability-law map $L(F) \to |F|$ is a homotopy equivalence.
\end{theorem}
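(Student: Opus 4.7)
The plan is to prove the theorem by induction on the skeletal filtration of $F$, combined with a naturally constructed section of $p$.

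First, I would build a natural transformation $s : \Delta \leadsto \nabla$ using the exhaustion map: set $s_n(x_0, \ldots, x_n)$ to be the random variable equal to $i$ on $\Omega_{X_i} \setminus \Omega_{X_{i-1}}$, where $X_i = x_0 + \cdots + x_i$ and $X_{-1}=0$. Continuity of $s_n$ follows from continuity of the exhaustion map and the sum-of-coordinates map. Naturality with respect to $\sigma \in \Hom_{\DDelta}([n],[m])$ uses crucially that $\sigma$ is weakly \emph{increasing}, hence $\sigma^{-1}(i)$ is a contiguous block $[j_1,j_2]$ of indices; a direct check then yields $\nabla(\sigma)\circ s_n = s_m \circ \Delta(\sigma)$, and $p\circ s = \mathrm{id}_\Delta$ is tautological. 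Passing to colimits gives a continuous section $s_F : |F| \to L(F)$ of $p_F$, so it suffices to exhibit a homotopy $\mathrm{id}_{L(F)} \simeq s_F \circ p_F$.

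I would produce such a homotopy by induction on $n$, constructing compatible homotopies $H^{(n)} : L(F^{(n)}) \times [0,1] \to L(F^{(n)})$ from $\mathrm{id}$ to $s_{F^{(n)}} \circ p_{F^{(n)}}$, where $F^{(n)}$ is the $n$-skeleton. The base $n=0$ is trivial since $\nabla_0$ is a point, so $L(F^{(0)}) = F_0 = |F^{(0)}|$ canonically. For the inductive step, $L(F^{(n)})$ is the pushout
$$
\xymatrix{
\bigsqcup_\alpha L(\partial\Delta[n]) \ar[r]\ar[d] & L(F^{(n-1)}) \ar[d]\\
\bigsqcup_\alpha \nabla_n \ar[r] & L(F^{(n)})
}
$$
indexed by the non-degenerate $n$-simplices $\alpha$ of $F$, so the problem reduces to extending each prescribed boundary homotopy on $L(\partial\Delta[n])$ across $\nabla_n$. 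Here the map $\Phi$ of Section 2 is the essential tool: for each coordinate block $f^{-1}(i)$, $\Phi$ produces a continuous deformation through $\mathfrak{M}$ with a prescribed mass profile. Using $\Phi$ to move the cumulative sets $B_k(t) = \bigcup_{i\leq k} f^{-1}(i)(t)$ from their initial configuration to the $\Omega_{X_k}$'s while keeping $\la(B_k(t))$ constantly equal to $X_k = \sum_{i\leq k}\la(f^{-1}(i))$, one obtains a homotopy of $f$ to $s_n(p_n(f))$ through elements of $\nabla_n$.

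The main obstacle is the compatibility requirement in this extension step: the homotopy produced on $\nabla_n$ via $\Phi$ must agree \emph{throughout} the parameter $t\in[0,1]$ with the prescribed homotopy on the subspace $L(\partial\Delta[n]) \subset \nabla_n$, not merely at $t=0$ and $t=1$. A clean resolution either proceeds via the homotopy extension property for the pair $(\nabla_n, L(\partial\Delta[n]))$ — which requires checking this inclusion is a closed cofibration, a non-trivial point in view of the non-local-compactness of $\nabla_n$ — or else constructs the extension by feeding the \emph{path} of boundary data into $\Phi$ and exploiting its continuity and its behavior on constant paths. Once this extension is in place and the $H^{(n)}$ are made compatible under $L(F^{(n-1)}) \hookrightarrow L(F^{(n)})$, the colimit topology on $L(F)=\bigcup_n L(F^{(n)})$ (and the tameness recorded in Proposition \ref{prop:LFtame}) lets them glue into a continuous homotopy on $L(F)$, exhibiting $s_F$ as a two-sided homotopy inverse of $p_F$.
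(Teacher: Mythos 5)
Your first step is fine: the map $s_n$ you describe is exactly the section $\sigma_n$ used in the paper (Section \ref{sect:presimpl}), and your naturality check (contiguity of $\sigma^{-1}(i)$ for weakly increasing $\sigma$) is correct, so a continuous section $s_F$ of $p_F$ does exist. The gap is in the inductive step, which is where the whole content of the theorem lies and which you flag but do not resolve. What you need on each attached cell is a homotopy $K:\nabla_n\times I\to L(F^{(n)})$ with \emph{three} constraints: $K_0=c_x$, $K_1=c_x\circ s_n\circ p_n$, and $K$ restricted to $\partial\nabla_n\times I$ equal to the previously built homotopy $H^{(n-1)}$ composed with the attaching map. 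Your first proposed fix, the homotopy extension property for $(\nabla_n,\partial\nabla_n)$ (which is indeed a closed cofibration, Proposition \ref{prop:cofibnabla}), only produces an extension of the boundary homotopy whose time-one map agrees with $s\circ p$ on $\partial\nabla_n$; it gives no control of the time-one map on the interior, and upgrading it to equal $c_x\circ s_n p_n$ \emph{rel boundary} is an extra, in general obstructed, problem that you do not address. Your second proposed fix, a cell-wise homotopy built from $\Phi$, runs into a structural obstruction: for the glued formula $c_x\circ H^{\mathrm{cell}}_n$ to match $H^{(n-1)}$ on the boundary, the cell homotopy must be compatible not only with face maps but also with degeneracy maps, because boundary faces $xD$ of a non-degenerate simplex can be degenerate, so the boundary values of $H^{(n-1)}$ are computed through a surjection $S$ on a lower-dimensional cell. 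The paper's explicit $\Phi$-based homotopy (Proposition \ref{prop:commuteface}) commutes with face maps only, and the Remark at the end of Section \ref{sect:presimpl} shows it genuinely fails for degeneracies; this is precisely why that explicit construction is used for the pre-simplicial Theorem \ref{theo:semiLwFFw} but not for Theorem \ref{theo:LwFw}.

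The paper's actual proof avoids constructing any section or explicit homotopy: it compares the attachment diagrams for $L(F^{(n)})$ and $|F^{(n)}|$ furnished by Proposition \ref{prop:LXattachement}, notes that the three vertical maps ($\nabla_n\to\Delta_n$, $\partial\nabla_n\to\partial\Delta_n$, $L(F^{(n-1)})\to|F^{(n-1)}|$) are homotopy equivalences (the first two from \cite{SRV}, the third by induction) and that the left-pointing arrows are closed cofibrations (Proposition \ref{prop:cofibnabla}), and then invokes the gluing theorem for homotopy equivalences of pushouts, followed by the ladder theorem to pass to the colimit. If you want to keep your explicit-section strategy, you must either produce a homotopy $\Id_{\nabla_n}\simeq s_n\circ p_n$ natural with respect to \emph{all} of $\DDelta$ (faces and degeneracies), which is not available from the tools you cite, or replace the cell-by-cell extension by the abstract gluing argument — at which point the section becomes unnecessary.
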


In order to prove it, we first need to prove that $L(F)$ has
similar structural properties as $|F|$, so we need to
browse the proofs describing the structure of $|F|$ as they
appear in standard textbooks and prove that they can be adapted to $L(F)$
(without, in particular, using the theory of CW-complexes).
We use \cite{FRPIC} for this purpose throughout.

We fix some $F \in \sSet$, and let $F_n^{\#} \subset F_n$ the collection of non
degenerate simplices, that is the ones not inside $F_{n-1}.\sigma$ for some $\sigma \in \DDelta$. Recall that $|F|$ is a quotient
of the subspace $\bigsqcup F_n^{\#}\times \Delta_n$ (\cite{FRPIC} cor. 4.3.2) and that it is
a CW-complex (\cite{FRPIC}, Theorem 4.3.5), 
the $n$-cells being given by the $\{ x \} \times \Delta_n \simeq \Delta_n \simeq B^n$ for $x \in F_n^{\#}$
with attaching maps $c_x : \Delta_n \to |F|$ mapping $u \in \Delta_n$ to the class of $(x,u)$.

For technical purposes, we need to introduce standard subcategories of $\DDelta$.
We let $\mathbf{M}$ (resp. $\mathbf{E}$) denote the subcategory of $\mathbf{\Delta}$
such that $\Hom_{\mathbf{M}}([n],[m])$ (resp. $\Hom_{\mathbf{E}}([n],[m])$) is the set of injective (resp. surjective) applications
inside $\Hom_{\DDelta}([n],[m])$. The elements of $\Hom_{\mathbf{M}}([n],[m])$
are called the face maps and the elements of $\Hom_{\mathbf{E}}([n],[m])$ are called
the degeneracy maps.

We first notice that the composition of the functor $\nabla : \DDelta \to \Top$
with the forgetful functor $\Top \to \Set$ provides a cosimplicial
set, which is immediately checked to have the Eilenberg-Zilber property.
Recall from e.g. \cite{FRPIC} Proposition 4.2.6 that this property means 
 $\forall x \in \nabla_0 \ \nabla(\delta_0)(x) \neq \nabla(\delta_1(x))$
 with $\{ \delta_0,\delta_1 \} = \Hom_{\DDelta}([0],[1])$,
and has for consequence that every element of $L(F)$ admits a unique
representative of the form $(\alpha,a)$ with $\alpha \in F_n^{\#}$
and $a$ an interior point of $\nabla_n$ (that is, a point not inside the image
of $\nabla(\sigma)$ for $\sigma : [m] \to [n]$, $m < n$). This representative
is called the minimal representative.

\subsection{The boundary and interior of $\nabla_n$}

Recall from \cite{SRV} that to every simplicial complex $\KK$
one associates the metric space $L(\KK)$, defined as a subspace
of $L^1(\Omega,S)$ for $S = \bigcup \KK$ the union of all the elements of $\KK$, that is its vertex set. This subspace is made of the
(up to neglectability, measurable) maps 
$f : \Omega \to \bigcup \KK$ such that $f(\Omega) \in \KK$,
where
$$
f(\Omega) = \{ s \in S \ | \ \la(f^{-1}(s)) > 0 \}
$$
is what is called the \emph{essential image} of $f$. In
this context, 
$\Delta_n = |\PF^*([n])|$ and 
$\nabla_n = L(\PF^*([n]))$, where $\PF^*(S)$ denotes the collection
of all nonempty finite subsets of the set $S$.

For any
simplicial complex $\KK$, let us consider the
set $\KK_{\mathrm{max}}$ of its maximal elements. Then it is easily checked that $\partial \KK = \KK \setminus \KK_{\mathrm{max}}$ is a simplicial complex as well.
We set $\partial \nabla_n = L(\partial \PF^*([n]))= L(\PF^*([n]) \setminus \{ [n] \})$
and 
$$\nabla_n^{\circ} = \nabla_n \setminus \partial \nabla_n
= \{ f \in L^1(\Omega,[n]) ; \ f(\Omega) \subsetneq [n]\}.$$ 
It is easily checked
that $\nabla_n^{\circ}$ is equal to the interior of $\nabla_n$ in the sense
of the cosimplicial set $\nabla$ as defined above.

We can now notice the following properties of $\nabla$, whose easy proofs are left to the reader. Part (3) can be proved either directly
or, applying $p : \nabla \leadsto \Delta$, immediately deduced from the classical case.

\begin{lemma} \label{lem:propsnabla} \ 
\begin{enumerate}
\item For every $S \in \mathbf{E}$, the map $\nabla(S)$ maps interior points to interior points.
\item For every $D \in \mathbf{M}$, the map $\nabla(D)$ is a closed map.
\item If $\sigma \in \Hom_{\DDelta}([m],[n])$ and $a \in \nabla_m^{\circ}$, then
$\nabla(\sigma).a$ determines $\sigma$.
\end{enumerate}
\end{lemma}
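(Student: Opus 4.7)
The plan is to translate all three statements into the equivalent description of $\nabla_n$ as the space of tuples $(A_0,\dots,A_n) \in \mathfrak{M}^{n+1}$ of pairwise disjoint elements with $\sum_k \la(A_k) = 1$, recalled at the end of Section \ref{sect:prelimmeasure}. Under this identification, interior points of $\nabla_m$ are precisely those tuples with all $\la(A_i) > 0$ (equivalently, those $f : \Omega \to [m]$ whose essential image is all of $[m]$), and the formula from the proof of Proposition \ref{prop:ptransfonat} rewrites the action of $\nabla(\sigma)$ as $(A_0,\dots,A_m) \mapsto (B_0,\dots,B_n)$ with $B_j = \bigsqcup_{i\in\sigma^{-1}(j)} A_i$.

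For (1), if $S \in \mathbf{E}$ is surjective and $(A_0,\dots,A_m)$ is interior, then for every $j$ the set $S^{-1}(j)$ is nonempty, so $B_j \supseteq A_i$ for some $i$ with $\la(A_i) > 0$; hence every $\la(B_j)$ is positive and the image is interior. For (2), injectivity of $D : [n] \to [m]$ makes $\nabla(D)$ an isometric embedding, since $D(x) \neq D(y) \iff x \neq y$ gives
$$d(\nabla(D) f, \nabla(D) g) = \int \un_{f(t) \neq g(t)} \dd\la(t) = d(f,g).$$
Its image is the set of tuples $(B_0,\dots,B_m)$ with $B_j = \emptyset$ for each $j \notin D([n])$, which is closed in $\nabla_m$ because each condition $\la(B_j) = 0$ is closed. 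Thus $\nabla(D)$ is a homeomorphism onto a closed subspace of $\nabla_m$, and therefore sends closed subsets of $\nabla_n$ to closed subsets of $\nabla_m$.

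For (3), I will apply the natural transformation $p$ of Proposition \ref{prop:ptransfonat}, as the paper suggests. If $\sigma_1, \sigma_2 \in \Hom_{\DDelta}([m],[n])$ satisfy $\nabla(\sigma_1)(a) = \nabla(\sigma_2)(a)$, naturality of $p$ gives $\Delta(\sigma_1)(p_m(a)) = \Delta(\sigma_2)(p_m(a))$. Since $a$ is an interior point, $p_m(a)$ has all coordinates strictly positive, hence is an interior point of $\Delta_m$, and the claim follows from the analogous classical fact for $\Delta$ (see e.g.\ \cite{FRPIC}). A direct proof is equally easy: representing $a$ by $f : \Omega \to [m]$ with $\la(f^{-1}(i)) > 0$ for every $i$, the a.e.\ equality $\sigma_1 \circ f = \sigma_2 \circ f$ forces $\sigma_1(i) = \sigma_2(i)$ on each positive-measure level set $f^{-1}(i)$, hence $\sigma_1 = \sigma_2$. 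I expect no real obstacle; the only subtlety, in (2), is that closedness of $\nabla(D)$ as a map does not follow directly from the Lipschitz bound of Proposition \ref{prop:ptransfonat}, but becomes trivial once one notices $\nabla(D)$ is in fact an isometry onto a closed subspace.
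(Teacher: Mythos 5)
Your proof is correct, and it fills in exactly the argument the paper leaves implicit: the paper gives no written proof of this lemma (the easy verifications are left to the reader, with the hint that part (3) follows either directly or via the natural transformation $p$ and the classical fact for $\Delta$), and your treatment of (3) follows precisely that suggested route, while (1) and (2) are the straightforward checks intended — including the useful observation that $\nabla(D)$ is an isometry onto the closed subset cut out by the conditions $\la(B_j)=0$ for $j\notin D([n])$. No gaps.
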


Together with the Eilenberg-Zilber property, part (1) of the lemma has the following easy consequence.
Let $(\beta,b) \in \bigsqcup F_n \times \nabla_n$ having $(\alpha,a)$
for minimal representative. We have $b = \nabla(D).a'$ for some $D \in \mathbf{M}$ and some interior
point $a'$ by the
Eilenberg-Zilber property, hence $(\beta,b) \sim (\beta.D,a')$; now, $\beta.D= \alpha'.S$ for some $S \in \mathbf{E}$
and $\alpha' \in F^{\#}$, so that $(\beta.D,a') = (\alpha'.S,a') \sim (\alpha',\nabla(S).a')$. But since $a'$
is an interior point so is $\nabla(S).a'$ and $(\alpha',\nabla(S).a')$ is the unique minimal representative,
which proves $\alpha' = \alpha$, $a = \nabla(S).a'$. In particular we have $D \in \mathbf{M}$ and $S \in \mathbf{E}$
such that $\beta .D = \alpha.S$.

 By \cite{SRV} we know that the probability-law map
provides a Hurewicz fibration $\partial \nabla_n = L(\partial \PF^*([n])) \to |\partial \PF^*([n])| = \partial \Delta_n$
with homotopically trivial fiber (consider the preimage of a vertex of $\partial \Delta_n$), which is an homotopy equivalence. Since $\Delta_n$ is homeomorphic to a $n$-sphere, we get
that $\partial \nabla_n$ has the (strong) homotopy type of a $n$-sphere. Moreover, $\partial \nabla_n$ is equal to the preimage of $\partial \Delta_n$ under the probability-law map.

\subsection{The cofibration $\partial \nabla_n \to  \nabla_n$.}

The purpose of this section is to prove the following.
\begin{proposition} \label{prop:cofibnabla} The inclusion map $\partial \nabla_n \to  \nabla_n$ is a closed cofibration.
\end{proposition}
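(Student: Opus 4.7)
The plan is to exhibit $(\nabla_n, \partial \nabla_n)$ as an NDR pair in the sense of Str\o m, from which the closed cofibration property (and the closedness of $\partial \nabla_n$) will follow. Concretely, I need to produce: (i) a continuous function $u : \nabla_n \to [0,1]$ with $u^{-1}(0) = \partial \nabla_n$, and (ii) a continuous homotopy $H : \nabla_n \times [0,1] \to \nabla_n$ with $H(\cdot, 0) = \mathrm{id}$, $H(\underline{A}, t) = \underline{A}$ for every $\underline{A} \in \partial \nabla_n$ and every $t$, and $H(\underline{A}, 1) \in \partial \nabla_n$ whenever $u(\underline{A}) < 1$.

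The function $u(\underline{A}) = (n+1)\min_i \la(A_i)$ handles (i). Each coordinate map $\underline{A} \mapsto \la(A_i)$ is $1$-Lipschitz from $\nabla_n$ to $[0,1]$ (since $|\la(A_i) - \la(A'_i)| \leqslant \la(A_i \Delta A'_i) \leqslant d(\underline{A}, \underline{A}')$), so $u$ is continuous; the normalization $\sum_i \la(A_i) = 1$ forces $\min_i \la(A_i) \leqslant 1/(n+1)$, so $u \leqslant 1$; and $u(\underline{A}) = 0$ holds exactly when some $\la(A_i) = 0$, which is the definition of $\partial \nabla_n$.

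For (ii), the idea is to lift to $\nabla_n$ a classical NDR homotopy $H_\Delta : \Delta_n \times [0,1] \to \Delta_n$ for the pair $(\Delta_n, \partial \Delta_n)$, chosen so that $H_\Delta(\cdot, 0) = \mathrm{id}$, $H_\Delta$ is stationary in $t$ on $\partial \Delta_n$ and at the barycentre of $\Delta_n$, and sends $\{u_\Delta < 1\} \times \{1\}$ into $\partial \Delta_n$. For a given $\underline{A} \in \nabla_n$, I construct inductively a disjoint family of paths $B_0(\cdot), \dots, B_n(\cdot) \in \PA(\mathfrak{M})$ with $B_i(0) = A_i$ and $\la(B_i(t)) = H_\Delta(p_n(\underline{A}), t)_i$, by using the map $\Phi$ from Section~\ref{sect:prelimmeasure}: the path $B_0$ is obtained by applying $\Phi$ to the constant path $E_\bullet \equiv \Omega$, starting subset $A = A_0$, and ratio path $q_0(t) = H_\Delta(p_n(\underline{A}), t)_0$; then $B_1$ is obtained by applying $\Phi$ to $E_\bullet = \Omega \setminus B_0(\cdot)$, starting subset $A_1$, and the appropriately rescaled ratio; and so on, with $B_n(\cdot)$ forced by complementation. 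Setting $H(\underline{A}, t) = (B_0(t), \dots, B_n(t))$ produces the desired homotopy.

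The verifications of the NDR axioms are then straightforward from the specification of $\Phi$: the initial condition of $\Phi$ gives $H(\underline{A}, 0) = \underline{A}$; when $\underline{A} \in \partial \nabla_n$ the path $H_\Delta(p_n(\underline{A}), \cdot)$ is constant, hence each $q_i$ and each $E_\bullet^{(i)}$ is constant, so the third bullet in the properties of $\Phi$ forces $B_i(\cdot)$ to be constant equal to $A_i$ and $H$ is stationary on $\partial \nabla_n$; finally when $u(\underline{A}) < 1$, one has $H_\Delta(p_n(\underline{A}), 1) \in \partial \Delta_n$, whence $p_n(H(\underline{A}, 1)) \in \partial \Delta_n$ and $H(\underline{A}, 1) \in \partial \nabla_n$. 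The principal obstacle is the joint continuity of $H$ in the pair $(\underline{A}, t)$: the iterative use of $\Phi$ implicitly divides by $\la(E_\bullet^{(i)})$, which degenerates exactly at configurations concentrated on a single index; one has to check that the resulting expressions extend continuously through these apparent singularities, which reduces to the behaviour of $\check{\mathbf{g}}$ on the empty set (built into the extension-by-constants used in the construction of $\Phi$) together with the continuity of $\Phi$ itself, both of which are available from the preliminary section.
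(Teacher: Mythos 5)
There is a genuine gap, and it is located exactly where your two pieces of data are supposed to fit together: your function $u$ and your required property of $H$ (equivalently of $H_\Delta$) cannot coexist. With $u(\underline{A})=(n+1)\min_i\la(A_i)$, the set $\{u=1\}$ is the fibre $p_n^{-1}(b)$ over the barycentre $b$ of $\Delta_n$, which has empty interior in $\nabla_n$, so $\{u<1\}$ is dense. Since $\partial\nabla_n$ is closed and $H(\cdot,1)$ is continuous, the NDR requirement ``$H(\underline{A},1)\in\partial\nabla_n$ whenever $u(\underline{A})<1$'', together with stationarity of $H$ on $\partial\nabla_n$, would force $H(\cdot,1)$ to be a retraction of $\nabla_n$ onto $\partial\nabla_n$. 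No such retraction exists: $\nabla_n$ is contractible (for instance $\sigma_n\circ p_n\simeq\Id_{\nabla_n}$ by Proposition \ref{prop:commuteface} and $\Delta_n$ is contractible), while $\partial\nabla_n$ has the homotopy type of a sphere, and a retract of a contractible space is contractible. The same obstruction already kills the downstairs input you want to lift: a continuous $H_\Delta$ which is stationary on $\partial\Delta_n$ and sends every point except the barycentre into $\partial\Delta_n$ at time $1$ would, at time $1$, retract $\Delta_n$ onto $\partial\Delta_n$. Genuine NDR data for $(\Delta_n,\partial\Delta_n)$ (or $(D^n,S^{n-1})$) necessarily has $u_\Delta\equiv 1$ on a whole neighbourhood of the barycentre, with the time-$1$ map collapsing only a collar onto the boundary. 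So to salvage your scheme you must at least replace $u$ by a truncated version such as $\min\bigl(1,\,2(n+1)\min_i\la(A_i)\bigr)$ and $H_\Delta$ by a collar-type homotopy; as written, items (i) and (ii) are contradictory and the final verification (``$u(\underline{A})<1$ implies $H_\Delta(p_n(\underline{A}),1)\in\partial\Delta_n$'') is vacuous because no admissible $H_\Delta$ has that property.

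Even after that repair, the continuity issue you defer is not a routine check: the rescaled ratio $q_k(t)$ divides by $1-\sum_{j<k}H_\Delta(p_n(\underline{A}),t)_j$, and at configurations (or times) where this vanishes, $q_k$ need not even extend to a continuous path $[0,1]\to[0,1]$, so $\Phi$ cannot be applied; making the iterated lift well defined and jointly continuous in $(\underline{A},t)$ is essentially the whole analytic content of the statement. The paper avoids both difficulties by a different mechanism: it takes the explicit classical retract of $\Delta_n\times I$ onto $(\Delta_n\times\{0\})\cup(\partial\Delta_n\times I)$ (no NDR function is needed for the cofibration criterion used there) and lifts its $\Delta_n$-component through $p_n$ in one stroke, using the homotopy lifting property of Proposition \ref{prop:HLPplus}, whose clause that constant paths lift to constant paths yields stationarity on $\partial\nabla_n$ for free. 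If you want to keep an explicit hand-made homotopy built from $\Phi$, you should model it on that lifting argument rather than on an NDR presentation with the barycentre as the only non-collapsed locus.
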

As $\partial \nabla_n$ is a closed subset of $\nabla_n$, in order to prove the
proposition we need (see e.g. \cite{FRPIC}, Proposition A.4.1 p.250) to construct a retract $\nabla_n \times I \to (\nabla_n \times \{ 0 \})\cup ((\partial \nabla_n) \times I)$ of the reverse natural inclusion,
with $I = [0,1]$.

We first follow the classical geometric receipe (\cite{FRPIC}, p. 7-8) for proving that
the inclusion of the $n$-sphere inside the $(n+1)$-ball is a closed fibration, except that we do it
on the $n$-simplex (see Figure \ref{fig:cofibDelta}). For
this we construct
a retract $\Delta_n \times I \to (\Delta_n \times \{ 0 \})\cup ((\partial \nabla_n) \times I)$ of the natural inclusion in the other direction.
 The elements of $\Delta_n \times I $ are
the $(\underline{u};a) \in \Delta_n \times I$ for $\underline{u} = (u_0,\dots,u_n)$
with $u_i \geq 0$ and $\sum u_i = 1$. The line from $(\underline{v};2)$  to $(\underline{u};a)$
with $\underline{v}=(v_i)_{i=0..n}$ and $v_i= \frac{1}{n+1}$
 crosses $(\Delta_n \times \{ 0 \})\cup ((\partial \Delta_n) \times I)$
 at exactly one point. The corresponding (continuous) map from $\Delta_n \times I $ is explicitely given by the following formulas
 $$
\begin{array}{lcll}
(\underline{u};a) &\mapsto & ( \frac{1}{2-a} (2u_i-  \frac{a}{n+1})_{i=0,\dots,n};0) & \mbox{ if } a \leq 2(n+1) m(\underline{u})\\
& & (  (\frac{u_i - m(\underline{u})}{1-(n+1)m(\underline{u})})_{i=0,\dots,n}; \frac{a - 2(n+1)m(\underline{u})}{1 - (n+1)m(\underline{u})} )&
\mbox{ if } a \geq 2(n+1) m(\underline{u})
\end{array}
$$
where $m(\underline{u}) = \min(u_0,u_1,\dots,u_n)$.

\begin{figure}

\begin{center}

\begin{tikzpicture}[scale=2]
\draw (0,0) -- (1,0); 
\draw[dashed] (0,0) -- (0.5,0.4330127018922193) -- (1,0);
\draw (0,1) -- (1,1) -- (0.5,1.4330127018922193) -- cycle;
\draw (0,0) -- (0,1);
\draw (1,0) -- (1,1);
\draw[dashed] (0.5,1.4330127018922193) -- (0.5,0.4330127018922193);
\draw (0.5,2.28867513459481287) node {$\bullet$};
\draw[red] (0.5,2.28867513459481287) -- (0.338,1.1);
\draw[red,dashed] (0.338,1.1) -- (0.2,0.1);
\draw[red] (0.2,0.1) node {$\bullet$};
\draw[red] (0.3,0.8) node {$\bullet$};
\end{tikzpicture}

\end{center}
\caption{The cofibration $\partial \Delta_n \into \Delta_n$}
\label{fig:cofibDelta}
\end{figure}

We now want to lift the map $\Delta_n \times I \to (\Delta_n \times \{ 0 \})\cup ((\partial \Delta_n) \times I)$
to a map $\nabla_n \times I \to (\nabla_n \times \{ 0 \})\cup ((\partial \nabla_n) \times I)$.
For this we use the following result from \cite{SRV} (Proposition 4.4 and Remark 4.5).

\begin{proposition}\label{prop:HLPplus} Let $X$ be a topological space. Then the probability-law map $p_n : \nabla_n \to \Delta_n$
has the homotopy lifting property w.r.t. $X$, that is, for any (continuous) maps $H : X \times [0,1] \to \Delta_n$,
$h : X \to \nabla_n$ such that $p_n \circ h = H(\bullet,0)$, there exists a map $\tilde{H} : X \times [0,1] \to \nabla_n$ such that $p_n \circ \tilde{H} = H$ and $\tilde{H}(\bullet,0) = h$. Moreover, for any $x \in X$ such that
$H(x,\bullet)$ is constant, then so is $\tilde{H}(x,\bullet)$.
\end{proposition}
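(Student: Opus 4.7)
The plan is to build $\tilde{H}$ component-by-component, at each step using the map $\Phi$ of Section \ref{sect:prelimmeasure} to lift a prescribed variation of measure to an actual variation of measurable sets, while maintaining disjointness by forcing each new component to live inside the complement of the previously constructed ones. Writing $h(x) = (A_0(x),\dots,A_n(x)) \in \mathfrak{M}^{n+1}$ and $H(x,u) = (H_0(x,u),\dots,H_n(x,u))$, the goal is to produce disjoint sets $B_0(x,u),\dots,B_n(x,u)$ varying continuously in $(x,u)$ with $B_k(x,0) = A_k(x)$ and $\la(B_k(x,u)) = H_k(x,u)$; then $\tilde{H}(x,u) := (B_0(x,u),\dots,B_n(x,u))$ lies in $\nabla_n$ and provides the desired lift.

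The construction proceeds by induction on $k$. Set $E^{(0)}_\bullet(x)$ to be the constant path at $\Omega$, and for $k \geq 1$ set $E^{(k)}_\bullet(x) = \Omega \setminus (B_0(x,\bullet) \cup \cdots \cup B_{k-1}(x,\bullet))$, a continuous path in $\mathfrak{M}$ of measure $\sum_{j \geq k} H_j(x,\bullet)$. For $k < n$, define $B_k(x,\bullet)$ as the image under $\Phi$ of
$$
\bigl(\,q_k(x,\bullet),\ E^{(k)}_\bullet(x),\ A_k(x)\,\bigr),
$$
where $q_k(x,u) = H_k(x,u)/\la(E^{(k)}_u(x))$ whenever the denominator is positive, extended by $0$ otherwise. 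The hypothesis $p_n \circ h = H(\bullet,0)$ gives $q_k(x,0)\la(E^{(k)}_0(x)) = H_k(x,0) = \la(A_k(x))$, and the disjointness of the $A_j(x)$ yields $A_k(x) \subset E^{(k)}_0(x)$, so the hypotheses of $\Phi$ are met. The defining properties of $\Phi$ then produce $B_k(x,0) = A_k(x)$, $B_k(x,u) \subset E^{(k)}_u(x)$ and $\la(B_k(x,u)) = q_k(x,u)\la(E^{(k)}_u(x)) = H_k(x,u)$. At $k = n$, set $B_n(x,u) = E^{(n)}_u(x)$; its measure equals $H_n(x,u)$ since $\sum_k H_k = 1$.

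The resulting $\tilde{H}$ satisfies $\tilde{H}(\bullet,0) = h$ and $p_n \circ \tilde{H} = H$ by direct inspection. Continuity in $(x,u)$ follows from continuity of $\Phi$ together with continuity of the inputs as functions of $x$, after viewing the path-valued inputs via the exponential adjunction, which is available because $[0,1]$ is locally compact. The ``moreover'' clause uses property (3) of $\Phi$: if $H(x,\bullet)$ is constant, then so are the $H_k(x,\bullet)$, the $E^{(k)}_\bullet(x)$ and the $q_k(x,\bullet)$, so by induction each $B_k(x,\bullet)$ is constant, and hence so is $\tilde{H}(x,\bullet)$.

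The main obstacle I expect is the care required at points where $\la(E^{(k)}_u(x)) = 0$, so that $q_k(x,u)$ is formally $0/0$. The subtlety is handled by the explicit formula for $\Phi$ in Section \ref{sect:prelimmeasure}, which never reads $q_k$ alone but only through the product $a(u) = q_k(x,u)\la(E^{(k)}_u(x)) = H_k(x,u)$, continuous in $(x,u)$ and vanishing together with $\la(E^{(k)}_u(x))$. The extension of $\check{\mathbf{g}}$ by constants to $\mathfrak{M} \times \R$ then makes both terms of the formula collapse to the empty set in the degenerate regime, without disturbing the continuity that $\Phi$ is known (by \cite{SRV}) to possess.
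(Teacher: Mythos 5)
The paper contains no internal proof of this proposition: it is imported verbatim from \cite{SRV} (Proposition 4.4 and Remark 4.5), so there is nothing in the text to compare with line by line. Your reconstruction is the natural one and is sound in outline: peeling off one coordinate at a time with $\Phi$, forcing $B_k(x,\bullet)\subset E^{(k)}_\bullet(x)=\Omega\setminus(B_0\cup\dots\cup B_{k-1})$, so that disjointness, the measure condition $\la(B_k(x,u))=H_k(x,u)$, the initial condition $B_k(x,0)=A_k(x)$ and the constancy clause all drop out of the listed properties of $\Phi$. This is exactly the kind of argument $\Phi$ was built for, and it mirrors how the paper itself uses $\Phi$ (the sets $E^k_u$ of Section \ref{sect:homotophighern}); the degenerate case $\la(E^{(k)}_u(x))=0$ causes no problem for the algebraic identities, since then $H_k(x,u)\leq\la(E^{(k)}_u(x))=0$ and $B_k$ is forced to be null.

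The step you should tighten is the continuity claim, and you have correctly located where the tension is. As written, ``continuity follows from continuity of $\Phi$ together with continuity of the inputs'' is not literally available: $q_k(x,u)=H_k(x,u)/\la(E^{(k)}_u(x))$, extended by $0$, can be discontinuous, and $q_k(x,\cdot)$ need not even lie in $\PA([0,1])$ when $\sum_{j\geq k}H_j(x,\cdot)$ vanishes somewhere, so the stated continuity of $\Phi$ (a map on $\PA([0,1])\times\PA(\mathfrak{M})\times\mathfrak{M}$) cannot be invoked as a black box. Your fallback --- define $B_k$ by the explicit formula, which reads $q$ only through $a(u)=q(u)\la(E_u)=H_k(x,u)$ --- is the right fix, but note that it amounts to re-running the estimates behind Proposition 5 of \cite{SRV} with $a$ (subject to $0\leq a\leq\la(E_\bullet)$) as the input variable, rather than citing the statement of that proposition. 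Two observations make this painless and should be said explicitly: first, at any point where $\la(E^{(k)}_u(x))=0$ continuity is automatic, because $B_k(x',u')\subset E^{(k)}_{u'}(x')$ and $\la(E^{(k)}_{u'}(x'))=\sum_{j\geq k}H_j(x',u')\to 0$, so $B_k(x',u')\to\emptyset$ in $\mathfrak{M}$; second, the formula never divides by $\la(E_u)$, and its only delicate denominators, $\la(A\cap E_u)$ and $\la(E_u\setminus A)$, are controlled by the estimate $\la\left(\mathbf{g}(E,u)\Delta\mathbf{g}(F,v)\right)\leq 4\la(E\Delta F)+|v-u|\max(\la(E),\la(F))$, exactly as in \cite{SRV}. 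With that paragraph spelled out, your proof is complete, and it is essentially the argument the paper imports from \cite{SRV}.
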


We then start from the map
$f : \Delta_n \times [0,1] \to (\Delta_n \times \{ 0 \})\cup (\partial \Delta_n \times I) \subset \Delta_n \times I$
constructed above and we consider the projection map $p_1 : \Delta_n \times I \to \Delta_n$
as well as the composed map $p_1 \circ f = f^1 : \Delta_n \times I \to \Delta_n$. Let us
consider the probability-law map $p_n : \nabla_n \to \Delta_n$ and set
$H= f^1 \circ (p_n \times \Id) : \nabla_n \times I \to \Delta_n$.
We have $H(x,t) = p_1(f(p_n(x),t))$, and $H(x,0) = p_1(f(p_n(x),0)) = p_n(x) = p_n(h(x))$ for
$h = \Id_{\nabla_n}$. Applying Proposition \ref{prop:HLPplus} with $X = \nabla_n$, we get
$\tilde{H} : \nabla_n \times [0,1] \to \nabla_n$ such that $p_n \circ \tilde{H} = H$ and $\tilde{H}(\bullet,0) = h = \Id_{\nabla_n}$. Moreover, for any $x \in \partial \nabla_n$, since $f^1(p_n(x)) = p_n(x)$ we get
that $H(x,\bullet) = f^1(p_n(x),\bullet)$ is constant, since $f^1(y,t) = y$ for all $y \in \partial \Delta_n$.
This yields $\tilde{H}(x,t) = \tilde{H}(x,0) = h(x) = x$ for all $x \in \partial \nabla_n$, $t \in I$.
Let us now consider $\varphi = \pi_2 \circ f : \Delta_n \times [0,1] \to I$ where $\pi_2$ is the second projection
and set $\Psi(x,t) = (\tilde{H}(x,t), \varphi(x,t))$. This defines a continuous map $\Psi : \nabla_n \times I \to \nabla_n \times I$ such that $p_n \times \Id \circ \Psi$ coincides with $f$. As a consequence it takes
values inside 
$$
(\nabla_n \times \{ 0 \})\cup ((\partial \nabla_n) \times I) = (p_n \times \Id)^{-1}\left(
 (\Delta_n \times \{ 0 \})\cup ((\partial \Delta_n) \times I) \right)
$$
and it makes the following diagram commute, where the vertical maps are restrictions of $p_n \times \Id$.
$$
\xymatrix{
\nabla_n \times I \ar[r]\ar[d] & (\nabla_n \times \{ 0 \})\cup ((\partial \nabla_n) \times I)\ar[d]\\
\Delta_n \times I \ar[r] & (\Delta_n \times \{ 0 \})\cup ((\partial \Delta_n) \times I)
}
$$
It remains to prove that $\Psi$ is the identity both on 
$\nabla_n \times \{ 0 \}$, which is clear because
$\Psi(x,0) = (\tilde{H}(x,0),\varphi(x,0)) = (h(x),0) = (x,0)$,
and on $(\partial \nabla_n) \times I$, which holds true because
$\varphi(x,t) = t$ and $\tilde{H}(x,t) = \tilde{H}(x,0)=x$ 
whenever $x \in \partial \nabla_n$. This concludes the proof of Proposition \ref{prop:cofibnabla}.

\subsection{Preliminaries on attachments}

In the remaining part of this section we adopt the point of view of a simplicial
set $G$ as a graded set $\bigsqcup_n G_n$ endowed with a right action of the category
$\DDelta$. A simplicial subset of $G$ is a simplicial set $D = \bigsqcup_n D_n$
with $D_n \subset G_n$ such that the inclusion $D \subset F$ is a simplicial map.

We briefly recall the definition of a simplicial attachment (see  \cite{FRPIC} p. 144).
Let $A$ and $G$ be two simplicial sets, and $D$ a simplicial subset of $G$. That is,
$D$ is a simplicial set $\bigsqcup_n D_n$ with $D_n \subset G_n$ and the inclusion maps $D_n \to G_n$ commute
with the face and degeneracy maps. Moreover, let $f : D \to A$ be a simplicial map.
 In order to avoid confusions, we temporarily denote $x\star \rho$ the action of $\rho \in \DDelta$ on $x \in G_n$. From this the simplicial attachement
$F$ is such that $F_n = A_n \sqcup (G_n \setminus D_n)$ and, for any $\rho \in \Hom_{\DDelta}([n],[m])$ and $x \in F_n$, we define $\rho.x$ from the action of $\DDelta$ on $A$ if $x \in A_n$, from the action $\star$ on $G$ if $x \in G_n \setminus D_n$ and $x.\rho \not\in D_m$, and
finally as $f_m(x\star\rho)$ if  $x.\rho \in G_n \setminus D_n$ and $x\star \rho \in D_m$. Checking that this construction is well-defined is straightforward.

Now, we recall from e.g. \cite{FRPIC} Corollary 4.2.4 that the $n$-skeleton $F^n$ of a simplicial set
$F$ is obtained from its $(n-1)$-skeleton by attaching the simplicial set
$\bigsqcup_{x \in F_n^{\#}}\Delta_x$, where $\Delta_x$ is a copy of the simplicial set $\Delta_n$, via the simplicial map $\bigsqcup_{x \in F_n^{\#}}\varphi_x$ with $\varphi_x : \partial \Delta_x \to F^{n-1}$ given by $\varphi_x(\alpha) = x \alpha$ where the simplicial set $\partial \Delta_n$ is by definition the $(n-1)$-skeleton
of $\Delta_n$.

\subsection{Properties of the functor $L : \sSet \to \Top $}

Let $F,G$ two simplicial sets, and $f : F \to G$ a simplicial map. This means
that $f$ is a collection of maps $f_n : F_n \to G_n$ commuting 
with the right action of the category $\DDelta$, in the sense that, for
every $\sigma \in \Hom_{\DDelta}([n],[m])$ and $\alpha \in F_n$, we
have $f_m(\alpha.\sigma) = f_n(\alpha).\sigma$.
It induces continuous maps $\hat{f}_n =  f_n \times \Id_{\nabla_n} : F_n \times \nabla_n \to G_n \times \nabla_n$.  For $\sigma \in \Hom_{\DDelta}([n],[m])$
and $(\alpha,a) \in F_n \times \nabla_n$ we have
$$
\hat{f}_m(\alpha.\sigma,a) = (f_m(\alpha.\sigma),a)
= (f_n(\alpha).\sigma,a) \sim (f_n(\alpha),\nabla(\sigma)(a)) = \hat{f}_n(\alpha,\nabla(\sigma)(a))
$$
hence $\bigsqcup_n \hat{f}_n$ induces a continuous map $L(f) : L(F) \to L(G)$,
and clearly $L(f \circ g) = L(f) \circ L(g)$, $L(\Id_F) = \Id_{L(F)}$. Therefore $L$
defines a functor $L : \sSet \to \Top$.

The composite of $\nabla$ with the forgetful functor $V : \Top \to \Set$
is a cosimplicial set, and clearly $V \circ L (F) = F \otimes V \circ \nabla$ with the notations
of e.g. \cite{FRPIC}. Moreover, it is immediately checked that $V \circ \nabla$ has the
Eilenberg-Zilber property, and therefore $V \circ L$ preserves and reflects monomorphisms
(\cite{FRPIC}, corollary 4.2.9). In particular, if $D$ is a simplicial subset of $G$,
then the induced map $L(D) \to L(G)$ is injective.

As in the classical case, we have the following property.

\begin{lemma} \label{lem:LGLFclosed}
If $G$ is a simplicial subset of the simplicial set $F$, then the natural
map $L(G) \to L(F)$ embeds $L(G)$ as a closed subset of $L(F)$.
\end{lemma}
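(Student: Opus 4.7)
The plan is to prove that the continuous injection $\iota : L(G) \to L(F)$ (injectivity already recorded above) is a \emph{closed} map, which simultaneously shows that $\iota(L(G))$ is closed in $L(F)$ (apply it to $C = L(G)$) and that $\iota$ is a homeomorphism onto its image. First I would record the combinatorial fact that $G_n^{\#} = G_n \cap F_n^{\#}$: if $\alpha \in G_n$ satisfies $\alpha = \alpha'.S$ with $\alpha' \in F_m^{\#}$ and $S \in \mathbf{E}$, then choosing a section $\tau$ of $S$ gives $\alpha' = \alpha.\tau \in G$ because $G$ is stable under the $\DDelta$-action. Consequently, the minimal representative of any $(\beta, b) \in \bigsqcup_n G_n \times \nabla_n$ already lies in $\bigsqcup_m G_m^{\#} \times \nabla_m^{\circ}$, the equivalences $\sim_F$ and $\sim_G$ coincide on $\bigsqcup_n G_n \times \nabla_n$, and the image $\iota(L(G))$ equals the set of $\sim_F$-classes whose minimal representative has first coordinate in $G$.

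Now fix a closed subset $C \subseteq L(G)$; to prove $\iota(C)$ is closed in $L(F)$ it suffices, by the quotient topology, that for every $\beta \in F_n$ the slice $S_\beta = \{b \in \nabla_n : q_F(\beta,b) \in \iota(C)\}$ is closed in $\nabla_n$. Given $b \in \nabla_n$, use the unique interior decomposition $b = \nabla(D).a'$ with $D \in \Hom_{\mathbf{M}}([m],[n])$ and $a' \in \nabla_m^{\circ}$ from Lemma~\ref{lem:propsnabla}(3): then $(\beta,b) \sim_F (\beta.D, a')$, so $q_F(\beta,b)$ belongs to $\iota(C)$ if and only if $\beta.D \in G$ and $(\beta.D, a') \in q_G^{-1}(C)$. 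Setting $T_{\beta,D} = \{t \in \nabla_m : (\beta.D, t) \in q_G^{-1}(C)\}$, which is closed in $\nabla_m$ because $q_G^{-1}(C)$ is closed and $\{\beta.D\}\times \nabla_m$ is clopen in $\bigsqcup_n G_n \times \nabla_n$, I would establish the identity
\[
S_\beta \ = \ \bigcup_{\substack{D \in \Hom_{\mathbf{M}}([m],[n]) \\ \beta.D \in G}} \nabla(D)(T_{\beta,D}).
\]
The inclusion $\subseteq$ is immediate from the description above. For $\supseteq$, suppose $b = \nabla(D).t$ with $t \in T_{\beta,D}$, and let $t = \nabla(D').t'$ be the interior decomposition of $t$; then $b = \nabla(DD').t'$, so the interior datum of $b$ is $(DD', t')$. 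Moreover $\beta.DD' = (\beta.D).D' \in G$ by stability of $G$ under face maps, and the $\sim_G$-equivalence $(\beta.D, t) \sim_G (\beta.DD', t')$ transports $q_G^{-1}(C)$-membership from the former to the latter, so that $b \in S_\beta$.

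Since $\Hom_{\mathbf{M}}([m],[n])$ is finite (and empty for $m > n$) the union is finite, and each $\nabla(D)(T_{\beta,D})$ is closed in $\nabla_n$ because $\nabla(D)$ is a closed map by Lemma~\ref{lem:propsnabla}(2). Hence $S_\beta$ is closed, $q_F^{-1}(\iota(C))$ is closed, and $\iota(C)$ is closed in $L(F)$, yielding both conclusions. The main subtlety is the formula for $S_\beta$: because pairs $(\beta,b)$ with $\beta \notin G$ can still represent points of $\iota(L(G))$ whenever $b$ lies on a face of $\nabla_n$ that $\beta$ sends into $G$, the slice is governed not by the naive condition $\beta \in G$ but by the combinatorics of which faces of $\beta$ land in $G$; it is the closedness of each $\nabla(D)$ from Lemma~\ref{lem:propsnabla}(2), combined with the finiteness of the set of face maps into $[n]$, that turns this combinatorial description into a genuine closedness statement in $\nabla_n$.
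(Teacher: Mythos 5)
Your proof is correct and follows essentially the same route as the paper: both reduce to showing each slice over a simplex $\beta \in F_n$ is closed, and both establish the same key identity expressing that slice as a finite union of $\nabla(D)$-images (over face maps $D$ with $\beta.D \in G$) of closed slices coming from $q_G^{-1}(C)$, concluding via Lemma \ref{lem:propsnabla}(2) and finiteness. Your additional verifications (the section argument for $G_n^{\#} = G_n \cap F_n^{\#}$ and the agreement of $\sim_F$ and $\sim_G$ on the $G$-part) simply make explicit what the paper treats as clear via minimal representatives.
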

\begin{proof}

Let $\bar{y} \in L(G)$ and $\bar{x}$ its image in $L(F)$. There exists unique minimal representatives
of $\bar{x}$ and $\bar{y}$ inside $\bigsqcup_n F_n \times \nabla_n$ and $\bigsqcup_n G_n \times \nabla_n$,
respectively. Since $G^{\#} \subset F^{\#}$ they are the same, and this implies that $\bar{x}$ determines
$\bar{y}$, whence $L(G) \subset L(F)$.

Let now $C$ be a closed subset of $L(G)$, $q : \bigsqcup F_n \times \nabla_n \to L(F)$ the natural
projection map, and $C_{\alpha} = q^{-1}(C) \cap \{ \alpha \} \times \nabla_n$ for each $\alpha \in F_n$.
We need to prove that each $C_{\alpha}$ is closed. This is clear when $\alpha \in G$, so we
assume otherwise, and consider $(\alpha,y) \in C_{\alpha}$. Then $y = \nabla(D).y_0$ for some interior
point $y_0$ and $D \in \mathbf{M}$, and $\alpha.D = \beta.S$ for some $S \in \mathbf{E}$ and $\beta \in F^{\#}$.
Then 
$$(\alpha,y) = (\alpha,\nabla(D).y_0) \sim (\alpha.D,y_0) = (\beta.S,y_0) \sim (\beta,\nabla(S).y_0)
$$
and $\beta$ is non-degenerate, $\nabla(S).y_0$ is interior (Lemma \ref{lem:propsnabla} (1)) , hence $(\beta,\nabla(S).y_0)$
is the minimal representative in the class, which implies $\beta \in G$. Then $\alpha.D = \beta.S  \in G$
and $y = \Delta(D).y_0$ with $(\alpha.D,y_0) \in C_{\alpha.D}$. This implies
$$
C_{\alpha} = \bigcup_{\stackrel{D \in \mathbf{M}}{\alpha.D \in G}} \nabla(D)(C_{\alpha.D}).
$$
Now, each $C_{\alpha.D}$ is closed, each $\nabla(D)$ is a closed map (Lemma \ref{lem:propsnabla} (2)) and the collection
of all $\Delta \in \mathbf{M}$ that can be applied to $\alpha$ is finite, whence $C_{\alpha}$ is closed.
\end{proof}

\bigskip

We show that, when $F$ is a simplicial set, then $L(F)$
can be constructed as a limit of successive attachments. Notice that,
because of Proposition \ref{prop:cofibnabla}, the natural maps $F_n^{\#} \times \partial \nabla_n \to F_n^{\#} \times \nabla_n$
implied in the attachment are closed cofibrations. 

\begin{proposition} \label{prop:LXattachement}
Let $F$ be a simplicial set, then $(L(F^{(n)}))_{n \geq 0}$ is a filtration of $L(F)$ which determines the topology of $F$. Moreover, for every $n$,
$$L(F^{(n)}) = L(F^{(n-1)}) \cup_{L(\varphi)} \left( \bigsqcup_{x \in F_n^{\#}} \nabla_x \right).
$$
where $\varphi : \bigsqcup_{x \in F_n^{\#}} \partial \Delta_x \to F^{(n-1)}$ is the simplicial attaching map, and the natural map $\bigsqcup_n F_n^{\#} \times \nabla_n \to L(F)$ is a quotient map.
\end{proposition}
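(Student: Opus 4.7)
The plan is to exploit the fact that $L$, being left adjoint to $\Sing_{RV}$, preserves all colimits. Since $F = \mathrm{colim}_n F^{(n)}$ in $\sSet$ via the skeletal filtration (and colimits in $\sSet$ are computed termwise), applying $L$ yields $L(F) = \mathrm{colim}_n L(F^{(n)})$ in $\Top$. Together with Lemma \ref{lem:LGLFclosed}, which shows that each $L(F^{(n-1)}) \into L(F^{(n)})$ is a closed embedding, this exhibits $(L(F^{(n)}))_{n \geq 0}$ as a filtration of $L(F)$ whose final topology agrees with the one defined directly on $L(F)$.

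Next I would establish the attachment formula. Recall (e.g.\ from \cite{FRPIC}, Corollary 4.2.4) that $F^{(n)}$ is the pushout in $\sSet$ of
$$\bigsqcup_{x \in F_n^{\#}} \Delta_x \longleftarrow \bigsqcup_{x \in F_n^{\#}} \partial \Delta_x \xrightarrow{\varphi} F^{(n-1)}.$$
Since $L$ preserves pushouts and coproducts, the desired formula follows once one identifies $L(\Delta_n) \simeq \nabla_n$ and $L(\partial \Delta_n) \simeq \partial \nabla_n$. The former is immediate, because the simplicial set $\Delta_n$ has a unique non-degenerate top simplex (the identity of $[n]$). The latter requires matching the quotient built from the simplicial subset $\partial \Delta_n$ (whose nondegenerate simplices are the injections $[k] \into [n]$ with $k < n$) with the subspace $\partial \nabla_n = L(\partial \PF^*([n]))$ defined earlier: both are the union of the images of the face maps $\nabla(D) : \nabla_k \to \nabla_n$ for $D \in \mathbf{M}$ with $k < n$, and by Lemma \ref{lem:propsnabla}(2) this union is closed in $\nabla_n$.

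Finally, for the quotient-map assertion, I would proceed by induction on the skeleton level, showing that the natural map $q_n : \bigsqcup_{k \leq n} F_k^{\#} \times \nabla_k \to L(F^{(n)})$ is a quotient map. The base $n=0$ is clear. For the induction step, the pushout formula just established endows $L(F^{(n)})$ with the final topology with respect to the maps from $L(F^{(n-1)})$ and $\bigsqcup_{x \in F_n^{\#}} \nabla_x$; combined with the inductive hypothesis for $q_{n-1}$, this yields the assertion for $q_n$. Passing to the colimit over $n$ and using the filtration topology on $L(F)$ already established, one obtains that $\bigsqcup_n F_n^{\#} \times \nabla_n \to L(F)$ is a quotient map; surjectivity follows from the Eilenberg-Zilber property, which provides a unique minimal representative in some $F_n^{\#} \times \nabla_n^{\circ}$ for each class.

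The main obstacle I anticipate is the careful identification $L(\partial \Delta_n) \simeq \partial \nabla_n$, since the left-hand side is built from the combinatorics of a simplicial set while $\partial \nabla_n$ was originally defined through the simplicial complex $\partial \PF^*([n])$. One must verify that the two quotient structures really agree, which reduces to checking that a point of $\nabla_n$ fails to be interior (in the cosimplicial sense) if and only if it lies in the image of some proper face map $\nabla(D)$, an easy consequence of the Eilenberg-Zilber property for $\nabla$ together with Lemma \ref{lem:propsnabla}(3).
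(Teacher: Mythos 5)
Your argument is correct, but it takes a genuinely different route from the paper's. The paper never invokes colimit preservation: it re-traces the concrete analysis of \cite{FRPIC} \S 4.3, checking that the cosimplicial space $\nabla$ has the Eilenberg--Zilber property, deducing unique minimal representatives, showing that $L(F)$ carries the final topology with respect to the characteristic maps $\overline{c}_x : \nabla_n \to L(F)$ (whence the quotient-map statement and the fact that the skeletal filtration determines the topology), and only then, using Lemma \ref{lem:LGLFclosed}, observing that the proof of Theorem 4.3.5 of \cite{FRPIC} applies verbatim to give the attachment formula. You reverse the order: the filtration and the attachment come first, abstractly, from the fact that $L$ is a left adjoint and so preserves the skeletal colimit and the pushout describing $F^{(n)}$, together with the identifications $L(\Delta_x) \cong \nabla_x$ (which is really the co-Yoneda argument for the representable $\Delta_n$ rather than just the existence of a unique non-degenerate top simplex) and $L(\partial \Delta_x) \cong \partial \nabla_x$ (which you correctly reduce to Lemma \ref{lem:LGLFclosed} plus the set-level identification of non-interior points with images of proper face maps); the quotient-map statement then falls out of your skeletal induction. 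Your route buys brevity and independence from the details of the \cite{FRPIC} arguments; the paper's route buys the explicit data (minimal representatives, final topology with respect to the $\overline{c}_x$) that it reuses in the surrounding proofs, notably for Proposition \ref{prop:LFtame}. The one point you should make explicit is that the colimit-preservation argument needs the colimits in the chosen convenient category $\Top$ to coincide with the naive adjunction-space and increasing-union constructions appearing in the statement; since all gluing is along closed inclusions (Proposition \ref{prop:cofibnabla}, Lemma \ref{lem:LGLFclosed}), this does hold, but in a category such as compactly generated weak Hausdorff spaces the verification is intertwined with the tameness properties proved afterwards in Proposition \ref{prop:LFtame}, so one should either run a simultaneous induction or simply compute the colimits in the category of all topological spaces, where the adjunction is equally valid.
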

\begin{proof}
This statement is adapted from the classical analogous statement for the geometric realization functor, so
we need to check that the classical proof uses only properties of the cosimplicial \emph{space}
$[n] \mapsto \Delta_n$ (that we still denote $\DDelta$) that are also satisfied by the cosimplicial
space $[n] \mapsto \nabla_n$ (that we still denote $L$). For this we follow the steps described in \cite{FRPIC}, \S 4.3. First of all, since the cosimplicial set $L$ also has the Eilenberg-Zilber property, then
any element of the tensor product $F \otimes L = L(F)$ can be represented by a unique so-called minimal pair
(\cite{FRPIC}, Proposition 4.2.7). At the set-theoretical level this implies (see \cite{FRPIC}, Proposition 4.3.3) that every
element of $L(F)$ has a unique representative of the form $(\alpha,a)$ with $\alpha$ a non-degenerate
simplex of $F$ of some dimension $n$, and $a \in \nabla_n \setminus \partial \nabla_n$,
and also that, for $f$ a simplicial map, $f$ is injective iff $L(f)$ is injective.
Moreover, the topology of $L(F)$ is the final topology w.r.t. the family
of maps $\overline{c}_x : \nabla_n \to L(F)$, induced by $a \mapsto (x,a) \in F_n \times \nabla_n \to L(F)$
(compare with \cite{FRPIC} p. 153). As a consequence (see \cite{FRPIC}, Proposition 4.3.1) we get that, if
some subset $E \subset \bigsqcup_n F_n$ generates the simplicial set $F$, then $L(F)$ is a quotient space of the subspace $\bigsqcup_n (E \cap F_n) \times \nabla_n$ of $\bigsqcup_n F_n \times \nabla_n$ ; in particular, 
 $L(F)$ is a quotient space of $\bigsqcup_n F_n^{\#} \times \nabla_n$.

From this and Lemma \ref{lem:LGLFclosed} we can adapt
the proof of \cite{FRPIC}, Theorem 4.3.5. First of all, we get from the previous point that the $L(F^{(n)})$ are
closed subspaces of $L(F)$ and form a filtration of it. The fact that the topology of the space $L(F)$ 
is determined by the family $(L(F^{(n)}))_{n \in \N}$ has the same proof as for $|F|$ :
if a map $f : L(F) \to Z$ is such that all its restrictions to $L(F^{(n)})$
are continuous, then so are the composites $f \circ \overline{c}_x$ since any $\overline{c}_x$
factorizes through $L(F^{(n)}) \to L(F)$ for some $n$ ; since $L(F)$ has the final topology
with respect to the $\overline{c}_x$, this fact follows.
 Therefore,
we can fix $n$, and consider $F^{(n)}$ as a simplicial attachment. Then the proof of Theorem 4.3.5 of \cite{FRPIC} can be applied verbatim to our case, and $L(F^{(n)})$ can be described as a topological attachment as in
the statement.
\end{proof}

From this we get the following result. 
\begin{proposition}\label{prop:LFtame} For $F$ a simplicial set, $L(F)$ is paracompact and perfectly normal. It is also compactly generated.
\end{proposition}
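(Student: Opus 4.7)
The strategy is inductive along the skeletal filtration $(L(F^{(n)}))_{n \ge 0}$ supplied by Proposition \ref{prop:LXattachement}. Each $\nabla_n$ is a metric space (for the $L^1$ metric underlying the topology of convergence in probability), hence it is paracompact, perfectly normal and compactly generated (metric spaces being first countable, in particular $k$-spaces). A disjoint topological sum indexed by a discrete set preserves each of these properties, so the space $\bigsqcup_{x \in F_n^{\#}} \nabla_x$ enjoys all three. In particular $L(F^{(0)})$ is a discrete union of points and the base case is trivial.

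For the inductive step, combining Proposition \ref{prop:LXattachement} with Proposition \ref{prop:cofibnabla} realizes $L(F^{(n)})$ as the pushout
\[
L(F^{(n)}) \;=\; L(F^{(n-1)}) \;\cup_{\bigsqcup_{x} \partial \nabla_x}\; \Bigl(\,\bigsqcup_{x \in F_n^{\#}} \nabla_x\Bigr),
\]
where the left leg is a closed cofibration. The plan is to invoke the classical stability results (as used in the proof that CW-complexes are paracompact and perfectly normal, see e.g.\ the appendices of \cite{FRPIC}): a pushout of paracompact (resp. perfectly normal, resp. compactly generated) spaces along a closed cofibration between spaces with the same property is again paracompact (resp. perfectly normal, resp. compactly generated). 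This yields, by induction on $n$, that every $L(F^{(n)})$ has all three properties. Weak Hausdorffness is preserved at each step because closed cofibrations into weak Hausdorff spaces produce weak Hausdorff pushouts.

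For the passage to the colimit, Lemma \ref{lem:LGLFclosed} gives that each $L(F^{(n)})$ is a closed subspace of $L(F)$, and Proposition \ref{prop:LXattachement} asserts that $L(F)$ carries the final topology with respect to the filtration. Sequential colimits of paracompact Hausdorff spaces along closed embeddings are paracompact (Michael's theorem on closed unions of paracompact spaces, applied inductively), perfectly normal (each closed subset becomes a $G_\delta$ by taking a countable intersection of stage-by-stage open neighborhoods), and compactly generated (the final topology with respect to a cover by $k$-spaces is itself a $k$-topology). Assembling these three inheritance properties concludes the proof.

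The only delicate point is the paracompactness assertion: verifying the pushout stability and the sequential colimit stability both require a little care with the Hausdorff hypothesis and with the construction of locally finite refinements across the attaching boundary. These are however exactly the classical arguments that underlie the paracompactness of CW-complexes, and in our setting they apply verbatim because the ingredients — closed cofibrations $\partial \nabla_n \hookrightarrow \nabla_n$ from Proposition \ref{prop:cofibnabla}, and metrizability of each $\nabla_n$ — play the structural role that $S^{n-1} \hookrightarrow B^n$ plays in the CW case.
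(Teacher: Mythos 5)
Your proposal is correct and follows essentially the same route as the paper: induction along the skeletal filtration of Proposition \ref{prop:LXattachement}, using the closed cofibrations $\partial\nabla_n \into \nabla_n$ of Proposition \ref{prop:cofibnabla}, metrizability of the spaces $F_n^{\#}\times\nabla_n$, the classical stability of perfect normality and compact generation under such attachments, Michael's theorem for paracompactness, and then passage to the colimit using that the filtration (by closed subspaces, via Lemma \ref{lem:LGLFclosed}) determines the topology of $L(F)$. The paper handles the Hausdorff subtlety you flag by establishing perfect normality first and only then paracompactness, but this is an ordering of the same argument rather than a different one.
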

\begin{proof}
Recall that a space $X$ is called perfectly normal if every closed subset is the vanishing
locus of some map $X \to \R_+$. Since discrete spaces and the metrizable spaces $F_n^{\#}\times \nabla_n$ are perfectly normal,
by induction
on $n$ we get from Proposition \ref{prop:LXattachement} that all the $L(F^{(n)})$ are perfectly normal (see \cite{FRPIC} Proposition A.4.8 (iv)). Moreover, the embeddings $L(F^{(n)}) \into L(F^{(n+1)})$ are closed
cofibrations (see \cite{FRPIC} Proposition A.4.8 (ii)) and we checked in the proof of Proposition \ref{prop:LXattachement} that the topology
of $L(F)$ is the topology of the union of the $L(F^{(n)})$. From this it follows
(\cite{FRPIC} Proposition A.5.1 (iv)) that $L(F)$ is perfectly normal. In particular it is Hausdorff. In order to prove that it is paracompact, it is by the same arguments
enough to check that each of the $L(F^{(n)})$ is paracompact (\cite{FRPIC} Proposition A.5.1 (v)). By Michael's theorem (\cite{MICHAEL1953}, pages 791-792; \cite{MICHAEL1956}) this
follows by induction on $n$ from Proposition \ref{prop:LXattachement} and the fact that each of the $F_n^{\#} \times \nabla_n$ is metrizable hence paracompact. 

We now prove that $L(F)$ is compactly generated. The fact that
each $L(F^{(n)})$ is compactly generated follows from the proposition
by induction on $n$ (see e.g. \cite{MAYC} ch. 5.2), as
$F_n^{\#} \times \nabla_n$ is (metrizable hence) first countable hence compactly generated. Since the filtration
$L(F^{(n)})$ determines the topology of $L(F)$ this implies that $L(F)$
is compactly generated.

\end{proof}

\begin{remark} Recall that every subset of a space which is both paracompact and perfectly
normal is also paracompact and perfectly
normal (\cite{LW} Appendice I Theorem 6), so this property of CW-complexes is also shared by $L(F)$.
\end{remark}

\subsection{Proof of Theorem \ref{theo:LwFw}}

We follow the scheme of the proof of the comparison theorem of \cite{FRPIC} Theorem 4.3.20, and adapt
it to our case. Let $F$ be a simplicial set. The natural map $p_F^{(0)} : L(F^{(0)}) \to |F^{(0)}|$ is the identity
map on a disjoint union of points, therefore it is a homotopy equivalence. Let us assume that
we know that the probability-law maps $p_F^{(k)} : L(F^{(k)}) \to |F^{(k)}|$ for $k \leq n-1$ are homotopy equivalences and commute with the
natural inclusion maps $|F^{(k-1)}| \subset |F^{(k)}|$ and $L(F^{(k-1)}) \subset L(F^{(k)})$. 
By Proposition \ref{prop:LXattachement} we have a commutative diagram
$$
\xymatrix{
\bigsqcup_{x \in F_n^{\#}}\ar[d]  \nabla_n & \bigsqcup_{x \in F_n^{\#}}\ar[d]\ar[l] \ar[r]   \partial \nabla_n & L(F^{(n-1)})\ar[d]  \\  
\bigsqcup_{x \in F_n^{\#}} \Delta_n & \bigsqcup_{x \in F_n^{\#}}\ar[l] \ar[r]   \partial \Delta_n & |F^{(n-1)}|  \\  
}
$$
where the vertical maps are homotopy equivalences and the horizontal maps going left are closed cofibrations
by Proposition \ref{prop:cofibnabla}. By the gluing theorem (\cite{FRPIC} Theorem A.4.12 and \cite{BROWN} 7.5.7)
this implies that $p_F^{(n)} : L(F^{(n)}) \to |F^{(n)}|$ is a homotopy equivalence. By induction this proves that
$p_F^{(n)}$ is a homotopy equivalence for every $n$. This provides a commutative
ladder of homotopy equivalences $(p_F^{(n)})_{n \geq 0}$, thus the induced map $p_F : L(F) \to |F|$ between the union spaces is a homotopy equivalence (\cite{FRPIC}, Proposition A.5.11). This concludes the proof of the theorem.

\section{Pre-simplicial random variables}
\label{sect:presimpl}

We use \cite{FRPIC} and \cite{EBERTORW}  for reference, and recall the notations
of the introduction. Recall from Section \ref{sect:SRV}
that
$\mathbf{M}$ and $\mathbf{E}$ denotes the subcategories of $\mathbf{\Delta}$ made of the face and degeneracy maps, respectively.
The objects of $\psSet = \Fun(\mathbf{M}^{op},\Set)$ are called pre-simplicial sets
(or semi-simplicial sets, or $\Delta$-sets).

The categories $\mathbf{M}$ and $\mathbf{E}$ are generated
by the elementary face and degeneracy maps, respectively. These are defined
as
$$
\begin{array}{rcrclcrcrclc}
D^c_i &:& [n-1] &\to& [n]& &S^c_i  &: &[n+1] & \to & [n] \\
    & & k & \mapsto & k& \mbox{if $k<i$} & & & k &\mapsto& k& \mbox{if $k\leq i$}\\ 
    & & & & k+1 & \mbox{if $k \geq i$} & & & & & k-1 & \mbox{if $k > i$} \\
\end{array}
$$

 In particular, the geometric realization $\| F \|$ defined in the introduction
 can also be
defined as the quotient of $\bigsqcup_n \left( F_n \times \Delta_n\right)$
by the equivalence relation 
generated by
the $(D_i \alpha,a) \approx (\alpha,D^i a)$ where $D_i = F(D_i^c) : F_n \to F_{n-1}$
and $D^i = \Delta(D_i^c)$. Similarly, $\LL(F)$ is the quotient of
$\bigsqcup_n \left( F_n \times \nabla_n\right)$
by the equivalence relation 
generated by
the $(D_i \alpha,a) \approx (\alpha,D^i_{RV} a)$ where
$D^i_{RV} = \nabla(D_i^c)$.

In this section we will prove the following
\begin{theorem} \label{theo:semiLwFFw}
For every pre-simplicial set $F$, the probability-law map $\LL(F) \to \|F\|$ is an homotopy equivalence.
\end{theorem}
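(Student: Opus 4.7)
The plan is to follow the same skeletal-filtration strategy as the proof of Theorem \ref{theo:LwFw}, but with the simplification that in a pre-simplicial set every simplex is non-degenerate. All the structural preparation transfers: $\LL(F)$ is paracompact, perfectly normal, and compactly generated (same arguments as Proposition \ref{prop:LFtame}); the $n$-skeleton $\LL(F^{(n)})$ is the attachment of $\bigsqcup_{x \in F_n} \nabla_x$ to $\LL(F^{(n-1)})$ along the face-map attaching map $\bigsqcup_{x \in F_n} \partial \nabla_x \to \LL(F^{(n-1)})$; the inclusion $\partial \nabla_n \hookrightarrow \nabla_n$ is a closed cofibration by Proposition \ref{prop:cofibnabla}; and the probability-law map $\partial \nabla_n \to \partial \Delta_n$ is a homotopy equivalence by \cite{SRV}. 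Induction using the gluing theorem (\cite{FRPIC} A.4.12) then yields a homotopy equivalence $p_F^{(n)} : \LL(F^{(n)}) \to \|F^{(n)}\|$ for every $n$, and passing to the colimit gives the theorem.

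However, because the paper advertises an \emph{explicit} homotopy for later use, I would in addition construct a direct section and fiberwise homotopy, as follows. Using the exhaustion map $t \mapsto \Omega_t$, define $s_n : \Delta_n \to \nabla_n$ by sending $(x_0, \dots, x_n)$ to the random variable $f$ with $f^{-1}(k) = \Omega_{y_k} \setminus \Omega_{y_{k-1}}$, where $y_k = x_0 + \cdots + x_k$ and $y_{-1} := 0$. Continuity of $s_n$ follows from continuity of the exhaustion map, and $p_n \circ s_n = \mathrm{id}_{\Delta_n}$ follows from $\la(\Omega_t) = t$. A direct telescoping computation, crucially using order-preservation of face maps, shows $s_m \circ \Delta(D) = \nabla(D) \circ s_n$ for every $D \in \Hom_{\mathbf{M}}([n],[m])$; this compatibility would fail for degeneracies, which is why the construction works cleanly only in the pre-simplicial setting. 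The family $(s_n)_n$ thus induces a continuous section $s : \|F\| \to \LL(F)$ of $p_F$.

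Next, for each $n$, construct a continuous map $H_n : \nabla_n \times I \to \nabla_n$ satisfying $H_n(f,0) = f$, $H_n(f,1) = s_n(p_n(f))$, $p_n \circ H_n(f,t) = p_n(f)$ for all $t$, and the naturality condition $H_m(\nabla(D)(f), t) = \nabla(D)(H_n(f,t))$ for every face map $D$. The idea is to use the $\Phi$ map of Section \ref{sect:prelimmeasure} to continuously deform the partition $(f^{-1}(k))_k$ into the canonical partition $(\Omega_{y_k} \setminus \Omega_{y_{k-1}})_k$ while preserving each slot's measure throughout. Because $\Phi$ is built entirely from the exhaustion map and the sets $f^{-1}(k)$, and because face maps merely insert empty slots at positions where the canonical partition is also empty (by the computation of Step 1), the $H_n$ will commute with every $\nabla(D)$. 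The naturality then descends the family to a continuous homotopy $H : \LL(F) \times I \to \LL(F)$ from $\mathrm{id}$ to $s \circ p_F$, which together with $p_F \circ s = \mathrm{id}_{\|F\|}$ exhibits $p_F$ as a homotopy equivalence.

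The main obstacle is the second step: producing the fiberwise homotopy $H_n$ in a form sufficiently canonical that naturality with respect to every face map can be checked. Since $\Phi$ is defined by an algorithmic recipe (via $\check{\mathbf{g}}$) rather than by a transparent geometric formula, the naturality must be read off from the algebraic structure of the recipe. The key to overcoming this is that the recipe only sees its inputs through the exhaustion map and the sets themselves; combined with the fact established in Step 1 that the canonical partition at level $m$ pulls back under a face map to a trivial extension of the canonical partition at level $n$, this forces the required equivariance of the output.
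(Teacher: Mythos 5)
Your first argument (skeletal filtration, attachment along $\bigsqcup_{x\in F_n}\partial\nabla_n\to\LL(F^{(n-1)})$, the closed cofibration of Proposition \ref{prop:cofibnabla}, the gluing theorem, and passage to the colimit) is a correct proof of the stated homotopy equivalence, and it is genuinely different from the paper's proof of this theorem: the paper does not repeat the induction of Theorem \ref{theo:LwFw} in the pre-simplicial setting, but instead proves Proposition \ref{prop:commuteface}, namely that the section $\sigma_n$ of $p_n$ (the same one you write down, and which indeed commutes with face maps but with no degeneracies needed) admits a homotopy $H_n$ from $\Id_{\nabla_n}$ to $\sigma_n\circ p_n$ that commutes with all $D^i_{RV}$, is fibered over $p_n$, and fixes $\sigma_n(\Delta_n)$ pointwise; these data descend to $\|F\|$ and $\LL(F)$ and exhibit an explicit homotopy inverse. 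Your route is shorter given the machinery already assembled for simplicial sets, and it works verbatim because semi-simplicial sets have no degeneracies; the paper's route costs several pages of explicit measure-theoretic construction but produces a natural, explicit homotopy that is reused later (in the proof of Theorem \ref{theo:eqSingSingRV} and in the Cole-cofibration arguments of Section \ref{sect:quillen}), which a gluing-theorem argument cannot supply.

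Concerning your second part: the plan is exactly the paper's, but as written it is a sketch, not a proof, and the justification of face-naturality is too optimistic. The paper's $H_n$ is built by a double recursion --- the top slot is handled with $\mathbf{h}$, the lower slots with $\Phi$ by descending induction on $k$, and the time variable is reparametrized in two phases ($u\le\alpha_n$ and $u\ge\alpha_n$, with $\alpha_n=\la(f^{-1}(n))$, the second phase recursing into $H_{n-1}$ on $[0,1-\alpha_n]$) --- and the identity $H_{n+1}(u,D^i_{RV}f)=D^i_{RV}H_n(u,f)$ is verified by induction on $n$ together with a descending induction on $i$, the case $i=n+1$ being treated separately (there the inserted slot is the top one, $\hat\alpha_{n+1}=0$, and the time reparametrization degenerates), using among other things $\Phi(1,E_\bullet,A)=E_\bullet$ and careful index bookkeeping $\hat E_\bullet^{k+1}=E_\bullet^{k}$ for $k\ge i$, $\hat E_\bullet^{k}=E_\bullet^{k}$ for $k\le i$. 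The heuristic that naturality is ``forced'' because the recipe only sees its inputs through the exhaustion map and the sets cannot be correct as stated: the very same construction fails to commute with degeneracy maps (see the Remark closing Section \ref{sect:presimpl}), so the equivariance really does depend on the specific interaction of face maps with the recursion and must be checked. Since your first argument already proves the theorem, this gap does not invalidate the proposal; but if you want the explicit homotopy (as the paper needs later), the construction and the face-map verification have to be carried out in full.
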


Recall from \cite{SRV} (Proposition 4.1 and its proof) that the probability-law map $p_n : \nabla_n \to \Delta_n$
has a (continuous) section $\sigma_n : \Delta_n \to \nabla_n$ characterized
by 
$$
\sigma_n(\alpha)(x) = a \ \ \mbox{    if    } \ \ x \in \Omega_{\sum_{u\leq a} \alpha(u)} \setminus \Omega_{
\sum_{u<a} \alpha(u)}
$$
for $\alpha : [n] \to [0,1]$ with $\sum_{k=0}^n \alpha(k) = 1$. The main theorem will readily follow
from the following proposition, which provides an explicit homotopy equivalence. This will
be also used in Section \ref{sect:quillen}.

\begin{proposition} \label{prop:commuteface}
The composition $\sigma_n \circ p_n : \nabla_n \to \nabla_n$ is
homotopic to the identity map, by an homotopy $H_n : [0,1] \times \nabla_n \to \nabla_n$
which commutes with
the face maps $D^i_{RV}$, that is
$$
\forall n\, \forall f \in \nabla_n\, \forall i \in [0,n+1] \, \forall u \in [0,1]\, H_{n+1}(u,D^{i}_{RV} f)= D^{i}_{RV} H_n(u,f)
$$
and such that
\begin{itemize}
\item $p_n(H_n(u,f)) = p_n(f)$ for all $f \in \nabla_n, u \in [0,1]$.
\item $H_n(u,\sigma_n(\alpha)) = \sigma_n(\alpha)$ for all $\alpha \in \Delta_n, u \in [0,1]$.
\end{itemize}  
\end{proposition}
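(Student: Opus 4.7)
The plan is to construct the homotopy $H_n$ explicitly using the measure-algebra tools (especially the transport map $\Phi$) recalled in Section~\ref{sect:prelimmeasure}. I identify $f \in \nabla_n$ with its partition $\underline{A} = (A_0, \dots, A_n)$ and write $\alpha_k = \lambda(A_k)$, $s_k = \alpha_0 + \cdots + \alpha_k$, $s_{-1} = 0$; the target $\sigma_n(p_n(f))$ then corresponds to the canonical partition $(\Omega_{s_k} \setminus \Omega_{s_{k-1}})_{k=0}^n$. I encode $H_n(u, f)$ as a continuous family of partitions $(B_0(u), \dots, B_n(u))$ with $\lambda(B_k(u)) = \alpha_k$, equivalently as a nested family $\emptyset = T_{-1}(u) \subset T_0(u) \subset \cdots \subset T_n(u) = \Omega$ with $\lambda(T_k(u)) = s_k$, $T_k(0) = \bigcup_{j \leq k} A_j$ and $T_k(1) = \Omega_{s_k}$, setting $B_k(u) = T_k(u) \setminus T_{k-1}(u)$.

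I would build the $T_k$ by induction on $k$, applying $\Phi$ to transport $A_k$ inside a time-varying ambient path $E^k_\bullet(u) \subset \Omega \setminus T_{k-1}(u)$ with a ratio $q_k(u)$ chosen so that $\lambda(B_k(u)) = \alpha_k$ for every $u$. The ambient path $E^k_\bullet$ and the function $q_k$ have to be set up so that $E^k_0$ contains $A_k$, $E^k_1$ equals $\Omega_{s_k}\setminus \Omega_{s_{k-1}}$ exactly and $q_k(1) = 1$; under these conditions the output $B_k(u)$ automatically starts at $A_k$ and ends at $\Omega_{s_k}\setminus \Omega_{s_{k-1}}$. Joint continuity in $u$ and $f$ will follow from the Lipschitz bounds satisfied by $\mathbf{g}$, $\mathbf{h}$ and the continuity of $\Phi$.

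Once $H_n$ is defined, the law-preservation condition $p_n(H_n(u,f)) = p_n(f)$ is immediate from $\lambda(B_k(u)) = \alpha_k$. The fixed-point condition on canonical random variables follows from the constancy clauses in the definitions of $\mathbf{g}$ and $\Phi$: if $f = \sigma_n(\alpha)$, then at every stage of the induction the input data to $\Phi$ is already constant in $u$, hence so is the output, and $H_n(u,f) = f$. For naturality with respect to a face map $D^i_{RV}$: if $A_i = \emptyset$ then $\alpha_i = 0$ and $\Omega_{s_i} \setminus \Omega_{s_{i-1}} = \emptyset$, so the $k = i$ step of the induction is trivial and $T_i(u) = T_{i-1}(u)$; the remaining steps reproduce exactly the steps of the induction for $H_{n-1}$ applied to the pre-image of $f$ under $D^i_{RV}$, after reindexing via $D^c_i$, giving $H_n(u, D^i_{RV} g) = D^i_{RV} H_{n-1}(u,g)$.

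The main obstacle will be arranging the inductive application of $\Phi$ to land at the \emph{prescribed} endpoint $\Omega_{s_k} \setminus \Omega_{s_{k-1}}$ at $u=1$, since $\Phi$ on its own only controls the starting set and the measure profile, not a target endpoint. I expect to handle this either by choosing $E^k_\bullet(u)$ so that it collapses onto the target at $u=1$ while containing $A_k$ at $u=0$ (using $\mathbf{g}$, $\mathbf{h}$ applied to pieces of the exhaustion), or by splitting $[0,1]$ and running $\Phi$ twice and concatenating — first transporting $A_k$ into an enlargement of the target, then contracting onto it — while simultaneously preserving both the constancy on canonical inputs and the face-map naturality that allows the inductive reindexing described above.
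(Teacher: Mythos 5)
Your plan assembles the right toolkit ($\Phi$, $\mathbf{g}$, $\mathbf{h}$, the exhaustion, the retracting action) and the right list of properties to verify, but the central construction is left open, and it is exactly the part that carries the content of the proposition. You want a single ascending pass producing nested sets $T_k(u)$ of constant measure with \emph{all} of them landing on $\Omega_{s_k}$ at $u=1$; as you note yourself, $\Phi$ prescribes the starting set and the measure profile but not the endpoint, so an additional mechanism is needed, and you only offer two speculative remedies without carrying either out or checking that they remain compatible with the two constraints that make the proposition usable later (constancy on $\sigma_n(\Delta_n)$ and commutation with the $D^i_{RV}$). The paper resolves the endpoint problem not by strengthening $\Phi$ but by a different architecture: a single descending $\Phi$-chain, seeded by $E^n_u$ built from $\mathbf{h}(X_{n-1},u)$ and the scalar action, produces a measure-preserving homotopy $\check H_n$ that canonicalizes \emph{only the top colour} $n$, pushing $f^{-1}(n)$ onto the tail of the exhaustion at $u=1$; the full homotopy $H_n$ is then the concatenation in time of $\check H_n$ reparametrized onto $[0,\alpha_n]$ with $H_{n-1}$ applied, by induction on $n$, to the restriction of $\check H_n(1,f)$ to the complement of that tail (rescaled). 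This is close in spirit to your second ``run $\Phi$ twice and concatenate'' idea, but the recursion is on $n$ rather than a second pass per colour, and the time threshold $\alpha_n$ is chosen precisely so that the later verifications go through. Also, the fixed-point property is not purely a ``constancy clause'': it needs the explicit computation $\Phi\bigl(\frac{a}{b},\Omega_b,\Omega_a\bigr)(u)=\Omega_a$ and the compatibility of the second phase with $\sigma_{n-1}$.

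The second gap is that your naturality argument cannot be assessed, because it presupposes a definition you have not produced. In the paper this verification is the longest part of the proof: a double induction (on $n$, descending on $i$), with bookkeeping showing $\hat E^{k+1}_\bullet = E^k_\bullet$ for $k \geq i$ and $\hat E^{k}_\bullet = E^k_\bullet$ for $k \leq i$ (using $\Phi(1,E_\bullet,A)=E_\bullet$ at the pivot $k=i$), the observation that the reparametrization thresholds match ($\check\alpha_{n+1}=\alpha_n$ when $i\leq n$), and a separate degenerate case $i=n+1$ where $\check\alpha_{n+1}=0$ collapses the first phase entirely. Your one-line claim that ``the $k=i$ step is trivial and the remaining steps reproduce the induction for $H_{n-1}$'' is the right heuristic, but whether it survives depends on exactly how you force the endpoint at $u=1$: any fix involving time rescalings by quantities such as $\alpha_k$ or $s_k$ (as the paper's does) shifts indices and thresholds under $D^i_{RV}$, and the compatibility must be checked case by case. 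So the proposal is a reasonable outline sharing the paper's first-phase mechanism, but the two steps it postpones --- the endpoint-forcing and the face-map compatibility of whatever mechanism is chosen --- are precisely where the proof lives, so as it stands it is not a proof.
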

\bigskip

The proof of the Theorem then goes as follows. The maps $\sigma_n$, $p_n$ and $H_n$ for $n \geq 0$
together define maps $\tilde{\sigma}_F : A \to B$, $\tilde{p}_F : B \to A$ and $\tilde{H}_F : I \times B \to B$
with $A =\bigsqcup_n F_n \times \Delta_n$ and $B = \bigsqcup_n F_n \times \nabla_n$.
Because of the compatibility with the face maps, they induce maps $\sigma_F : \| F \| \to \LL(F)$,
$p_F :   \LL(F) \to \| F \|$ and $H_F : I \times \LL(F) \to |F|$. It is readily checked
that $\tilde{p}_F \circ \tilde{\sigma}_F = \Id$, hence $p_F \circ \sigma_F = \Id$. By the proposition,
$H_F$ is an homotopy between the identity and $\sigma_F\circ p_F$, and this proves the claim.

In the remaining part of this section, we prove Proposition \ref{prop:commuteface}.

\subsection{Homotopy equivalence : definition in the case $n = 1$}
\label{sect:homotopn1}
\ 

We use the retracting action $(t,A) \mapsto tA$ of the topological monoid $[0,1]$
on $\mathfrak{M} = \nabla_2$ defined in Section \ref{sect:prelimmeasure}.

We first consider the case $n = 1$ and set, for $u \in [0,1]$ and $f \in \nabla_1$,
$$
\begin{array}{lcll}
\hat{H}_1(u,f) & =& 0 & \mbox{over } \frac{\alpha_0}{u + (1-u) \alpha_0} \mathbf{h}(f^{-1}(\{ 0 \}),u) \\
& =& 1 & \mbox{ over  its complement.}
\end{array} 
$$
and set $H_1(u,f) = \hat{H}_1(\frac{u}{\alpha_1},f)$ for $ u \leq \alpha_1$,
$H_1(u,f) = \sigma_1 \circ p_1(f) = \hat{H}_1(1,f)$ for $u > \alpha_1$,
where $\alpha_k = \la(f^{-1}(\{ k \}))$. Then $H_1 : [0,1] \times \nabla_1 \to \nabla_1$
provides an homotopy between $f \mapsto H_1(0,f) = \hat{H}_1(0,f) = f $
and $f \mapsto H_1(1,f) = \hat{H}_1(1,f) = \sigma_1 \circ p_1(f)$.
Moreover, $p_1(H_1(u,f))= p_1(f)$
since
$$
\la\left( \frac{\alpha_0}{u + (1-u) \alpha_0} \mathbf{h}(f^{-1}(\{ 0 \}),u) \right)
=
\frac{\alpha_0}{u + (1-u) \alpha_0} \left( u + (1-u)  \la(f^{-1}(\{ 0 \})) \right) = \alpha_0
$$
Finally, since $\mathbf{h}(\Omega_a,u) = \Omega_{u+(1-u)a}$ we have $\hat{H}_1(u,\sigma_1(\alpha)) = \sigma_1(\alpha)$
hence $H_1(u,\sigma_1(\alpha)) = \sigma_1(\alpha)$ for all $\alpha \in \Delta_1$ and $u \leq \alpha_1$.

\subsection{Homotopy equivalence : definition in the case of higher $n$}
\label{sect:homotophighern}
We then construct an homotopy $H_n : [0,1] \times \nabla_n \to \nabla_n$ between $\Id_{\nabla_n}$ and
$\sigma_n \circ p_n$ by induction on $n$. We use the map $\Phi$ of Section \ref{sect:prelimmeasure}.

Let $f \in \nabla_n$ with $\underline{\alpha} = p_n(f) \in \Delta_n$. We denote
$\underline{X}=(X_0\subset X_1\subset \dots)$ defined as $X_k = f^{-1}(\{ 0,\dots, k\})$
and $\underline{x} = (x_0 \leq x_1\leq )$ defined as $x_k = \alpha_0+\dots+\alpha_k$.
We set
$$
E_u^n = \frac{x_{n-1}}{u+(1-u)x_{n-1}} \mathbf{h}(X_{n-1},u), \ \ \Omega^n_u = \Omega \setminus E_u^n
$$
and, by descending induction, for $2 \leq k$ and $u \in [0,1]$,
$$
E_u^{k-1} = \Phi\left(
\frac{x_{k-2}}{x_{k-1}},E_{\bullet}^k,X_{k-2}\right)(u), \ \ 
\Omega^{k-1}_u = E_u^k \setminus E_u^{k-1}
$$
and $\Omega_{\bullet}^0 = E_{\bullet}^1$.

\begin{figure}
\begin{center}
\begin{tikzpicture}
\draw (0,0) -- (4,0) -- (4,-4) -- (0,-4) -- cycle;
\fill[blue] (3,-1) -- (4,-1) -- (4,-4) -- (3,-4) -- cycle;
\fill[yellow] (2,-2) -- (3,-2) -- (3,-4) -- (2,-4) -- cycle;
\fill[shading=axis,rectangle, top color = gray!30, bottom color = orange] (0,0) -- (3,0) -- (3,-1) -- (0,-1) -- cycle;
\draw[ultra thick, red] (0,-4) -- (2,-4);
\fill[shading=axis,rectangle, top color = gray!30, bottom color = blue] (0,0) -- (4,0) -- (4,-1) -- (3,-1) -- cycle;
\fill[red] (0,-2) -- (2,-2) -- (2,-4) -- (0,-4) -- cycle;
\fill[shading=axis, top color = orange, bottom color = red] (0,-1) -- (3,-1) -- (2,-2) -- (0,-2) -- cycle;

\fill[shading=axis, top color = orange, bottom color = yellow] (0,-1) -- (3,-1) -- (3,-2) -- (2,-2) -- cycle;
\draw (-0.2,0) node {$0$};
\draw (-0.2,-4) node {$1$};
\draw (2,0.5) node {$f$};
\draw (2,-4.5) node {$\sigma_n\circ p_n(f)$};
\draw (4.2,0) node {$0$};
\draw (4.2,-4) node {$1$};

\end{tikzpicture}
\end{center}
\caption{Homotopy between $\sigma_n \circ p_n$ and $\Id_{\nabla_n}$}
\label{fig:homotopsigmapsi}
\end{figure}
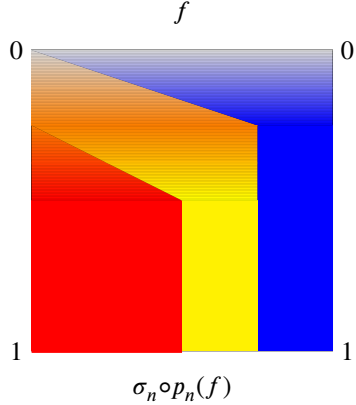

From the defining properties of the maps $\Phi$ and $\mathbf{h}$ we immediately get
\begin{itemize}
\item For all $u$, $\la(E_u^n) = x_{n-1}$ and $\la(\Omega_u^n) = \alpha_n$
\item By induction on $k$, for all $k \geq 1$, $\la(E_{\bullet}^k) = x_{k-1}$
\item hence , for all $k \geq 0$, $\la(\Omega_{\bullet}^k) = \alpha_k$, with $\Omega_0^k = f^{-1}(\{ k \})$.
\item For all $r < k$, $\Omega_u^r \subset E_u^{r+1} \subset \dots \subset E_u^k$
hence $\Omega_u^r \cap \Omega_u^k = \emptyset$ and, when $u$ is fixed, the
$\Omega_u^k$ form a partition of $\Omega$, as $\sum_k \la(\Omega_u^k) = 1$.
\item $\Omega_1^n = ]x_{n-1},1] = ]1-\alpha_n,1]$ 
\end{itemize}
hence we can define $\check{H}_n(u,f)(t) = k$ if $t \in \Omega_u^k$,
and $H_n(u,f) = \check{H}_n(\frac{u}{\alpha_n},f)$ for $0 \leq u \leq \alpha_n$, and, for $\alpha_n \leq u \leq 1$,
$$
\begin{array}{lcll}
H_n(u,f)(t) &=& H_{n-1}(\frac{u-\alpha_n}{1-\alpha_n},\check{H}_n(1,f)_{[0,1-\alpha_n]})(\frac{t}{1-\alpha_n}) 
& \mbox{ for } 0 \leq t \leq 1- \alpha_n \\
& = & n & \mbox{ for }   1- \alpha_n \leq t \leq 1 \\
\end{array}$$
 where $\check{H}_n(1,f)_{[0,1-\alpha_n]} \in \nabla_{n-1}$ is defined by
$$
u \mapsto \check{H}_n(1,f)\left(
(1-\alpha_n) u
\right). 
$$
 In the case $n = 2$, this homotopy
is depicted in Figure \ref{fig:homotopsigmapsi}.

\medskip

We check that $H_n(0,f)= \check{H}_n(0,f) = f$ since $\Omega_0^k = f^{-1}(\{ k \})$,
and, using the induction assumption, that, for $t \leq 1- \alpha_n$, we have  
$$
H_n(1,f)(t)=H_{n-1}(1,\check{H}_n(1,f)_{[0,1-\alpha_n]})(\frac{t}{1-\alpha_n})=
\sigma_{n-1} \circ p_{n-1} (\check{H}_n(1,f)_{[0,1-\alpha_n]})(\frac{t}{1-\alpha_n})
$$
and, since $\la(\Omega_{\bullet}^k) = \alpha_k$, we know that, for $k < n$,
$p_{n-1} (\check{H}_n(1,f)_{[0,1-\alpha_n]})_k = \frac{\alpha_k}{1-\alpha_n}$
hence   $$\sigma_{n-1} \circ p_{n-1} (\check{H}_n(1,f)_{[0,1-\alpha_n]}) (\frac{t}{1-\alpha_n})
 = \sigma_n \circ p_n(f)(t).$$
  Moreover, if $t \geq 1- \alpha_n$, then $H_n(1,f)(t) = n = \sigma_n \circ p_n(f)(t)$, whence $H_n(1,f) = \sigma_n \circ p_n (f)$.

\bigskip

\subsection{Proof of Proposition \ref{prop:commuteface} : generalities}

We check by induction on $n$ that the maps $H_n$ satisfy the properties of Proposition \ref{prop:commuteface},
and first of all that it indeed provides an homotopy from $\Id_{\nabla_n}$
to $\sigma_n \circ p_n$.

For $f \in \nabla_n$, we have $H_n(0,f)= \check{H}_n(0,f) = f$, since $\Omega_0^k = f^{-1}(\{ k \})$. Moreover, using the induction assumption, we have that, for $t \leq 1- \alpha_n$,   
$$
H_n(1,f)(t)=H_{n-1}(1,\check{H}_n(1,f)_{[0,1-\alpha_n]})(\frac{t}{1-\alpha_n})=
\sigma_{n-1} \circ p_{n-1} (\check{H}_n(1,f)_{[0,1-\alpha_n]})(\frac{t}{1-\alpha_n})
$$
and, since $\la(\Omega_{\bullet}^k) = \alpha_k$, we know that, for $k < n$,
$p_{n-1} (\check{H}_n(1,f)_{[0,1-\alpha_n]})_k = \frac{\alpha_k}{1-\alpha_n}$
hence   
$$\sigma_{n-1} \circ p_{n-1} (\check{H}_n(1,f)_{[0,1-\alpha_n]}) (\frac{t}{1-\alpha_n})
 = \sigma_n \circ p_n(f)(t).$$
Finally, if $t \geq 1- \alpha_n$, then $H_n(1,f)(t) = n = \sigma_n \circ p_n(f)(t)$, whence $H_n(1,f) = \sigma_n \circ p_n (f)$ and $H_n$ is indeed an homotopy from $\Id_{\nabla_n}$
to $\sigma_n \circ p_n$.

\bigskip

We now prove that $p_n( H_n(u,f)) = p_n(f)$ for all $u \in [0,1]$.
For $u \leq \alpha_n$ this is clear as $p_n( H_n(u,f)) = p_n( \check{H}_n(\frac{u}{\alpha_n},f))
= p_n(f)$ since $\la(\Omega_{\bullet}^k) = \alpha_k$ for all $k \geq 1$.
For $u \geq \alpha_n$ we have $p_n(H_n(u,f))_n = \alpha_n = p_n(f)_n$ and, for $k < n$,
$$
p_n(H_n(u,f))_k
= (1-\alpha_n)p_{n-1}(H_{n-1}(\frac{u-\alpha_n}{1-\alpha_n},\check{H}_n(1,f)_{[0,1-\alpha_n]}))_k
= (1-\alpha_n) p_{n-1}(\check{H}_n(1,f)_{[0,1-\alpha_n]})_k = \alpha_k
$$
hence $p_n(H_n(u,f)) = p_n(f)$ and this proves the claim.

Let us now consider the case $f = \sigma_n(\alpha)$,
and prove $H_n(u,f) = f$ for every $u \in [0,1]$. 
From the property $\mathbf{h}(\Omega_a,u) = \Omega_{u+(1-u)a}$ we get $E_{\bullet}^n =
\Omega_{x_{n-1}}$. From the explicit construction of $\Phi$ in Section \ref{sect:prelimmeasure} 
we get that, for $0 \leq a \leq b \leq 1$,

$$
\Phi\left( \frac{a}{b},\Omega_b,\Omega_a \right)(u) = 
\check{\mathbf{g}}\left( \Omega_a \cap \Omega_b, \frac{a}{\la(\Omega_a \cap \Omega_b)}\right)
=
\check{\mathbf{g}}\left( \Omega_a , \frac{a}{a}\right)
= \Omega_a$$
It follows by descending induction on $k$ that $E_{\bullet}^k = \Omega_{x_{k-1}}$
whence $\check{H}_n(u,\sigma_n(\alpha)) = \sigma_n(\alpha)$. Clearly then $H_n(u,\sigma_n(\alpha)) = \sigma_n(\alpha)$ for $u \leq \alpha_n$. 
Letting $\beta_k = \alpha_k/(1-\alpha_n)$ for $k < n$, we have $\sigma_n(\alpha)_{[0,1-\alpha_n]}
= \sigma_{n-1}(\beta)$ hence, for $u \geq \alpha_n$,
$$
H_n(u,\sigma_n(\alpha))(t) = H_{n-1}(\frac{u-\alpha_n}{1-\alpha_n},\sigma_{n-1}(\beta))(\frac{t}{1-\alpha_n}) 
= \sigma_{n-1}(\beta)(\frac{t}{1-\alpha_n}) = \sigma_n(\alpha)(t)
$$
for $t \leq 1-\alpha_n$, and $H_n(u,\sigma_n(\alpha))(t) = n = \sigma_n(\alpha)(t)$ for $t > 1- \alpha_n$,
and we get $H_n(u,\sigma_n(\alpha)) = \sigma_n(\alpha)$ for all $u,\alpha$.

It remains to prove that $H_n$ commutes with the face maps.

\subsection{Proof of Proposition \ref{prop:commuteface} : face maps}

We consider $D^i_{RV} : \nabla_n \to \nabla_{n+1}$ for $0 \leq i \leq n+1$
and denote $\hat{E}_u^k$, $\hat{\Omega}_u^k$ the sets $E_u^k$, $\Omega_u^k$ for
$D^i_{RV}(f)$. We set $B_k = f^{-1}([k])$, $\hat{B} = (D^i_{RV}f)^{-1}([k])$, 
$\alpha_k = \la(f^{-1}(\{ k \}))$,  $\check{\alpha}_k = \la((D^i_{RV}f)^{-1}(\{ k \}))$,
$A_k = \la(B_k)$, $\hat{A}_k = \la(\hat{B}_k)$. We have 
$A_k = \alpha_0 + \dots + \alpha_k$, $\hat{A}_k = \hat{\alpha}_0 + \dots + \hat{\alpha}_k$.

We have $\hat{\alpha}_k = \alpha_k$ for $k \leq i-1$, $\hat{\alpha}_i = 0$, and $\hat{\alpha}_{k+1} = \alpha_k$
for $k \geq i$. Also, $\hat{B}_0 = B_0$, \dots, $\hat{B}_{i-1} = B_{i-1}$
and $\hat{B}_i = \hat{B}_{i-1} = B_{i-1}$, $\hat{B}_{i+1} = B_i$, \dots, $\hat{B}_{n+1} = B_n$.
As a consequence, $\hat{A}_0 = A_0$, \dots, $\hat{A}_{i-1} = A_{i-1}$
and $\hat{A}_i = \hat{A}_{i-1} = A_{i-1}$, $\hat{A}_{i+1} = A_i$, \dots, $\hat{A}_{n+1} = A_n$.

We want to prove that
$$
\forall n\, \forall i \in [0,n+1] \, \forall u \in [0,1]\, H_{n+1}(u,D^{i}_{RV} f)= D^{i}_{RV} H_n(u,f)
$$
We prove this by induction on $n$.
\medskip

For a given $n$ we then prove this by descending induction on $i$.
Therefore we start assuming $i = n+1$. Then $D^i_{RV}$ is the inclusion
$\nabla_n \subset \nabla_{n+1}$, and we have $\check{\alpha}_{n+1} = 0$, hence $1-\check{\alpha}_{n+1} = 1$ and
$$
H_{n+1}(u,D^{n+1}_{RV} f)= \check{H}_n(u,D^{n+1}_{RV} f) = H_n(u,f) = D^{n+1}_{RV} H_n(u,f)
$$
by definition.
 
We then assume that $i < n+1$. Then 
$$
\hat{E}_u^{n+1} = \frac{\hat{A}_n}{u+(1-u)\hat{A}_n} \mathbf{h}(\hat{B}_n,u)
=\frac{A_{n-1}}{u+(1-u)A_{n-1}} \mathbf{h}(B_{n-1},u)=E_u^n$$

hence $\hat{E}_{\bullet}^{n+1} = E_{\bullet}^n$
and $\hat{\Omega}_{\bullet}^{n+1} = \Omega_{\bullet}^n$. Then,
by descending induction on $k$, we have that, provided $k > i$,
$$
\hat{E}_{\bullet}^k = \Phi\left( \frac{\hat{A}_{k-1}}{\hat{A}_k}, \hat{E}_{\bullet}^{k+1},
\hat{B}_{k-1})\right)
= \Phi\left( \frac{A_{k-2}}{A_{k-1}}, E_{\bullet}^{k},
B_{k-2}\right) = E_{\bullet}^{k-1}
$$

Therefore, $\hat{E}_{\bullet}^{k+1} = E_{\bullet}^{k}(f)$ when $k \geq i$.

Then, 
$$
\hat{E}_{\bullet}^i 
= \Phi\left( \frac{\hat{A}_{i-1}}{\hat{A}_i},\hat{E}_{\bullet}^{i+1},\hat{B}_{i-1} \right)
= \Phi\left( \frac{A_{i-1}}{A_{i-1}},E_{\bullet}^{i},B_{i-1} \right)
= \Phi\left( 1,E_{\bullet}^{i},B_{i-1} \right) = E_{\bullet}^{i}
$$
since one always has $\Phi(1,E_{\bullet},A) = E_{\bullet}$.

Then, for $k < i$, we
prove that
$$
\hat{E}_{\bullet}^k 
= \Phi\left( \frac{\hat{A}_{k-1}}{\hat{A}_k},\hat{E}_{\bullet}^{k+1},\hat{B}_{k-1} \right)
= \Phi\left( \frac{A_{k-1}}{A_{k}},E_{\bullet}^{k+1},B_{k-1} \right) = E_{\bullet}^k
$$
again by descending induction on $k$. Summarizing, one gets 
$\hat{E}_{\bullet}^{k+1} = E_{\bullet}^{k}$ for $k \geq i$,
$\hat{E}_{\bullet}^{k} = E_{\bullet}^{k}$ for $k \leq i$. It follows that
$\hat{\Omega}_{\bullet}^{k} = E_{\bullet}^{k} \setminus E_{\bullet}^{k-1} = \Omega_{\bullet}^{k-1}$
for $k > i$, $\hat{\Omega}_{\bullet}^i = \emptyset$,
$\hat{\Omega}_{\bullet}^{k} = \Omega_{\bullet}^{k}$ for $k < i$. Thus, from the
definition we get $\check{H}_{n+1}(u,D^i_{RV}f) = D_i^{RV} \check{H}_n(u,f)$
for all $u \in [0,1]$ hence
and
$$H_{n+1}(u,D^i_{RV}f)
= \check{H}_{n+1}(\frac{u}{\check{\alpha}_{n+1}},D^i_{RV}f)
= \check{H}_{n+1}(\frac{u}{\check{\alpha}_{n}},D^i_{RV}f)
= D^i_{RV}\check{H}_n(\frac{u}{\check{\alpha}_{n}},f)
= D^i_{RV} H_n(u,f)
$$
for $0 \leq u \leq \check{\alpha}_{n+1} = \alpha_n$.
Then, for $\check{\alpha}_{n+1} = \alpha_n \leq u \leq 1$, we have over $[0,1-\check{\alpha}_{n+1}]$ that
$$H_{n+1}(u,D^i_{RV}f)(t) = H_n\left(
\frac{u-\check{\alpha}_{n+1}}{1-\check{\alpha}_{n+1}},
\check{H}_{n+1}(1,(D^i_{RV}f)_{|[0,1-\check{\alpha}_{n+1}]}) \right)\left(\frac{t}{1-\check{\alpha}_{n+1}}\right) 
$${}$$
=
H_n\left(
\frac{u-\alpha_{n}}{1-\alpha_n},
\check{H}_{n+1}(1,D^i_{RV}(f_{|[0,1-\alpha_{n}]}))
\right)\left(\frac{t}{1-\check{\alpha}_{n+1}}\right)
$$
{}
$$=
H_n\left(
\frac{u-\alpha_{n}}{1-\alpha_n},
D^i_{RV}\check{H}_{n}(1,f_{|[0,1-\alpha_{n}]})
\right)\left(\frac{t}{1-\check{\alpha}_{n+1}}\right)
$${}$$=
D^i_{RV} H_{n-1}\left(
\frac{u-\alpha_{n}}{1-\alpha_n},
\check{H}_{n}(1,f_{|[0,1-\alpha_{n}]}))
\right) \left(\frac{t}{1-\alpha_n}\right)
=
D^i_{RV} H_n(u,f) (t)
$$
and for $t \geq 1 - \check{\alpha}_{n+1}= 1 - \alpha_n$ we have
$H_{n+1}(u,D^i_{RV}f)(t) = n+1$ while $ D^i_{RV} H_n(u,f) (t) = D_i^c(n) = n+1$
whence $H_{n+1}(u,D^i_{RV}f) = D^i_{RV} H_n(u,f)$ 
and this proves the claim.

\begin{remark} It can be checked by explicit computations that the above
construction is \emph{not} compatible with the degeneracy maps,
already for $f \in \nabla_2$ being
$f([0,1/3[) = 2$,$f([1/3,2/3]) = 1$ and $f(]2/3,1]) = 0$,
with $\Omega_t = [0,t]$ and $[0,1]$ is endowed with the Lebesgue measure.
\end{remark}

\section{Additional properties}

\subsection{Kan condition for $\Sing_{RV} X$}
\label{sect:kan}
Here we prove that, for $X$ a topological space, the simplicial set $\Sing_{RV} X$ satisfies
the Kan condition. For this,
consider some $n \geq 1$, $k \in [n]$, $z_i \in (\Sing_{RV} X)_{n-1} = (\Sing_{RV} X)([n-1])$
for $i \in [n]$ with $i \neq k$, such that $z_i.D_j^c = z_j. D_{i-1}^c$ for $0 \leq j < i \leq n$ and $k \not\in \{ i,j \}$. In order to prove that $\Sing_{RV} X$ satisfies the Kan condition, we need (see e.g. \cite{FRPIC} \S 4.5) to find $z \in (\Sing_{RV} X)_n$ such that $z_i = z.D_i^c$ for $i \neq k$.

For $\underline{a} = (a_0,\dots,a_n) \in \Delta_n$, denote $m_k(\underline{a}) = \min_{r \neq k} a_r$,
and define $q(\underline{a}) \in \Delta_n$ by $q(\underline{a})_i = a_i - m_k(\underline{a})$ for $i \neq k$,
$q(\underline{a})_k = a_k + n m_k(\underline{a})$. This defines a continuous map which is actually a
retraction from $\Delta_n$
onto its $k$-th horn $\Lambda_n^k = \{ (a_0,\dots,a_n) \ | \ \exists i \neq k \ \ a_i = 0 \}$.
We want to define $z_n : \nabla_n \to X$. Let us represent elements of $\nabla_n$ by $(n+1)$-tuples $\underline{A}= (A_0,\dots,A_n)$
providing a partition of $\Omega$ into measurable sets, with $A_i$ the preimage of $i \in [n]$ by the
corresponding random variable $\Omega \to [n]$ and introduce
$$
V_n^k = p_n^{-1}(\Lambda_n^ k) = \{ \underline{A}= (A_0,\dots,A_n) \in \nabla_n \ | \ \exists i \neq k \la(A_i) = 0 \}
$$
The $z_i \in (\Sing_{RV} X)_{n-1}$ for $i \neq k$ can immediately be glued together
into a map $\check{z} : V_n^k \to X$. 

Then, consider the map $\nabla_n \to \nabla_n$
defined by mapping  $\underline{A}= (A_0,\dots,A_n)$ to $\underline{B} = (B_0,\dots,B_n)$
with $B_k$ equal to the complement in $\Omega$ of $\bigcup_{r \neq k} B_r$
and, for $i \neq k$, $B_i = A_i$ if $\la(A_i) = 0$, and  $B_i = \mathbf{g}(A_i,\min_{r \neq k} \la(A_r)/\la(A_i))$
otherwise, where $\mathbf{g}$ is as in Section \ref{sect:homotopn1}. We have $B_i \subset A_i$ for $i \neq k$, hence the $B_i$'s still form a partition of $\Omega$,
and it is easily checked that $\la(B_i) = q(\underline{a})_i$ for $\underline{a} = p_n(\underline{A}) \in \Delta_n$.
In particular $\underline{B} \in V_n^k$ and this defines a continuous retraction $\nabla_n \to V_n^k$. Composing
it with $\check{z} : V_n^k \to X$ provides $z \in \Sing_{RV} X$ such that $z_i = z.D_i^c$ for $i \neq k$, and this
proves the claim.

Although it is not needed for the proof, 
as in the classical case one can prove that
$V_n^k$ is a \emph{deformation} retract of $\nabla_n$, see Proposition \ref{prop:VnrNablanstrongdefretract} below.

\subsection{Ordered simplicial complexes}
\label{sect:orderedSC}
Let $\KK$ be a simplicial complex over a \emph{totally ordered} set $S$ of vertices. Then
a simplicial set $S\KK$ can be obtained in a standard way by repeating
vertices, namely 
$$S\KK_n = \{ (s_0,\dots,s_n) \in S^{n+1} ; s_0 \leq s_1 \leq \dots \leq s_n \ \& \ \{ s_0,\dots,s_n \} \in \KK \}
$$
and, for $f \in \Hom_{\Delta}( [m], [n])$,  $S\KK(f)$ maps $(s_0,\dots,s_n)$
to $(s_{f(0)},\dots,s_{f(m)})$. A non-degenerate $n$-simplex is characterized by the property $s_0 < \dots < s_n$.

In \cite{SRV} we defined a (metric) space of simplicial random variables $L(\KK)$
associated to the simplicial complex $\KK$. Letting $|\KK|_w$ and $|\KK|_m$ be the geometric realizations of $\KK$ equipped with the weak and strong (or metric) topology, respectively,
the proof of Theorem \ref{theo:comprealKK} consists in the following chain of homotopy
equivalences
$$
L(S\KK) \sim |S \KK| \sim |\KK|_w \sim |\KK|_m \sim L(\KK)
$$
where the first one is given by Theorem \ref{theo:LwFw}, the second one is standard (see e.g. \cite{GELFANDMANIN} I.2.13), the third one is Dowker's theorem (\cite{DOWKER}; see also \cite{CWMILNOR}), and the fourth one
is our Theorem 1 of \cite{SRV}.

Let us consider $M \KK$ the associated pre-simplicial set, namely the restriction of $S \KK$
to the category $\mathbf{M}$. As expected, the topological spaces $L(M \KK)$ and $L(S \KK)$
are actually the same.

\begin{proposition} \label{prop:realhomeo}
The spaces $L( M\KK) $ and $L(S \KK)$ are homeomorphic.
\end{proposition}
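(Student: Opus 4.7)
The strategy is to exhibit mutually inverse continuous maps $\phi : \LL(M\KK) \to L(S\KK)$ and $\psi : L(S\KK) \to \LL(M\KK)$, built from the Eilenberg--Zilber decomposition in $S\KK$. Under the hypothesis that $\KK$ is ordered, $(M\KK)_n$ identifies with the set $(S\KK)_n^\#$ of strict chains $s_0 < \cdots < s_n$ in $\KK$; since morphisms in $\mathbf{M}$ are injective they preserve strict chains, so $M\KK$ embeds as a sub-pre-simplicial set of the restriction $S\KK_{|\mathbf{M}^{op}}$. The map $\phi$ is defined as the descent of the composite $\bigsqcup_n (M\KK)_n \times \nabla_n \into \bigsqcup_n (S\KK)_n \times \nabla_n \onto L(S\KK)$; this respects the face-only relations generating $\LL(M\KK)$ and is continuous by the universal property of the quotient topology.

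To construct $\psi$ we use the Eilenberg--Zilber property of $S\KK$: each $\alpha \in (S\KK)_n$ has a unique factorization $\alpha = \alpha^\# \, S_\alpha$ with $\alpha^\# \in (M\KK)_{m(\alpha)}$ and $S_\alpha \in \Hom_{\mathbf{E}}([n],[m(\alpha)])$. Set $\tilde\psi : \bigsqcup_n (S\KK)_n \times \nabla_n \to \LL(M\KK)$ on the summand $\{\alpha\}\times\nabla_n$ by $u \mapsto [\alpha^\#, \nabla(S_\alpha)(u)]$; continuity on each summand is immediate from the continuity of $\nabla(S_\alpha)$ and of the quotient map. The substantive step is to verify that $\tilde\psi$ factors through the equivalence relation defining $L(S\KK)$, which is generated by $(\gamma\sigma, c) \sim (\gamma, \nabla(\sigma)(c))$ for \emph{all} $\sigma \in \Hom_{\DDelta}([n],[m])$, including degeneracies. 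Write $\gamma = \gamma^\# T$ in Eilenberg--Zilber form and apply the epi-mono factorization $T\sigma = D'T'$ in $\DDelta$ with $D' \in \mathbf{M}$ and $T' \in \mathbf{E}$; since $\gamma^\#$ is a strict chain and $D'$ injective, $\gamma^\# D'$ is again a strict chain, hence non-degenerate, so $(\gamma\sigma)^\# = \gamma^\# D'$ and $S_{\gamma\sigma} = T'$. One then computes
\[
\tilde\psi(\gamma\sigma, c) = [\gamma^\# D', \nabla(T')(c)], \qquad \tilde\psi(\gamma, \nabla(\sigma)(c)) = [\gamma^\#, \nabla(D')\nabla(T')(c)],
\]
and these classes coincide in $\LL(M\KK)$ by the face-relation attached to $D' \in \mathbf{M}$. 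Hence $\tilde\psi$ descends to a continuous $\psi : L(S\KK) \to \LL(M\KK)$.

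The identities $\phi \circ \psi = \Id$ and $\psi \circ \phi = \Id$ are then routine: for $[\beta, v] \in \LL(M\KK)$ with $\beta \in (M\KK)_n$ one has $\beta^\# = \beta$ and $S_\beta = \Id$, whence $\psi\circ\phi([\beta, v]) = [\beta, v]$; for $[\alpha, u] \in L(S\KK)$ the degeneracy relation gives $[\alpha, u] = [\alpha^\# S_\alpha, u] = [\alpha^\#, \nabla(S_\alpha)(u)]$, whence $\phi\circ\psi([\alpha, u]) = [\alpha, u]$. Being continuous bijections of quotient spaces, $\phi$ and $\psi$ are inverse homeomorphisms. The principal obstacle is the bookkeeping in the well-definedness check for $\psi$ via the epi-mono factorization; once that is in place, all other steps are formal consequences of the Eilenberg--Zilber property and the universal property of the quotient topologies.
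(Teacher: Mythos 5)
Your proof is correct, but it takes a different route from the paper's. The paper's argument leans on the structural results already established for $L$: by Proposition \ref{prop:LXattachement} the natural map $\bigsqcup_n F_n^{\#} \times \nabla_n \to L(S\KK)$ is a quotient map, and the paper then observes that if $\alpha$ and $\alpha f$ are both non-degenerate (i.e.\ strict chains) then $f$ must be injective, so the identifications restricted to the non-degenerate part are exactly the face relations; hence $L(S\KK)$ and $\LL(M\KK)$ are literally the same quotient of $\bigsqcup_n F_n^{\#} \times \nabla_n$. You instead build explicit mutually inverse continuous maps $\phi$ and $\psi$, using the Eilenberg--Zilber factorization $\alpha = \alpha^{\#} S_\alpha$ and the epi--mono factorization $T\sigma = D'T'$ in $\DDelta$ to check that $\tilde\psi$ respects all generating relations of $L(S\KK)$, including degeneracies. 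The two arguments share the same crucial ordered-complex ingredient (an injective increasing map sends a strict chain to a strict chain, so the non-degenerate simplices of $S\KK$ form a sub-pre-simplicial set identified with $M\KK$ and only face maps relate them), but your construction is more self-contained: it does not need the quotient-map statement of Proposition \ref{prop:LXattachement}, only the universal property of quotient topologies and the uniqueness in the Eilenberg--Zilber lemma, at the cost of the bookkeeping in the well-definedness check for $\psi$. One small point of interpretation: the paper's phrase ``restriction of $S\KK$ to $\mathbf{M}$'' should, as you implicitly do, be read as the pre-simplicial set of \emph{strict} chains $(M\KK)_n = (S\KK)_n^{\#}$ (this is what the paper's own proof uses); with the literal reading keeping all weakly increasing tuples the statement would fail, so your explicit identification of $(M\KK)_n$ with $(S\KK)_n^{\#}$ at the outset is the right move. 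Also, your closing phrase ``being continuous bijections of quotient spaces'' is unnecessary: you have exhibited continuous two-sided inverses, which already makes $\phi$ and $\psi$ homeomorphisms.
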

\begin{proof}

Let us denote $F = S \KK$. Then $|F|$ is a quotient space of $\bigsqcup F_n^{\#} \times \Delta_n$,
equal to the image of $\bigsqcup F_n \times \Delta_n$ modulo the equivalence
relations $(\alpha f, a) \sim (\alpha ,\Delta(f)(a))$ for $f \in \DDelta$. Now let us assume that $(\alpha f,a) \sim (\alpha,\Delta(f)(a))$
for some $\alpha \in F_n^{\#}$, $\alpha f \in F_m^{\#}$ and $f \in \Hom_{\DDelta}([m],[n])$. We have
$\alpha = (s_0,\dots,s_n)$ and $\alpha f = (s_{f(0)},\dots,s_{f(m)})$. But $\alpha f \in F_m^{\#}$
means $s_{f(0)}<\dots<s_{f(m)}$ and this implies that $f$ is injective. Therefore $|S \KK| = \| M \KK \|
= (\bigsqcup F_n^{\#} \times \Delta_n)/\sim$ with $\sim$ generated by $(\alpha f, a) \sim (\alpha ,\Delta(f)(a))$
for $f \in \mathbf{M}$.

 The same argument shows that $L( M\KK) $ and $L(S \KK)$ are homeomorphic.

The (weak) geometric realization of $\KK$ is the topological union
of the $|\KK^{(n)}|$ where $\KK^{(n)}$ is the $n$-skeleton of $\KK$, defined as the
collection of elements of $\KK$ of cardinality at most $n+1$. From the cellular structure
of $\KK^{(n)}$ and since $\mathbf{M}$ is generated by the face maps we get
immediately that the identity map of $\bigsqcup_{k \leq n} F_k \times \Delta_k$ induces
an homeomorphism between $|\KK^{(n)}|$ and $\| (M \KK)^{(n)} \|$. From this we get a commutative
ladder of homeomorphisms which induces an homeomorphism between $|\KK|$ and $\| M \KK  \|$.

\end{proof}

\subsection{Finite products and equalizers}

Let $F,G$ be two simplicial sets, and $F \times G$ their product in the category $\sSet$. It is defined
(see e.g. \cite{FRPIC})
by $(F \times G)_n = F_n \times G_n$, and $(\alpha,\beta).\sigma = (\alpha.\sigma,\beta.\sigma)$
for $\sigma \in \DDelta$. From the product property we have natural maps $|F \times G| \to |F|\times |G|$
and $L(F \times G) \to L(F)\times L(G)$. The former is known to be an homeomorphism (\cite{FRPIC}
Proposition 4.3.15), and more generally the geometric realization functor preserves finite limits.
Here we show that the latter map $L(F \times G) \to L(F)\times L(G)$
is \emph{not} an homeomorphism in general.

Our example is the following one. We consider the simplicial complex $\PF^*([1])$ on the vertex set $S = [1] =  \{ 0, 1 \}$
given by the collection of all non-empty subsets of $S$. The simplicial set $F$
associated to it is the $1$-simplex, considered as a simplicial set, and $|F| = \Delta_1 = I$,
$L(F) = \nabla_1 = \mathfrak{M}$.

The simplicial set $F \times F$ can be described as $F_{\KK}$ for $\KK$ the
2-dimensional simplicial complex on the set $(F \times F)_0 = S \times S$ whose maximal simplices
are $\{ (0,0), (0,1), (1,1) \}$ and $\{ (0,0), (1,1), (1,0) \}$. An element of
$L(F \times F)$ can thus be described as an element of $L^1(\Omega,S\times S)$, or
equivalently by a partition of $\Omega$ (up to neglectability) into 4 parts
$\underline{A} = (A_{00},A_{01},A_{10},A_{11})$. The condition that it belongs to $L(F \times F)$
reads $A_{10} = \emptyset$ or $A_{01} = \emptyset$.

We now consider its image under the projection map $L(F \times F) \to \nabla_1 \times \nabla_1 = \mathfrak{M} \times \mathfrak{M}$, where elements of $\nabla_1$ are
identified with (classes of measurable) subsets of $\Omega$. It is easily checked that $\underline{A}$
is mapped to $(A_{10}\cup A_{11},A_{01}\cup A_{11})$, which is equal either to 
$( A_{11},A_{01}\cup A_{11})$ if $A_{10} = \emptyset$, or to
$(A_{10}\cup A_{11}, A_{11})$ if $A_{01} = \emptyset$. From this one gets that the image
of $L(F \times F)$ inside $L(F) \times L(F)$ is the
collection of pairs $(U,V) \in
\mathfrak{M}\times \mathfrak{M}$
such that either $U \subset V$
or $V \subset U$. Therefore the map is not surjective,
and this proves that $L : \sSet \to \Top$ does \emph{not} preserve finite products.

A positive property however is that $L$ preserves equalizers.
\begin{proposition}
Let $F,G$ be two simplicial sets,  $\varphi, \psi : F \to G$ two simplicial maps,
and $H \subset F$ their equalizer in $\sSet$. Then $L(H)$ is the equalizer of the
maps $L(\varphi), L(\psi) : L(F) \to L(G)$.
\end{proposition}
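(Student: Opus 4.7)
The plan is to combine the closed embedding $L(H) \hookrightarrow L(F)$ provided by Lemma \ref{lem:LGLFclosed} with the uniqueness of minimal representatives to identify $L(H)$ on the nose with the equalizer of $L(\varphi)$ and $L(\psi)$. Recall that $H$ is the simplicial subset of $F$ with $H_n = \{\alpha \in F_n \ | \ \varphi_n(\alpha) = \psi_n(\alpha)\}$. Lemma \ref{lem:LGLFclosed} embeds $L(H)$ as a closed subspace of $L(F)$ with the subspace topology; since the equalizer of $L(\varphi), L(\psi)$ in $\Top$ is also by definition a subspace of $L(F)$, the proposition reduces to the set-theoretic statement that the two subsets of $L(F)$ coincide.

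The easy inclusion $L(H) \subseteq \mathrm{Eq}(L(\varphi),L(\psi))$ is immediate from the definitions: any $x \in L(H)$ admits a representative $(\alpha,a)$ with $\alpha \in H_n$, so $\varphi_n(\alpha) = \psi_n(\alpha)$, whence $L(\varphi)(x) = L(\psi)(x)$. For the converse, I would take $x \in L(F)$ with $L(\varphi)(x) = L(\psi)(x)$ and consider its minimal representative $(\alpha,a)$, so $\alpha \in F_n^{\#}$ and $a \in \nabla_n^{\circ}$. The goal is then to show $\varphi_n(\alpha) = \psi_n(\alpha)$ in $G_n$.

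To achieve this, I would apply the Eilenberg--Zilber decomposition inside $G$: write $\varphi_n(\alpha) = \beta_\varphi . s_\varphi$ and $\psi_n(\alpha) = \beta_\psi . s_\psi$ with $\beta_\varphi, \beta_\psi$ non-degenerate and $s_\varphi, s_\psi \in \mathbf{E}$. Then $(\varphi_n(\alpha),a) \sim (\beta_\varphi, \nabla(s_\varphi)(a))$ in $L(G)$, and by Lemma \ref{lem:propsnabla}(1) the point $\nabla(s_\varphi)(a)$ is still interior, so this pair is the minimal representative of $L(\varphi)(x)$; likewise for $\psi$. The equality $L(\varphi)(x) = L(\psi)(x)$ together with the uniqueness of minimal representatives in $L(G)$ forces $\beta_\varphi = \beta_\psi$ (in particular their common domain $[m]$ agrees) and $\nabla(s_\varphi)(a) = \nabla(s_\psi)(a)$ in $\nabla_m$. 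Finally, Lemma \ref{lem:propsnabla}(3) applied to the interior point $a \in \nabla_n^{\circ}$ yields $s_\varphi = s_\psi$, and therefore $\varphi_n(\alpha) = \beta_\varphi . s_\varphi = \beta_\psi . s_\psi = \psi_n(\alpha)$, i.e.\ $\alpha \in H_n$ and $x \in L(H)$.

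The only subtle point, which I expect to be the main (though very mild) obstacle, is matching the hypotheses of Lemma \ref{lem:propsnabla}(3) correctly: the interiority required is that of the \emph{source} point $a \in \nabla_n^{\circ}$, which is exactly what the minimality of $(\alpha,a)$ in $L(F)$ furnishes, even though the \emph{equality} that one reads off from $L(G)$ lives in the lower-dimensional $\nabla_m$. Once this is observed, nothing further is needed: the topological identification is automatic, as both the equalizer topology and the topology inherited from $L(H) \hookrightarrow L(F)$ are the subspace topology on the common underlying set.
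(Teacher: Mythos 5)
Your proposal is correct and follows essentially the same route as the paper's proof: take the minimal representative $(\alpha,a)$ with $a$ interior, apply the Eilenberg--Zilber decompositions of $\varphi_n(\alpha)$ and $\psi_n(\alpha)$, invoke uniqueness of minimal representatives in $L(G)$, and conclude $s_\varphi = s_\psi$ via Lemma \ref{lem:propsnabla}~(3). Your explicit treatment of the topological identification (via the closed embedding of Lemma \ref{lem:LGLFclosed}) and your correct placement of the decompositions in $G$ rather than $F$ are welcome refinements of the same argument.
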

\begin{proof}
Clearly $L(H)$ is included inside this equalizer. Conversely, let us consider
one of its elements, and a representative $(\alpha,a)$ of it with $a$ an interior point.
By definition $(f(\alpha),a)$ and $(g(\alpha),a)$ are equivalent inside $L(F)$.
Let $\beta,\beta' \in F^{\#}$ and $S,S' \in \mathbf{E}$ such
that $f(\alpha) = \beta.S$, and $g(\alpha) = \beta'.S'$. We
have $(f\alpha),a) = (\beta.S,a) \sim (\beta,\nabla(S).a)$ and
similarly $(g(\alpha),a) \sim (\beta',\nabla(S').a)$.
Now by definition $(f(\alpha),a)$ and $(g(\alpha),a)$ are equivalent inside $L(F)$,
so their minimal representatives are the same, that is $\beta = \beta'$ and $\nabla(S).a = \nabla(S').a$.
Since $a$ is interior one gets $S = S'$ by Lemma \ref{lem:propsnabla} (3) and this
proves $f(\alpha) = \beta.S = \beta'.S' = g(\alpha)$, which proves the claim.
\end{proof}

Actually $L$ also \emph{reflects} equalizers, as is immediate by application of the natural
transformation $p : L \leadsto |\bullet |$ and the similar result in the classical case
(see e.g. \cite{FRPIC} Proposition 4.3.13).

\subsection{Standard variation}

There is a well-known standard variation on the definition of a $n$-simplex,
which is better behaved for some purposes, 
as $\lltriangle_n = \{ (x_1,\dots,x_n) \in [0,1] ; x_1\leq \dots \leq x_n \}$. More precisely,
it defines a functor
$\lltriangle : \DDelta \to \Top$, mapping $\sigma \in \DDelta$ to $\lltriangle(\sigma)$ defined by
$$\lltriangle(\sigma)(x_1,\dots,x_n) = \left( x_{1+\sup \sigma^{-1}(\{ 0,\dots,i-1 \})} \right)_{i=1,\dots,m }
$$
with the conventions $\sup \emptyset = - \infty$ and $x_{-\infty} = 0$.

 There is a
(bicontinuous) bijection between $\lltriangle_n$ and $\Delta_n = \{ (a_0,\dots,a_n) \in [0,1]^n ; a_0+\dots+ a_n = 1 \}$, given by
$x_0 = a_0$, $x_1 = a_0 + a_1$, \dots, $x_n = a_0+\dots+a_{n-1}$
and conversely $a_0 = x_0$, $a_i = x_i - x_{i-1}$ for $0 < i < n$, $a_n = 1-x_n$.
It is immediately checked that this bijection defines an isomorphism between the functors $\Delta$ and $\lltriangle$.

There is a similar variation for simplicial random variables. There is a cosimplicial space
$\ultriangle : \DDelta \to \Top$ corresponding to
$$
\ultriangle_n = \{ \underline{A} = (A_1,\dots, A_n) ; A_1 \subset A_2 \subset \dots \subset A_n \subset \Omega \}
$$
where the subsets of $\Omega$ are always understood up to subsets of measure $0$,
and the topology is for instance given by the metric $d(\underline{A}^1,\underline{A}^2) = \max_i \la(A_i^1 \Delta A_i^2)$ where $U \Delta V = (U \setminus V)\cup (V\setminus U)$.  For $\sigma \in \DDelta$,
$$\ultriangle(\sigma)(A_1,\dots,A_n) = \left( A_{1+\sup \sigma^{-1}(\{ 0,\dots,i-1 \})} \right)_{i=1,\dots,m }
$$
This functor is isomorphic to $\nabla$. To see this, recall that $\nabla_n = L^1(\Omega,[n])$
can be identified with
$\{ (B_0,\dots,B_n) ; \Omega = B_0\sqcup \dots \sqcup B_n \}$ (where set-theoretic equalities are always
understood up to neglectable subsets) through $B_i = f^{-1}(\{ i \})$ for $i \in [n]$
and $f \in \nabla_n$. Then, the correspondence is given
via $A_k = B_0\cup \dots \cup B_{k-1}$, and it is easily checked to be simplicial. Finally,
the probability-law maps $p'_n : \ultriangle_n \to \lltriangle_n$, $(A_1,\dots,A_n) \mapsto (\la(A_1),\dots,\la(A_n)) \}$ are continuous,
fit together, and define a natural transformation $\ultriangle \leadsto \lltriangle$. It is easily checked
that all this fits into a commutative diagram of functors as follows.
$$
\xymatrix{
\nabla \ar@{<->}[r] \ar[d] &\ultriangle \ar[d] \\
\Delta \ar@{<->}[r] & \lltriangle
}
$$

\section{Homotopy invariance}

In this section we investigate the relation between the functors $\Sing$ and $\Sing_{RV}$.
Recall that $p_n : \nabla_n \to \Delta_n$ denotes the probability-law map. For $Z$ a topological space,
$\Sing_{RV} Z$ is defined as the simplicial set with $n$-vertices the (continuous) maps $\nabla_n \to Z$,
and $\sigma \in \DDelta$ acts on $f \in (\Sing_{RV} Z)_n$ as
$f.\sigma = f \circ \nabla(\sigma)$. If $(Z,z_0)$ is a pointed space, $\Sing_{RV} Z$ is a pointed simplicial set,
with distinguished vertex the map $\nabla_0 \to Z$ mapping (the single element of) $\nabla_0$ to $z_0$, and $\Sing_{RV}$
clearly defines a functor $\Top_* \to \sSet_*$, right adjoint to $L : \sSet_* \to \Top_*$.
Also recall that the monomorphisms of $\sSet$ are the injective simplicial maps between the
underlying graded sets, so we call them injective morphisms.

\begin{proposition}
For every (pointed) topological space $Z$, the maps $f \mapsto f\circ p_n$, $(\Sing Z)_n \to (\Sing_{RV} Z)_n$ induce
an injective morphism of (pointed) simplicial sets $R_Z : \Sing Z \to \Sing_{RV} Z$. The collection $(R_Z)_Z$ defines a natural transformation
$\Sing \leadsto \Sing_{RV}$.
\end{proposition}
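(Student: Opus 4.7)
The plan is to verify four things in turn: continuity of each $R_Z$ at level $n$, simpliciality, injectivity, and naturality in $Z$; the pointed version follows immediately by inspection of the base points.

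First, for $f \in (\Sing Z)_n = \Hom_{\Top}(\Delta_n,Z)$, the map $f \circ p_n : \nabla_n \to Z$ is continuous since $p_n$ is continuous, so $R_Z$ is well-defined at the set level. To check that $R_Z$ is a simplicial map, fix $\sigma \in \Hom_{\DDelta}([n],[m])$ and $f \in (\Sing Z)_m$. Then
$$
R_Z(f.\sigma) = (f \circ \Delta(\sigma))\circ p_n = f \circ \big(\Delta(\sigma)\circ p_n\big) = f \circ \big(p_m \circ \nabla(\sigma)\big) = (f \circ p_m)\circ \nabla(\sigma) = R_Z(f).\sigma,
$$
where the crucial middle equality is naturality of $p : \nabla \leadsto \Delta$, which was established in Proposition \ref{prop:ptransfonat}.

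For injectivity, I would argue that $R_Z$ is level-wise injective because $p_n$ is surjective: indeed, the section $\sigma_n : \Delta_n \to \nabla_n$ of $p_n$ recalled in Section \ref{sect:presimpl} shows $p_n$ is onto, so the precomposition map $f \mapsto f \circ p_n$ on continuous maps to $Z$ is injective. Since each component $R_Z$ at level $n$ is injective, $R_Z$ is injective as a morphism of simplicial sets.

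Finally, for naturality in $Z$: given $\phi : Z \to Z'$ continuous, the diagram $R_{Z'} \circ \Sing(\phi) = \Sing_{RV}(\phi) \circ R_Z$ reduces at level $n$ to the identity $(\phi \circ f)\circ p_n = \phi\circ (f \circ p_n)$, which is associativity of composition. For the pointed variant, the base vertex of $\Sing Z$ sends the unique point of $\Delta_0$ to $z_0$, and post-composition with $p_0 : \nabla_0 \to \Delta_0$ (a map between one-point spaces) yields precisely the base vertex of $\Sing_{RV} Z$. The whole statement is essentially formal; the only substantive input is the previously proven fact that $p$ is a natural transformation, which is why I expect no significant obstacle.
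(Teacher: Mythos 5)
Your proposal is correct and follows essentially the same route as the paper: the simpliciality reduces to the identity $p_m \circ \nabla(\sigma) = \Delta(\sigma)\circ p_n$ from Proposition \ref{prop:ptransfonat}, injectivity follows from surjectivity of $p_n$, and naturality in $Z$ is associativity of composition. Your only addition is the explicit invocation of the section $\sigma_n$ to justify surjectivity of $p_n$, which the paper simply takes as immediate.
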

\begin{proof}
Let $f \in (\Sing Z)_n$ and $\sigma \in \Hom_{\DDelta}([m],[n]) = \Hom_{\DDelta^{op}}([n],[m])$.
We want to prove $R_Z(f.\sigma) = R_Z(f).\sigma$. We have
$R_Z(f.\sigma) = R_Z(f \circ \Delta(\sigma)) = f \circ \Delta(\sigma) \circ p_m$
and $R_Z(f).\sigma = R_Z(f) \circ \nabla(\sigma) = f \circ p_n \circ \nabla(\sigma)$.
In the proof of Proposition \ref{prop:ptransfonat} we have got that $p_n \circ \nabla(\sigma) = \Delta(\sigma) \circ p_m$
hence $R_Z(f.\sigma) = R_Z(f).\sigma$ and we have a morphism of simplicial sets.
Its injectivity is an immediate consequence of the surjectivity of the maps $p_n : \nabla_n \onto \Delta_n$. Finally,
if $Z$ is pointed, it
clearly maps base point to base point.

Let $\varphi : Z \to T$ a continuous map. We now want to prove that $R_T \circ \Sing(\varphi) = \Sing_{RV}(\varphi) \circ R_Z$. Let $f \in (\Sing Z)_n$. We have $\Sing(\varphi)(f) = \varphi \circ f$
and $R_T(\Sing(\varphi)(f)) = (\varphi \circ f) \circ p_n$
and $\Sing_{RV}(\varphi)(R_Z(f)) = \Sing_{RV}(\varphi)(f\circ p_n)= \varphi \circ (f \circ p_n)$
and this proves the claim.
\end{proof}

For $\KK$ and $\mathcal{L}$ two ordered simplicial
complexes, we have $F_{\KK} \times F_{\mathcal{L}} = F_{\mathcal{K}\times \mathcal{L}}$,
where $\mathcal{K}\times \mathcal{L}$ is the simplicial complex with vertices $(x,y)$ for $x \in \bigcup \mathcal{K}$
and $y \in \bigcup \mathcal{L}$ and simplices the $\{ (x_0,y_0),(x_1,y_1),\dots,(x_n,y_n) \}$
such that $x_0 \leq x_1 \leq \dots \leq x_n$, $y_0 \leq y_1 \leq \dots \leq y_n$. The total ordering chosen on the
vertices is for instance the lexicographic ordering -- or any other total ordering refining the diagonal partial ordering
$(a,b) \leq (a',b')$ iff $a \leq b$ and $a' \leq b'$.

In particular, let $\mathcal{I} = \PF^*([1])$ the simplicial complex whose geometric realization is $\Delta_1$.
Let $S$ be the (ordered) vertex set of $\KK$.
For $x \in S$ we set $x^- = (x,0)$, $x^+ = (x,1)$.
The simplices of $\mathcal{K} \times \mathcal{I}$ are the 
$$\{ x_0^-,x_1^-,\dots,x_{r-1}^-,x_{r}^+,x_{r+1}^+,\dots, x_{m}^+ \}$$
such that $x_0 \leq \dots \leq x_{m}$
and $\{ x_0,\dots,x_{m} \} \in \KK$.

We have natural maps $L(\KK)  \times \{ 0, 1 \} \to L(\mathcal{K} \times \mathcal{I})$ and  $L(\mathcal{K} \times \mathcal{I}) \to L(\mathcal{K})$ providing a factorization
of the first projection map $L(\KK)  \times \{ 0, 1 \} \to L(\KK)$. In the
forthcoming sections we are going to prove the following
property.
\begin{proposition} \label{prop:goodcylinder}
Let $\KK$ be a finite simplicial complex. Then $L(\KK \times \mathcal{I})$ is a cylinder
object for $L(\KK)$ inside Str\o m's (closed) model category structure.
\end{proposition}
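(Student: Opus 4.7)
The plan is to verify the two defining properties of a cylinder object in Str\o m's model structure: (i) the fold map $L(\KK \times \mathcal{I}) \to L(\KK)$ induced by the simplicial projection $\KK \times \mathcal{I} \to \KK$ is a homotopy equivalence, and (ii) the natural inclusion $L(\KK) \sqcup L(\KK) \hookrightarrow L(\KK \times \mathcal{I})$ from the top and bottom faces is a closed cofibration.

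For (i), I would invoke Theorem 1 of \cite{SRV}, which supplies homotopy equivalences $L(\KK) \to |\KK|$ and $L(\KK \times \mathcal{I}) \to |\KK \times \mathcal{I}|$ via the probability-law maps; these being natural in simplicial maps, they fit into a commutative square
\[
\xymatrix{
L(\KK \times \mathcal{I}) \ar[r] \ar[d] & L(\KK) \ar[d] \\
|\KK \times \mathcal{I}| \ar[r] & |\KK|
}
\]
with the bottom arrow the induced projection. Since $\KK$ is finite, $|\KK \times \mathcal{I}|$ is homeomorphic to $|\KK| \times I$ and the bottom map is the standard projection, a deformation retract. Three sides of the square being homotopy equivalences, 2-out-of-3 forces the top map to be one as well.

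For (ii), the plan is to lift a neighborhood-deformation-retract (NDR) representation from the base through the probability-law map, in the spirit of Proposition \ref{prop:cofibnabla}. The inclusion $|\KK| \times \partial I \hookrightarrow |\KK| \times I$ is a closed cofibration (being the product of $\mathrm{id}_{|\KK|}$ with the closed cofibration $\partial I \hookrightarrow I$), so it carries a concrete NDR pair $(u, H)$ with $u^{-1}(0) = |\KK| \times \partial I$, $H$ fixing this subspace pointwise and carrying $\{u < 1\}$ into it at time $1$. Because the probability-law map $p' \colon L(\KK \times \mathcal{I}) \to |\KK \times \mathcal{I}|$ pulls back subcomplex inclusions to the corresponding $L$-inclusions, one has $(p')^{-1}(|\KK| \times \partial I) = L(\KK) \sqcup L(\KK)$, so $u' := u \circ p'$ satisfies $(u')^{-1}(0) = L(\KK) \sqcup L(\KK)$. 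Using the homotopy lifting property of $p'$---which is a Hurewicz fibration by \cite{SRV} and enjoys the strengthened property from Proposition \ref{prop:HLPplus} that constant paths in the base lift to constant paths in the total space---I lift $H$ to $\tilde H \colon L(\KK \times \mathcal{I}) \times I \to L(\KK \times \mathcal{I})$ starting from the identity. The strengthening forces $\tilde H$ to fix $L(\KK) \sqcup L(\KK)$ pointwise, and the base-level property of $H$ at time $1$ ensures that $\tilde H$ carries $\{u' < 1\}$ into this subspace; thus $(u', \tilde H)$ is the desired NDR representation.

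The main obstacle is securing the strengthened HLP for $p'$ in its full generality, rather than only for the single-simplex case $p_n \colon \nabla_n \to \Delta_n$ of Proposition \ref{prop:HLPplus}. This should come from the general fibration-theoretic results of \cite{SRV} for simplicial complexes, the key point being that the path-lifting function can be chosen regularly so as to preserve constant paths (automatic for Hurewicz fibrations over sufficiently nice bases such as $|\KK \times \mathcal{I}|$); the finiteness hypothesis on $\KK$ serves to ensure that the per-simplex lifts can be glued compatibly.
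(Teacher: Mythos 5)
Your overall decomposition (projection is a homotopy equivalence; inclusion of the two ends is a closed cofibration) matches the paper's, which proves exactly these two items as Propositions \ref{prop:propequivalence} and \ref{prop:propcofib}, but your arguments for both halves are genuinely different. For the first half your argument is fine and more economical: naturality of the probability-law map gives a commutative square over the standard projection $|\KK\times\mathcal{I}|\cong|\KK|\times I\to|\KK|$, and 2-out-of-3 together with the homotopy equivalences of \cite{SRV} does the job for finite $\KK$. The paper instead builds an explicit deformation retraction of $L(\KK\times\mathcal{I})$ onto $L(\KK)\times\{0\}$ using the map $\mathbf{g}$; this buys a stronger statement and explicit homotopies that are reused in the pointed case (Proposition \ref{prop:goodcylinderpointed}).

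The second half is where you have a genuine gap. Your NDR-lifting scheme is structurally sound (pull back $u$, lift the base homotopy, use stationarity of the lift over stationary base points, and use that the preimage of $|\KK|\times\partial I$ is $L(\KK)\sqcup L(\KK)$), but it stands or falls on the input you yourself flag: that the probability-law map $L(\KK\times\mathcal{I})\to|\KK\times\mathcal{I}|$ is a Hurewicz fibration admitting a regular (constant-path-preserving) lifting scheme. That input is not available in this paper: Proposition \ref{prop:HLPplus} is stated only for $p_n:\nabla_n\to\Delta_n$, the introduction attributes to \cite{SRV} only a \emph{Serre} fibration statement for general $L(\KK)\to|\KK|$, and the Hurewicz property for realizations of simplicial sets is explicitly left as a conjecture; the one Hurewicz statement quoted from \cite{SRV} concerns $\partial\nabla_n\to\partial\Delta_n$. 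Your closing remark that finiteness lets "per-simplex lifts be glued compatibly" is not a proof: lifting functions over the closed simplices of $|\KK\times\mathcal{I}|$ do not glue naively, and making them compatible on intersections is precisely the hard point. (The regularity step itself is fine once a Hurewicz fibration is granted, since the base is compact metrizable.) The paper sidesteps all of this: Proposition \ref{prop:propcofib} constructs the retraction of $L(\KK\times\mathcal{I})\times I$ onto $(L(\KK)\times\{0,1\}\times I)\cup L(\KK\times\mathcal{I})\times\{0\}$ completely explicitly, via the flag map $D$ into $\ultriangle_{n+1}$, the interpolation map $J$ of Lemma \ref{lem:interpolationJ}, the maps $G_{\pm}$ and the planar retract $q$, so no fibration property of the law map is needed. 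To salvage your route you would have to either prove the regular-Hurewicz property of the law map for the finite complex $\KK\times\mathcal{I}$ (essentially an independent result of the strength of the paper's conjecture, restricted to complexes), or replace it by an explicit construction as the paper does.
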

Recall from \cite{STROM} and e.g. \cite{GOERSSJARDINE} that this means that
\begin{enumerate}
\item The natural projection map $\pi : L(\KK \times \mathcal{I}) \to L(\mathcal{K})$ is
an homotopy equivalence.
\item The natural inclusion map $L(\KK) \times \{ 0, 1 \} \to L(\KK \times \mathcal{I})$ is a closed
cofibration.
\end{enumerate}
Thus, this is a \emph{good cylinder} in the terminology of \cite{DWYER}. 

Assume that $S$ is well-ordered, denote $x_0$ the minimal vertex of $\KK$. We considered $\KK$ (or $F_{\KK}$) as based
at $x_0$. Consider $\{ 0 \}$ as a base vertex for $\mathcal{I}$. The coproduct $F_{\KK} \vee F_{\KK}$
of $F_{\KK}$ with itself inside $\sSet_*$ has the form $F_{\KK \vee \KK}$
with $\KK \vee \KK$ a simplicial complex on the vertex set obtained by dividing $S \times \{ 0,1 \}$ by the relation $x_0^+ = x_0^-$.
On this vertex set, $\KK \vee \KK$ has for simplices the 
$\{  x_{i_0}^-,x_{i_1}^-,\dots,x_{i_r}^- \}$
and the 
$\{  x_{i_0}^+,x_{i_1}^+,\dots,x_{i_r}^+ \}$
 for $x_{i_0} \leq \dots \leq x_{i_r}$ and $\{ x_{i_1},\dots,x_{i_r} \} \in \KK$.

Recall that the smash product $X \wedge Y$ is in $\Top_*$ the quotient of $X \times Y$ by the
image of $X \vee Y$. It is also the push-out

$$
\xymatrix{
X \vee Y \ar[r]\ar[d] & X \times Y\ar[d] \\
1 \ar[r] & X \wedge Y
}
$$
and this definition also applies in $\sSet_*$. Denote $\mathcal{I}^+$ the simplicial complex $\mathcal{I}$ together
with an isolated vertex, taken as basepoint (for instance $\mathcal{I}^+ = \{ \{ 0 \}, \{1 \}, \{ 0,1 \},\{ -1 \} \}$).

We have a similar statement for pointed sets (recall from \cite{STROM} that a map
between pointed spaces is a cofibration, fibration, or homotopy equivalence if and only if the underlying
map between non-pointed spaces is one).

\begin{proposition} \label{prop:goodcylinderpointed}
Let $\KK$ be a finite simplicial complex with minimal vertex $x_0$. Then $L(F_{\KK} \wedge F_{\mathcal{I}^+})$ is a good cylinder
object for $L(\KK)$ inside Str\o m's (closed) model category structure on $\Top_*$.
\end{proposition}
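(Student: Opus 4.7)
The plan is to reduce Proposition \ref{prop:goodcylinderpointed} to the unpointed Proposition \ref{prop:goodcylinder}, using the stated fact that a map in $\Top_*$ is a cofibration or homotopy equivalence in Str\o m's structure if and only if its underlying map in $\Top$ is one. The first ingredient is the standard identification of pointed simplicial sets
$$F_{\KK} \wedge F_{\mathcal{I}^+} \;\cong\; F_{\KK \times \mathcal{I}} \,/\, (\{x_0\} \times F_{\mathcal{I}})$$
obtained from the fact that smashing with $Y^+$ is the product with $Y$ followed by collapse of the basepoint slice $\{x_0\} \times Y$. Since $L$ is a left adjoint it preserves this colimit, so writing $X = L(F_{\KK})$, $Y = L(F_{\KK \times \mathcal{I}})$ and $A = L(\{x_0\} \times F_{\mathcal{I}}) \cong L(F_{\mathcal{I}})$, we have $L(F_{\KK} \wedge F_{\mathcal{I}^+}) = Y/A$. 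Note that $A \simeq |\mathcal{I}| = [0,1]$ is contractible, e.g. by Theorem \ref{theo:LwFw}.

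For the homotopy-equivalence part, I would first establish that $A \hookrightarrow Y$ is a closed cofibration. This is an instance of the general principle, used implicitly in Proposition \ref{prop:LXattachement} and Lemma \ref{lem:LGLFclosed} together with Proposition \ref{prop:cofibnabla}, that $L$ turns inclusions of simplicial subsets into closed cofibrations, by attaching cells one dimension at a time. Because $A$ is contractible and cofibrantly embedded, the classical Str\o m-model-category fact that the quotient by a contractible closed cofibration is a homotopy equivalence yields that $Y \to Y/A$ is a homotopy equivalence. Composing with the homotopy equivalence $Y \to X$ given by Proposition \ref{prop:goodcylinder} and applying two-out-of-three then provides the required homotopy equivalence $Y/A \to X$.

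For the closed-cofibration part, Proposition \ref{prop:goodcylinder} furnishes the closed cofibration $X \sqcup X \hookrightarrow Y$ corresponding to the two ends of the cylinder, and $A \hookrightarrow Y$ is closed-cofibrant as well. Their intersection inside $Y$ is the pair of basepoints $\{x_0\} \sqcup \{x_0\}$, included as closed cofibrations both in $X \sqcup X$ and in $A$ (via the cell structure of Proposition \ref{prop:LXattachement}). A standard gluing lemma for Str\o m cofibrations then shows that $(X \sqcup X) \cup A \hookrightarrow Y$ is a closed cofibration; passing to the quotient by $A$ identifies the two basepoints of $X \sqcup X$, producing precisely the wedge $X \vee X$ together with the desired closed cofibration $X \vee X \hookrightarrow Y/A$.

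The main obstacle will be the careful pointed-versus-unpointed bookkeeping: verifying the isomorphism $F_{\KK} \wedge F_{\mathcal{I}^+} \cong F_{\KK \times \mathcal{I}}/(\{x_0\} \times F_{\mathcal{I}})$ at the simplicial-set level, checking that $L$ applied to this quotient coincides with the topological quotient $Y/A$, and confirming that the closed-cofibration property descends to the quotient by $A$ as described. These are standard manipulations but must be written out carefully.
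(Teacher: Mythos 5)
Your proposal is correct in outline but takes a genuinely different route from the paper. Both arguments start from the same identification $L(F_{\KK}\wedge F_{\mathcal{I}^+})\cong L(\KK\times\mathcal{I})/L_0$ with $L_0=L^1(\Omega,\{0^-,0^+\})$ the slice $\{x_0\}\times\mathcal{I}$, obtained because $L$ is a left adjoint. From there the paper stays entirely explicit: in Proposition \ref{prop:propequivalence} it writes down a section $\sigma_0$ of the quotient map $\pi_0\colon L(\KK\times\mathcal{I})\to L(\KK\times\mathcal{I})/L_0$ and an explicit homotopy built from the measure-algebra map $\mathbf{g}$, showing $\pi_0$ is a homotopy equivalence, and in Proposition \ref{prop:propcofib} it observes that the retraction $H_{\KK}$ already constructed in the unpointed case satisfies $G_{\pm}(L_0\times I)\subset L_0$, hence descends to a retraction on the quotient. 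You instead use abstract cofibration technology: collapsing the contractible, cofibered $A=L_0$ is a homotopy equivalence, a union theorem for Str\o m cofibrations, and stability of closed cofibrations under pushout; the two-out-of-three step and the pushout descent to $X\vee X\hookrightarrow Y/A$ are fine. The one ingredient you should not treat as already available is that $L$ carries inclusions of simplicial subsets (here $\{x_0\}\times F_{\mathcal{I}}$, and also the pair of basepoints needed for Lillig's union theorem, whose hypotheses require $A\cap B\hookrightarrow Y$ cofibered in $Y$, not merely in $A$ and $B$) to closed cofibrations: the paper proves only the absolute skeletal attachment (Proposition \ref{prop:LXattachement}) and the closed embedding (Lemma \ref{lem:LGLFclosed}), so your step needs a relative version of Proposition \ref{prop:LXattachement} for the skeleta $G\cup F^{(n)}$, combined with Proposition \ref{prop:cofibnabla} and closure of cofibrations under pushouts and sequential colimits; this is true and routine, but must be written out (and, once available, it lets you bypass Lillig by noting that $(X\sqcup X)\cup A$ is itself $L$ of a simplicial subset). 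In short, the paper's hands-on construction buys independence from that relative lemma and produces explicit homotopies reused elsewhere, while your argument buys brevity and conceptual clarity at the cost of establishing it.
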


From these propositions, which are together equivalent to the the propositions \ref{prop:propequivalence} and \ref{prop:propcofib} proven below,
 we deduce the following Theorem. Recall (see e.g. \cite{GOERSSJARDINE} p. 73) that homotopy equivalence is a
well-defined concept between objects of a (closed) model category which are both fibrant and cofibrant.
 The strategy used in its proof has been suggested to me by D. Chataur.

\begin{theorem} \label{theo:eqSingSingRV} Let $Z$ be a topological space. Then the natural morphism
$\Sing(Z) \to \Sing_{RV}(Z)$ is an homotopy equivalence.
In particular the induced map $|\Sing(Z)| \to |\Sing_{RV}(Z)|$ is an homotopy equivalence.
\end{theorem}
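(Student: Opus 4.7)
The plan is to show that $R_Z : \Sing Z \to \Sing_{RV} Z$ is a weak equivalence in $\sSet$; since both are Kan complexes (the classical case for $\Sing Z$, and $\Sing_{RV} Z$ by Section~\ref{sect:kan}), this promotes $R_Z$ to a simplicial homotopy equivalence, and applying the functor $|\bullet|$ then yields the second statement. The key ingredients will be the continuous section $\sigma_n: \Delta_n \to \nabla_n$ of $p_n$ from \cite{SRV}, which also maps $\partial \Delta_n$ into $\partial \nabla_n$ (any vanishing coordinate of $\alpha$ translates to an absent value of $\sigma_n(\alpha)$), together with the explicit homotopy $H_n : I \times \nabla_n \to \nabla_n$ of Proposition~\ref{prop:commuteface} between $\Id_{\nabla_n}$ and $\sigma_n \circ p_n$; since $p_n \circ H_n(u,\cdot) = p_n$, this homotopy preserves $\partial \nabla_n$ throughout.

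First I identify $\pi_n(\Sing_{RV} Z, v)$ with the set of topological homotopy classes rel $\partial \nabla_n$ of maps $(\nabla_n, \partial \nabla_n) \to (Z, v)$. Under this identification $R_Z$ sends the class of $g : (\Delta_n, \partial \Delta_n) \to (Z,v)$ to the class of $g \circ p_n$. To show this is a bijection I use $\sigma_n$ and $H_n$. For surjectivity, given $f: (\nabla_n, \partial \nabla_n) \to (Z, v)$ I set $g = f \circ \sigma_n: (\Delta_n, \partial \Delta_n) \to (Z, v)$; then $R_Z([g]) = [f \circ \sigma_n \circ p_n] = [f]$, the last equality realized by the rel-boundary homotopy $(t, x) \mapsto f(H_n(t, x))$. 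For injectivity, if $g_1 \circ p_n$ and $g_2 \circ p_n$ are homotopic rel $\partial \nabla_n$ via $K : I \times \nabla_n \to Z$, then $g_i = g_i \circ p_n \circ \sigma_n$ shows that $g_1$ and $g_2$ are homotopic rel $\partial \Delta_n$ via $(t, y) \mapsto K(t, \sigma_n(y))$.

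The main obstacle is the first step, namely the topological description of $\pi_n(\Sing_{RV} Z, v)$; this is exactly where the good cylinder Propositions~\ref{prop:goodcylinder} and~\ref{prop:goodcylinderpointed} do their work. A simplicial homotopy $\Delta^n \times \Delta^1 \to \Sing_{RV} Z$ is, by adjunction, a map $L(\Delta^n \times \Delta^1) \to Z$; and with $\KK = \PF^*([n])$, $L(F_{\KK} \wedge F_{\mathcal{I}^+})$ is a good cylinder for $\nabla_n$ in Str\o m's pointed model structure. Since any two good cylinders induce the same homotopy relation in that structure, the simplicial homotopy classes of based $n$-simplices of $\Sing_{RV} Z$ coincide with topological homotopy classes rel $\partial \nabla_n$ of maps $(\nabla_n, \partial \nabla_n) \to (Z, v)$, justifying the identification. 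Combined with the bijection argument above, $R_Z$ induces an isomorphism on every $\pi_n$ at every basepoint, completing the proof.
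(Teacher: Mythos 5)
Your overall skeleton is the same as the paper's: both complexes are Kan, so it suffices to show $R_Z$ induces bijections on $\pi_0$ and on all homotopy groups, and the translation from simplicial to topological homotopy is to be done through the adjunction and the good-cylinder propositions. Your second half is also sound: the bijection between rel-boundary classes on $\Delta_n$ and on $\nabla_n$ via $\sigma_n$ and $H_n$ works, because $p_n\circ H_n(u,\cdot)=p_n$ and $\partial\nabla_n=p_n^{-1}(\partial\Delta_n)$, so $f\circ H_n$ is stationary at $v$ on $\partial\nabla_n$ whenever $f$ collapses $\partial\nabla_n$ to $v$; this is a nice concrete alternative to the paper's conclusion, which instead just quotes that the probability-law map $L(X)\to|X|$ is a (pointed) homotopy equivalence.

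The gap is precisely in the step you flag as the main obstacle. The combinatorial model of $\pi_n(\Sing_{RV}Z,v)$ you use consists of $n$-simplices with all faces at $v$, modulo simplicial homotopies that are constant on the \emph{whole} boundary, i.e.\ based maps from the quotient of $F_{\PF^*([n])}$ by its boundary subcomplex. Propositions \ref{prop:goodcylinder} and \ref{prop:goodcylinderpointed} do not control this relation: the first identifies the relation induced by $L(\KK\times\mathcal{I})$ with \emph{free} homotopy of maps $L(\KK)\to Z$, and the second identifies the relation induced by $L(F_{\KK}\wedge F_{\mathcal{I}^+})$ with \emph{vertex-based} homotopy (only $\{x_0\}\times\mathcal{I}$ is collapsed), not with homotopy rel $\partial\nabla_n$; moreover the quotient of $F_{\PF^*([n])}$ by its boundary is not of the form $F_{\KK}$ for a simplicial complex $\KK$, so neither proposition can be quoted for it. Hence the identification $\pi_n(\Sing_{RV}Z,v)\cong$ (classes of maps $(\nabla_n,\partial\nabla_n)\to(Z,v)$ up to homotopy rel $\partial\nabla_n$), which carries all the weight, is left unproved. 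The paper sidesteps exactly this by computing $\pi_n$ of a Kan complex as $\Hom_{\HosSet_*}(X,\bullet)$ with $X=F_{\partial\PF^*([n])}$ the simplicial sphere, a finite ordered simplicial complex pointed at a vertex, so that only vertex-based homotopies occur and Proposition \ref{prop:goodcylinderpointed} applies verbatim. To salvage your route you would need either a relative (under $\partial\nabla_n$) cylinder statement, or an explicit conversion of the adjoint map $L(\PF^*([n])\times\mathcal{I})\to Z$ of a simplicial homotopy vanishing on $L(\partial\PF^*([n])\times\mathcal{I})$ into a genuine homotopy $\nabla_n\times I\to Z$ constant on $\partial\nabla_n$ (for instance by precomposing with the map $\nabla_n\times I\to L(\PF^*([n])\times\mathcal{I})$ splitting each $A_j$ into pieces of measure $t\la(A_j)$ and $(1-t)\la(A_j)$ by means of $\check{\mathbf{g}}$); neither argument appears in your proposal, and as written the appeal to the good cylinders proves a different statement than the one you need.
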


\begin{proof}
Since $\Sing(Z)$ and $\Sing_{RV}(Z)$ are both Kan complexes (see Section \ref{sect:kan}),
they are both fibrant and cofibrant in the standard model structure on $\sSet$. It is then
 it is enough 
 (see e.g. \cite{GOERSSJARDINE}  Theorem 1.10)
 to prove that
the induced maps $\pi_0(\Sing(Z)) \to \pi_0(\Sing_{RV}(Z))$
and $\pi_n(\Sing(Z),z_0) \to \pi_n(\Sing_{RV}(Z),z_0)$ are bijections for every $n \geq 1$ and
vertex $z_0 \in \Sing(Z)_0 = \Sing_{RV}(Z)_0 = Z$. This is what we are going to prove.

We first claim that, for $X = F_{\KK}$ the simplicial set associated to a finite simplicial complex $\KK$, then
the natural map from
$\Hom(X,\Sing(Z)) \simeq \Hom(|X|,Z)$ to $\Hom(X,\Sing_{RV}(Z)) \simeq \Hom(L(X),Z)$
becomes a bijection up to a homotopy, where up to homotopy means the
genuine homotopy relation on $\Hom(|X|,Z)$ inside the topology category,
and on $\Hom(L(X),Z)$ means the image under $L$ of the homotopy relation of $\sSet$.
In other terms, this latter equivalence relation corresponds to the cylinder $L(\KK \times \mathcal{I})$. 

We prove this claim now. Since
by Proposition \ref{prop:goodcylinder} $L(\KK \times \mathcal{I})$ is a good cylinder for Str\o m's stucture, it induces (see e.g. \cite{GOERSSJARDINE} ch. II.1 Corollary 1.9) the same homotopy equivalence as the
standard cylinder $L(\KK) \times [0,1]$, which is the genuine homotopy relation. 
Therefore
the map we want to prove it is a bijection is nothing else than the natural map $[|\KK|,Z] \to [L(\KK),Z]$ inside the naive homotopy category. Since this map is induced by
the probability-law map $L(\KK)\to|\KK|$ which is an homotopy equivalence, it is a bijection, and this
proves the claim. 

An immediate consequence of this claim, taking for $\KK$ a point, is that the induced map 
$\pi_0(\Sing(Z)) \to \pi_0(\Sing_{RV}(Z))$ is a bijection.

Now assume that the vertex set of $\KK$ is linearly ordered with a minimal element $x_0$, and that some basepoint $z_0 \in Z$ is chosen. We then make the similar
claim that, for $X = F_{\KK}$ and $\KK$ finite, 
the natural map from
$\Hom_{\sSet_*}(X,\Sing(Z)) \simeq \Hom_{\Top_*}(|X|,Z)$ to $\Hom_{\sSet_*}(X,\Sing_{RV}(Z)) \simeq \Hom_{\Top_*}(L(X),Z)$
becomes a bijection up to a homotopy, where up to homotopy means the
genuine pointed homotopy relation on $\Hom_{\Top_*}(|X|,Z)$
inside the pointed topology category,
and on $\Hom_{\Top_*}(L(X),Z)$ means the image under $L$ of the homotopy relation of $\sSet_*$.
In other terms, this equivalence relation corresponds to the cylinder $L(F_{\KK} \wedge F_{\mathcal{I}})$. This claim results from
Proposition \ref{prop:goodcylinderpointed} and the same argument as before, because
two pointed spaces are homotopically equivalent if
and only if they are freely equivalent (see \cite{STROM} and the references there).

We apply this result to $\KK = \partial \PF^*([n])$ and $X = F_{\KK}$ equal to the boundary of the $n$-simplex, for $n \geq 1$, with base point $0$.
Recall that $\pi_n(F,0) = \Hom_{\HosSet_*}(X,F)$  whenever $F$ is a Kan complex (see e.g. \cite{FRIEDMAN} \S 9).
Then
the natural map
$$\pi_n(\Sing(Z),z_0) \simeq \Hom_{\HosSet_*}(X,\Sing(Z)) \to  \Hom_{\HosSet_*}(X,\Sing_{RV}(Z)) \simeq \pi_n(\Sing_{RV}(Z),z_0)$$ is identified with $\Hom_{\HoTop_*}(|X|,Z)\to \Hom_{\HoTop_*}(L(X),Z)$. Again because the
probability-law map induces an homotopy equivalence, this map is an isomorphism and we get
that the natural map
$$
\pi_n(\Sing(Z),z_0) \to \pi_n(\Sing_{RV}(Z),z_0)$$
is an isomorphism for all $n \geq 1$. This proves the theorem.

\end{proof}

\subsection{Preliminary tools}
Notice that the topology on $L(\KK)$ is induced by the metric topology
on $L^1(\Omega,S)$ when $\KK$ is finite, and $\KK \subset \PF^*(S)$. We shall
need the following lemma only when $S$ is finite, however the extra cost
for the full statement is minimal, so we state it in full generality.

\begin{lemma} \label{lem:continuiteLOS} Let $X$ be a space, $S$ a set. For $s \in S$ we define $p_s : L^1(\Omega,S) \to \nabla_1$
given by $f \mapsto f^{-1}(s)$. Then $F : X \to L^1(\Omega,S)$ is continuous iff $\forall s \in S \ p_s \circ F : X \to \nabla_1$ is continuous.
\end{lemma}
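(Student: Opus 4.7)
The plan is to prove the two implications separately, with the forward direction following by exhibiting each $p_s$ as a $1$-Lipschitz map on $L^1(\Omega,S)$, and the reverse direction requiring a truncation argument based on the fact that the essential image of any element of $L^1(\Omega,S)$ is at most countable.

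For the forward direction, observe that $p_s(f)\,\Delta\, p_s(g) = (f^{-1}(s)) \Delta (g^{-1}(s)) \subset \{t \in \Omega : f(t) \neq g(t)\}$, so
$$
d(p_s(f),p_s(g)) = \lambda(p_s(f)\,\Delta\, p_s(g)) \leq \lambda(\{t : f(t)\neq g(t)\}) = d(f,g).
$$
Hence $p_s$ is $1$-Lipschitz and therefore continuous, so if $F$ is continuous then each composite $p_s\circ F$ is as well.

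For the reverse direction, fix $x_0 \in X$ and $\varepsilon > 0$. Since $F(x_0) \in L^1(\Omega,S)$ and $\lambda(\Omega)=1$, the essential image $\{s \in S : \lambda(F(x_0)^{-1}(s))>0\}$ is (at most) countable; enumerate it as $s_1,s_2,\dots$ and choose $N$ so large that $\sum_{i>N}\lambda(F(x_0)^{-1}(s_i)) < \varepsilon/3$. By the assumed continuity of $p_{s_i}\circ F$ for $i=1,\dots,N$, there is a neighborhood $U$ of $x_0$ such that for every $x \in U$ and every $i\le N$,
$$
\lambda\bigl(F(x)^{-1}(s_i)\,\Delta\, F(x_0)^{-1}(s_i)\bigr) < \frac{\varepsilon}{3N}.
$$
Then for $x \in U$ I will decompose $\{t : F(x)(t)\neq F(x_0)(t)\}$ according to the value of $F(x_0)(t)$: the portion where $F(x_0)(t) \in \{s_1,\dots,s_N\}$ is contained in $\bigcup_{i\le N}\bigl(F(x)^{-1}(s_i)\,\Delta\, F(x_0)^{-1}(s_i)\bigr)$ and so has measure less than $\varepsilon/3$, while the portion where $F(x_0)(t) \notin \{s_1,\dots,s_N\}$ has measure at most $\varepsilon/3$ by the choice of $N$. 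Adding these gives $d(F(x),F(x_0)) < 2\varepsilon/3 < \varepsilon$, which yields the continuity of $F$ at $x_0$.

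The only delicate step is the handling of possibly infinite vertex sets $S$; the argument circumvents this by exploiting the countable essential support of a single element $F(x_0)$, which lets us reduce to a finite collection of indices $s_i$ before invoking the hypothesis. When $S$ is finite (the case that is actually used in the paper), the truncation step is vacuous and the proof collapses to the straightforward inequality $d(F(x),F(x_0)) \le \sum_{s \in S} d(p_s F(x), p_s F(x_0))$.
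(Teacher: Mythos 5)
Your proof is correct and takes essentially the same route as the paper: the forward direction via the $1$-Lipschitz bound $\lambda\bigl(f^{-1}(s)\,\Delta\, g^{-1}(s)\bigr)\le d(f,g)$, and the reverse direction by keeping a finite set of values carrying all but a small fraction of the mass of $F(x_0)$, using continuity of the finitely many corresponding $p_s\circ F$, and splitting $\{t: F(x)(t)\neq F(x_0)(t)\}$ according to the value of $F(x_0)(t)$. The only differences are cosmetic: your $\varepsilon/3$ bookkeeping and explicit enumeration of the countable essential image versus the paper's $\varepsilon/2$ split starting from $\sum_{s\in S}\lambda(f_0^{-1}(s))=1$.
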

\begin{proof}
Recall that the topology on $L^1(\Omega,S)$ is given by the metric $d(f,g) = \int_{\Omega}d(f(t),g(t)) \dd t$,
and the one on $\nabla_1 = \mathfrak{M}$ by the metric $d(U,V) = \la(U \Delta V)$. Since $\la(f^{-1}(s) \Delta g^{-1}(s))
\leq \int_{\Omega}d(f(t),g(t)) \dd t$ we get that each of the $p_s, s\in S$ is 1-Lipschitz hence
continuous. This implies that, if $F : X \to L^1(\Omega,S)$ is continuous, then so are the $p_s \circ F$, $s \in S$.

Conversely, let us choose $x_0 \in X$ and set $f_0 = F(x_0)$. We want to prove that $F$ is continuous
at $x_0$. Let $\eps > 0$. We want to prove that there exists an open neighborhood $U$ of $x_0$ such
that $x \in U \Rightarrow d(F(x),f_0) \leq \eps$.

Since $\sum_{s \in S} \la(f^{-1}(s)) = 1$, there exists $S_0 \subset S$ finite such that $\la(f^{-1}(S_0)) > 1 - \eps/2$. We set $m = \# S_0$. By continuity of $p_s \circ F$, for every $s \in S_0$ there exists
an open neighborhood $U_{s}$ of $x_0$ such that $x \in U_s$ implies that $f_x = F(x)$
satisfies $\la(f_x^{-1}(s) \Delta f_0^{-1}(s)) \leq \eps/2m$. Setting $U := \bigcap_{s \in S_0} U_s$
we get an  open neighborhood $U$ of $x_0$ such that, for every $x \in U$,
$$
d(f_0,f_x) = \sum_{s \in S} \la(f_0^{-1}(s) \setminus f_x^{-1}(s)) \leqslant\frac{\eps}{2} +
\sum_{s \in S_0} \la(f_0^{-1}(s) \setminus f_x^{-1}(s)) \leqslant \frac{\eps}{2}+ m \times \frac{\eps}{2m} = \eps
$$
and this proves the claim.
\end{proof}

Recall that $\ultriangle_n = \{ \underline{X} = (X_1,\dots,X_n) \ | \ X_1\subset X_2 \subset \dots \subset X_n \subset \Omega \}$. For $\underline{X} \in \ultriangle_n$, we set by convention $X_0 = \emptyset$ and $X_{n+1} = \Omega$. Also recall
from Section \ref{sect:prelimmeasure} the map
 $\mathbf{g} : \nabla_1 \times I \to \nabla_1$ such that $\mathbf{g}(A,0) = A$, $\la(\mathbf{g}(A,u)) = \la(A)(1-u)$ and $\mathbf{g}(A,u) \supset \mathbf{g}(A,v)$ whenever $u \leq v$.
  It moreover
satisfies that $\la(\mathbf{g}(E,u)\Delta \mathbf{g}(E,v)) \leqslant 4\la(E \Delta F) + |v-u|$.

\begin{lemma} \label{lem:interpolationJ} There exists a continuous \emph{interpolation map} $J : \ultriangle_n \times I \to \ultriangle_{1}$
such that, for $\underline{X} = (X_1,\dots,X_n) \in \ultriangle_n$, $c \in I$, and $k$ such that $\la(X_k) \leq c  < \la(X_{k+1})$, we have $X_k \subset J(\underline{X},c) \subset X_{k+1}$
and $\la(J(\underline{X},c)) = c$.
\end{lemma}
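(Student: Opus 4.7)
The plan is to interpolate through the layers of the filtration one at a time, using the retracting map $\check{\mathbf{g}}$ of Section \ref{sect:prelimmeasure} to carve from each layer a subset of prescribed measure. With the conventions $X_0 = \emptyset$ and $X_{n+1} = \Omega$, set $E_k = X_{k+1} \setminus X_k$ for $k \in \{0, \dots, n\}$, and define
$$t_k(\underline{X}, c) = \min\bigl(1,\ \max(0, c - \la(X_k))/\la(E_k)\bigr)$$
with the convention $t_k = 0$ whenever $\la(E_k) = 0$, and then
$$J(\underline{X}, c) = \bigcup_{k=0}^{n} \check{\mathbf{g}}\bigl(E_k, t_k(\underline{X}, c)\bigr) \in \mathfrak{M}.$$

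Verifying the combinatorial conclusion of the lemma is then a direct check: assume $\la(X_k) \leq c < \la(X_{k+1})$. For $j < k$ one has $c \geq \la(X_{j+1})$, so $t_j = 1$ and $\check{\mathbf{g}}(E_j, 1) = E_j$; for $j > k$ one has $c < \la(X_j)$, so $t_j = 0$ and $\check{\mathbf{g}}(E_j, 0) = \emptyset$; and for $j = k$ one has $t_k = (c - \la(X_k))/\la(E_k) \in [0,1)$, which produces a subset of $E_k$ of measure $c - \la(X_k)$. The $E_j$ being pairwise disjoint, this gives $J(\underline{X}, c) = X_k \cup \check{\mathbf{g}}(E_k, t_k)$ sandwiched between $X_k$ and $X_{k+1}$, with $\la(J) = \la(X_k) + t_k \la(E_k) = c$ by additivity.

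The hard part is joint continuity in $(\underline{X}, c)$, because the parameter $t_k$ is itself discontinuous: as $\la(E_k)$ transitions through $0$ the defining quotient becomes $0/0$ and $t_k$ may jump between $0$ and $1$. The key observation is that property~5 of $\mathbf{g}$, transported to $\check{\mathbf{g}}$ via $\check{\mathbf{g}}(A, u) = \mathbf{g}(A, 1-u)$, reads
$$d\bigl(\check{\mathbf{g}}(E, u), \check{\mathbf{g}}(F, v)\bigr) \leq 4\la(E \Delta F) + |u - v|\max(\la(E), \la(F)),$$
and the $\max$ factor absorbs exactly the singularity of $t_k$. Indeed, for $(\underline{X}^{(m)}, c^{(m)}) \to (\underline{X}, c)$ the first term tends to $0$ by continuity of $\underline{X} \mapsto E_k$; for the second, if $\la(E_k) > 0$ the quotient defining $t_k$ is locally continuous so $t_k^{(m)} \to t_k$, while if $\la(E_k) = 0$ the prefactor $\max(\la(E_k^{(m)}), \la(E_k))$ tends to $0$ and dominates the bounded discrepancy $|t_k^{(m)} - t_k| \leq 1$.

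Hence each $\check{\mathbf{g}}(E_k, t_k)$ depends continuously on $(\underline{X}, c)$, and the disjoint union $J$ does too. I expect this continuity argument at strata where layers collapse to be the only real obstacle; everything else is bookkeeping.
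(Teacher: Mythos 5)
Your construction produces, pointwise, exactly the same set as the paper's: for the unique $k$ with $\la(X_k)\leq c<\la(X_{k+1})$ your $J(\underline{X},c)=X_k\cup\check{\mathbf{g}}(E_k,t_k)$ coincides with the paper's $X_k\cup\mathbf{g}(X_{k+1}\setminus X_k,a)$, since $t_k=1-a$. Where you genuinely diverge is in the continuity proof. The paper fixes a point $(\underline{X}^0,c_0)$, separates the cases $\la(X^0_{k_0})<c_0$ and $\la(X^0_{k_0})=c_0$, and in the degenerate case (several consecutive $X^0_j$ of equal measure) sandwiches the nearby values $Y$ between sets $X_{r_0}^-\subset Y\subset X_{k_0}^+$ and concludes by a direct $\eps$-estimate; it only needs continuity and monotonicity of $\mathbf{g}$. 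You instead write a single closed formula valid at every $(\underline{X},c)$, with degenerate parameters $t_j\in\{0,1\}$ on the inactive layers, and you resolve the $0/0$ singularity of $t_j$ by invoking the refined form of property (5), $\la(\mathbf{g}(E,u)\Delta\mathbf{g}(F,v))\leq 4\la(E\Delta F)+|u-v|\max(\la(E),\la(F))$, whose $\max$ factor vanishes precisely when a layer collapses; together with continuity of $\underline{X}\mapsto E_k$ and of finite unions in $\mathfrak{M}$, this gives joint continuity of each summand with no case analysis on strata. Both arguments are correct; yours is shorter and concentrates the difficulty in an inequality the paper has already recorded in Section~\ref{sect:prelimmeasure}, while the paper's avoids the refined estimate at the cost of the local sandwich argument. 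One cosmetic slip: with your convention $t_j=0$ when $\la(E_j)=0$, the claim ``$t_j=1$ for $j<k$'' can fail for null layers, so $\check{\mathbf{g}}(E_j,t_j)$ is $\emptyset$ rather than $E_j$; this is harmless because such $E_j$ are null, hence equal to $\emptyset$ in the measure algebra, and neither the inclusion $X_k\subset J$ nor the measure computation is affected.
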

\begin{proof}
We define such a map $J$ by letting $J(\underline{X},c) = Y$ with $Y = X_k \cup \mathbf{g}(X_{k+1}\setminus X_k, a)$ with $k$ as in the statement,
and with $a \in I$ such that $\la(Y) = \la(X_k) + (\la(X_{k+1}) - \la(X_k))(1-a)=\la(X_{k+1})-a\la(X_{k+1}\setminus X_k)=c$, that is $a = (\la(X_{k+1})-c)/\la(X_{k+1}\setminus X_k)$.

It remains to prove that $J$ is a continuous map.
Let $\underline{X}^0 \in \ultriangle_n$ and $c_0 \in I$. Let $k_0$ be such that $\la(X^0_{k_0}) \leq c_0 < \la(X^0_{k_0+1})$. We separate two cases.

First assume $\la(X^0_{k_0}) < c_0$. Then, for $(\underline{X},c)$ close enough
to $(\underline{X}_0,x_0)$ we can assume that $\la(X_{k_0}) < c < \la(X_{k_0+1})$, and the
continuity of $J$ at $(\underline{X}^0,c_0)$ is an easy consequence of the continuity
of $\mathbf{g}$ and of the elementary set-theoretic operations.

We now assume $\la(X^0_{k_0}) = c_0$. Then we have some $r_0$ with $1 \leq r_0 \leq k_0$ such that
$\la(X_{r_0-1}^0)< \la(X_{r_0}^0) = \la(X_{r_0+1}^0) = \dots = \la(X_{k_0}^0) = c_0$.
Of course this implies $X_{r_0}^0 = X_{r_0+1}^0 = \dots = X_{k_0}^0$.
Then, for  $(\underline{X},c)$ close enough
to $(\underline{X}_0,x_0)$ we can assume that $\la(X_{r_0-1}) < c < \la(X_{k_0+1})$,
and we have $\la(X_k) \leq \la(Y) < \la(X_{k+1})$ for some $k \in [r_0-1,k_0]$.

Therefore $Y \subset X_{k_0}^+$ for some $X_{k_0}^+ \supset X_{k_0}$ with
either $X_{k_0}^+ = X_{k_0}$ or $\la(X_{k_0}^+) - \la(X_{k_0}) = c -  \la(X_{k_0})$. But then
$$
\la(X_{k_0}^+) - \la(X_{k_0}) \leq |c - \la(X_{k_0})| \leq |c-c_0|+|c_0 - \la(X_{k_0})|\leq |c-c_0|+
\la(X_{k_0}^0 \Delta X_{k_0})
$$
Similarly, $Y \supset X_{r_0}^-$ with $X_{r_0}^- \subset X_{r_0}$ satisfying
$$
\la(X_{r_0}) - \la(X_{r_0}^-) \leqslant |\la( X_{r_0}) - c |\leqslant |c-c_0|+
\la(X_{r_0}^0 \Delta X_{r_0})
$$

Assuming $|c-c_0| < \eps/6$ and $\la(X_k \Delta X_k^0) < \eps/6$ for every $k$, and setting 
$J(\underline{X},c) = Y$, $J(\underline{X}^0,c_0) = Y_0$ 
we get from $X_{r_0}^- \subset Y \subset X_{k_0}^+$ that

$$
\begin{array}{lcl}
\la(Y_0 \Delta Y) &\leq&
 \la(Y_0 \Delta X_{r_0}^-) + \la(Y_0 \Delta X_{k_0}^+) \\
&\leq & \la(X_{r_0}^0 \Delta X_{r_0}^-) + \la(X_{k_0}^0 \Delta X_{k_0}^+) \\
&\leq & (\la(X_{r_0}^0 \Delta X_{r_0}) + \la(X_{r_0} \Delta X_{r_0}^-)) + (\la(X_{k_0}^0 \Delta X_{k_0})+\la(X_{k_0} \Delta X_{k_0}^+))\\
& \leq & 2 |c-c_0| + 2 \la(X_{r_0}^0 \Delta X_{r_0}) + 2 \la(X_{k_0}^0 \Delta X_{k_0}) \\
& \leq & \eps
\end{array}$$
and this proves the continuity
of $J$ at any given $(\underline{X}^0,c_0)$

\end{proof}

\subsection{Proof of item (1) : homotopy equivalence}

We first prove that $L(\KK \times \mathcal{I}) \to L(\KK)$
is an homotopy equivalence, and the analogous statement for pointed complexes.

\begin{proposition} \label{prop:propequivalence}
Let $\KK$ be a simplicial complex over the vertex set $[n]$. The natural projection map $L(\KK \times \mathcal{I}) \to L(\KK)$
is an homotopy equivalence, and $L(\KK) \times \{ 0 \}$ is a deformation retract of $L(\KK \times \mathcal{I})$.
If $\KK$ is considered with $0$ as basepoint, the natural projection map $L(F_{\KK} \wedge F_{\mathcal{I}}) \to L(F_{\KK})=L(\KK)$
is an homotopy equivalence.
\end{proposition}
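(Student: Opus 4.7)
The plan is to construct an explicit strong deformation retract $H \colon L(\KK \times \mathcal{I}) \times I \to L(\KK \times \mathcal{I})$ from the identity onto $L(\KK) \times \{0\}$, which yields both the homotopy equivalence and the deformation-retract assertion. I identify an element of $L(\KK \times \mathcal{I})$ with a pair $(f, A)$ where $f \in L(\KK) \subset L^1(\Omega, [n])$ and $A \in \mathfrak{M}$ is the preimage of the ``$+$'' vertex in the second coordinate; membership in $L(\KK \times \mathcal{I})$ then amounts to $f(\Omega) \in \KK$ together with the ordering condition $\max f(\Omega \setminus A) \leq \min f(A)$ between essential images. The inclusion $\iota_0 \colon L(\KK) \hookrightarrow L(\KK \times \mathcal{I})$ sends $f$ to $(f, \emptyset)$. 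The naive attempt $H_t(f,A) = (f, \mathbf{g}(A,t))$ fails, because uniform shrinkage of $A$ can strand a small value of $f$ in the remaining $+$ side while pushing a larger value to the $-$ side; so the shrinkage must proceed from the smallest values of $f$ upward.

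To realize this, for each $(f, A)$ I form the increasing chain $\tilde X_k(f, A) = f^{-1}(\{n+1-k, \ldots, n\}) \cap A$ for $k = 1, \ldots, n+1$ (with $\tilde X_0 = \emptyset$), so $\tilde X_0 \subset \tilde X_1 \subset \cdots \subset \tilde X_{n+1} = A$. By Lemma \ref{lem:continuiteLOS}, each $f \mapsto f^{-1}(i)$ is continuous into $\mathfrak{M}$, and hence $(f, A) \mapsto \tilde X_k$ is continuous via the continuity of $\cap$ and $\cup$ on $\mathfrak{M}$. I then define $A_t = J(\underline{\tilde X}, (1-t)\la(A))$ using the interpolation map $J$ of Lemma \ref{lem:interpolationJ}, and $H((f,A), t) = (f, A_t)$, continuous in all arguments. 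At the endpoints one has $A_0 = A$ and $A_1 = \emptyset$, and when $A = \emptyset$ all $\tilde X_k$ vanish so $A_t = \emptyset$ for every $t$, making $H$ stationary on $L(\KK) \times \{0\}$.

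The crucial verification is that each $H_t(f, A)$ lies in $L(\KK \times \mathcal{I})$. For the $k$ with $\la(\tilde X_k) \leq (1-t)\la(A) \leq \la(\tilde X_{k+1})$, Lemma \ref{lem:interpolationJ} gives $\tilde X_k \subset A_t \subset \tilde X_{k+1}$, so the essential image of $f$ restricted to $A_t$ sits in $\{n-k, \ldots, n\}$ while that of $f$ restricted to $A \setminus A_t$ sits in $\{0, \ldots, n-k\}$. Combining this with $A_t \subset A$ (which yields $\min f(A_t) \geq \min f(A)$) and the original condition $\max f(\Omega \setminus A) \leq \min f(A)$, one gets $\max f(\Omega \setminus A_t) \leq \max(n-k, \min f(A)) \leq \min f(A_t)$, so the ordering survives; meanwhile the new essential image remains inside $f(\Omega) \in \KK$. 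This establishes $H$ as a strong deformation retract and, in particular, proves that the projection is a homotopy equivalence.

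For the pointed statement (which I interpret as concerning $L(F_\KK \wedge F_{\mathcal{I}^+}) \to L(\KK)$, matching the cylinder object of Proposition \ref{prop:goodcylinderpointed}), the space $L(F_\KK \wedge F_{\mathcal{I}^+})$ identifies with the quotient of $L(\KK \times \mathcal{I})$ by the subspace $L_{x_0} = \{(x_0, A) : A \in \mathfrak{M}\}$ of pairs whose first coordinate is the constant random variable $x_0$ (with an additional disjoint copy of $L(\KK)$ that is entirely collapsed to the basepoint). Since $H$ leaves the $f$-coordinate untouched, it preserves $L_{x_0}$, descends to the quotient, and yields the pointed deformation retract onto $L(\KK)$. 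The main obstacle throughout is the compatibility verification of the third paragraph: the ``top-down'' thresholding inherent in the chain $\tilde X_k$ is precisely what ensures that every intermediate $A_t$ obeys the simplex condition in $\KK \times \mathcal{I}$, and the measure-preserving control provided by Lemma \ref{lem:interpolationJ} is what keeps the whole construction continuous in $(f, A, t)$.
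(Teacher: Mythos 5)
Your proof is correct, but it follows a genuinely different route from the paper's. For the unpointed part, the paper writes a point of $L(\KK\times\mathcal{I})$ as $(A_j^\pm)_{j\in[n]}$ and collapses the plus parts into the minus parts one vertex at a time, on successive subintervals $[\frac{i}{n+1},\frac{i+1}{n+1}]$, using only the map $\mathbf{g}$ (the verification that intermediate stages stay in $L(\KK\times\mathcal{I})$ is then essentially automatic, since at each moment only one vertex is being split); your homotopy is instead a single global formula that shrinks the measure of the plus set $A$ linearly, using the interpolation map $J$ of Lemma \ref{lem:interpolationJ} applied to the filtration of $A$ by the upper level sets of $f$ -- a tool the paper only brings in later, for the cofibration half (Proposition \ref{prop:propcofib}). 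What your version buys is a uniform, manifestly strong deformation retraction and a reuse of the $J$-machinery; what it costs is exactly the verification you carry out in your third paragraph, that the ``top-down'' thresholding keeps every intermediate essential image a simplex of $\KK\times\mathcal{I}$ (this is the right direction: releasing the smallest $f$-values of $A$ first is what preserves the condition that minus-labelled vertices do not exceed plus-labelled ones), plus the small boundary checks that $A_0=A$ and $A_1=\emptyset$, which do hold with the lemma's conventions. In the pointed case the two arguments also diverge: the paper builds a second explicit homotopy (acting only on the $0$-th coordinate) to show that the quotient map $L(\KK\times\mathcal{I})\to L(\KK\times\mathcal{I})/L_0$ is a homotopy equivalence and then concludes by two-out-of-three, whereas you observe that your $H$ fixes the $f$-coordinate, hence preserves $L_0$ and descends to the quotient (using, implicitly, that taking the product with $I$ preserves quotient maps), giving the retraction there directly; your identification of $L(F_{\KK}\wedge F_{\mathcal{I}^+})$ with $L(\KK\times\mathcal{I})/L_0$, with the extra disjoint copy of $L(\KK)$ collapsed to the basepoint, matches the paper's. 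Both routes are sound; yours is arguably more economical in the pointed step, the paper's more elementary in the unpointed one.
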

\begin{proof}
Let us denote $\pi : L(\KK \times \mathcal{I}) \to L(\KK)$ the projection map. It is induced by the map
$[n] \times [1] \to [n]$, $i^{\pm} \mapsto i$. This map admits an inverse on the right $i \mapsto i^-$,
which induces a map $L^1(\Omega,[n]) \to L^1(\Omega,[n]\times [1])$ mapping $L(\KK)$ into $L(\KK \times \mathcal{I})$.
We claim that this map $\sigma : L(\KK) \to  L(\KK \times \mathcal{I})$ is an homotopy inverse
for $\pi$. Since $\pi \circ \sigma = \Id_{L(\KK)}$ this amounts to proving that $\sigma \circ \pi \sim \Id_{L(\KK \times \mathcal{I})}$, and this will prove at the same time that $L(\KK) \times \{ 0 \}$ is a deformation retract of $L(\KK \times \mathcal{I})$.

We now justify the claim by constructing an explicit homotopy $H : L(\KK \times \mathcal{I}) \times I \to L(\KK \times \mathcal{I})$. Let $\underline{A} = (A_j^{\pm})_{j \in [n]}$ and $t \in I$.
Write $t = \frac{1}{n+1}(i + u)$ for some $i \in [n]$ and $u \in [0,1]$,
and set $H(\underline{A},t) = \underline{B}$ with $B_j^{\pm} = A_j^{\pm}$ for $j > i$,
$B_j^{-} = A_j^{+}\cup A_j^-$ and $B_j^+ = \emptyset$ for $j < i$ and finally 
$B_i^- = A_i^- \cup \mathbf{g}(A_i^+,1-u)$, $B_i^+ = A_i^+ \setminus B_i^-$.

It is well-defined, as for $t = i/(n+1) = \frac{1}{n+1}(i + 0) = \frac{1}{n+1}((i-1) + 1)$
we have $B_i^- = A_i^- \cup \mathbf{g}(A_i^+,1-0) = A_i^-$ and
$B_{i-1}^- = A_{i-1}^- \cup A_{i-1}^+ =  A_{i-1}^- \cup \mathbf{g}(A_{i-1}^+,0)$. More concisely,
we have for every $j$ and $t$ the formulas $B_j^- = A_j^- \cup \mathbf{g}(A_j^+,\eta(j+1-(n+1)t))$,
$B_j^+ = A_j^+ \setminus B_j^-$ where $\eta : \R \to \R$ is defined by $\eta(x) = 0$ for $x \leq 0$, $\eta(x) = 1$ for $x \geq 1$ and $\eta(x) = x$ otherwise.

Clearly $H(\underline{A},0) = \underline{A}$, $H(\underline{A},1) = \sigma(\pi(\underline{A}))$
and we have $H(\underline{A},t) \in L(\KK \times \mathcal{I})$ for all $\underline{A},t$. Therefore 
there only remains to prove that $H$ is continuous.
By Lemma \ref{lem:continuiteLOS} this is then a consequence of the continuity of $\mathbf{g}$ and $\eta$,
and of the elementary set-theoretic operations.

We now consider the pointed case, with $0 \in [n]$ chosen for basepoint of $\KK$. By construction the
smash product
$F_{\KK} \wedge F_{\mathcal{I}}$ is a push-out hence a colimit. 
 Since $L$ is a left adjoint it commutes with colimits, and from this we
get that $L(F_{\KK} \wedge F_{\mathcal{I}^+})$ is the push-out of
$$
\xymatrix{
L(\KK \vee \mathcal{I}^+) \ar[r]\ar[d] & L( \KK \times \mathcal{I}^+)\ar[d] \\
\mbox{*} \ar[r] & \bullet
}
$$
that is the quotient of  $L( \KK \times \mathcal{I}^+)$ by the subspace 
$L(\KK \vee \mathcal{I}^+)$. Now notice that the projection map $L(\KK \times \mathcal{I}) \to L(\KK)$
factorizes through the maps $L(\KK \times \mathcal{I}) \to L(F_{\KK} \wedge F_{\mathcal{I}^+})$
and $L(F_{\KK} \wedge F_{\mathcal{I}^+}) \to L(\KK)$. We want to prove that the latter is an homotopy
equivalence. Since we just proved that the composite $L(\KK \times \mathcal{I}) \to L(\KK)$ is an homotopy
equivalence, it is equivalent to prove that the former map $L(\KK \times \mathcal{I}) \to L(F_{\KK} \wedge F_{\mathcal{I}^+})$ is an homotopy equivalence. Since $\KK \times \mathcal{I}^+
= (\mathcal{K} \times \mathcal{I}) \cup (\KK \times \{ * \})$ and
$\KK \times \{ * \} \subset \KK \vee \mathcal{I}^+ $
we get that this map is the same as the quotient map $\pi_0 : L(\KK \times \mathcal{I}) \to L(\KK \times \mathcal{I})/L_0$ with $L_0 = L^1(\Omega, \{ 0^-,0^+ \})$.

We consider the map $L(\KK \times \mathcal{I}) \to L(\KK \times \mathcal{I})$ mapping $\underline{A}^{\pm}$
to $\underline{B}^{\pm}$ with $B_i^{\pm} = A_i^{\pm}$ for $i > 0$,
$$
B_0^- 
= A_0^- \cup \mathbf{g}\left(A_0^+,\sum_{\stackrel{i \neq 0}{\eps = \pm }}\la(A_i^{\pm})\right)
= A_0^- \cup \mathbf{g}\left(A_0^+,1 - \la(A_0^- \cup A_0^+) \right)
$$
and $B_0^+ = A_0^+ \setminus B_0^-$. If $\underline{A}^{\pm} \in L_0$, that is if $i > 0 \Rightarrow A_i^{\pm} = \emptyset$, then $B_0^- = A_0^- \cup A_0^+ = \Omega$, $B_0^+ = \emptyset$. It follows that the map
factorizes through $L(\KK \times \mathcal{I})/L_0$ and induces a map $\sigma_0 : L(\KK \times \mathcal{I})/L_0 \to
L(\KK \times \mathcal{I})$.

Consider the map $H : L(\KK \times \mathcal{I}) \times I \to L(\KK \times \mathcal{I})$ mapping $(\underline{A}^{\pm},t)$ to $\underline{B}^{\pm}$ with $B_i^{\pm} = A_i^{\pm}$ for $i > 0$
and 
$$B_0^- = A_0^- \cup \mathbf{g}\left(A_0^+,t(1 - \la(A_0^- \cup A_0^+))+(1-t) \right)$$
It provides an homotopy between $\sigma_0 \circ \pi_0$ and the identity map. Now notice that
the image of $L_0 \times I$ is equal to $L_0$, so that it induces a map 
$L(\KK \times \mathcal{I})/L_0 \times I \to L(\KK \times \mathcal{I})/L_0$. This map defines an homotopy
between $\pi_0 \circ \sigma_0$ and the identity map. This proves that $\pi_0$ is an homotopy equivalence.

\end{proof}

\subsection{Proof of item (2) : cofibration property}

We assume that $\mathcal{K}$ is a subcomplex of $\PF^*([n])$, so that
$L(\KK \times \mathcal{I})$ (resp. $L(\KK \wedge \mathcal{I})$) is a closed subset of $L(\PF^*([n]) \times \mathcal{I}) \subset L^1(\Omega,[n] \times [1])$ (resp. $L(\PF^*([n]) \wedge \mathcal{I}) \subset L^1(\Omega,[n] \times [1])$). Precisely, it is made of the $f \in L(\PF^*([n]) \times \mathcal{I})$ (resp. $f \in L(\PF^*([n]) \wedge \mathcal{I})$) such that $\pi(f) \in L(\KK)$,
for $\pi : L(\KK \times \mathcal{I}) \to L(\KK)$ (resp. $\pi : L(\KK \wedge \mathcal{I}) \to L(\KK)$) the natural projection map, that is such that $\pi(f)(\Omega) \in \KK$. 
\begin{figure}
\begin{center}
\begin{tikzpicture}[scale=3]
\begin{scope}
\draw[dotted] (0,1) rectangle (1,0);
\draw[red,thick] (0,1) -- (0,0) -- (1,0) -- (1,1);
\fill[black] (0,0) circle (.03);
\fill[black] (1,0) circle (.03);
\draw (1,-.15) node {$(1,0)$};
\draw (0,-.15) node {$(0,0)$};
\draw (.2,.9) node {$\bullet$};
\draw (.2+.2,.9) node {$(y,u)$};
\draw (.5,2) node {$\times$};
\draw (.5,2+.2) node {$(\frac{1}{2},2)$};
\draw[dashed] (.5,2) -- (0,0.05/0.3);
\draw (0,0.05/0.3) node {$\bullet$};
\draw (0.22,0.05/0.3) node {$(y',u')$};
\end{scope}
\begin{scope}[shift={(1.5,0)}]
\fill[green] (0,0) -- (0,1) -- (.25,1) -- cycle;
\fill[yellow] (0,0) -- (.25,1) -- (.75,1) -- (1,0) -- cycle;
\fill[blue] (1,0) -- (.75,1) -- (1,1) -- cycle;
\draw[dotted] (0,1) rectangle (1,0);
\draw[red,thick] (0,1) -- (0,0) -- (1,0) -- (1,1);
\fill[black] (0,0) circle (.03);
\fill[black] (1,0) circle (.03);
\draw (1,-.15) node {$(1,0)$};
\draw (0,-.15) node {$(0,0)$};
\draw (.5,2) node {$\times$};
\draw (.5,2+.2) node {$(\frac{1}{2},2)$};
\draw[dashed] (.5,2) -- (0,0);
\draw[dashed] (.5,2) -- (1,0);
\fill[green] (1.3,0) rectangle (1.5,.2);
\draw (2.32,.1) node {$(y,u) \mapsto (0,\frac{u-4y}{1-2y}) \mbox{ for } u \geq 4y$};
\fill[yellow] (1.3,.3) rectangle (1.5,.5);
\draw (2.05,.4) node {$(y,u) \mapsto (\frac{1}{2}\frac{u-4y}{u-2},0)$};
\fill[blue] (1.3,.6) rectangle (1.5,.8);
\draw (2.53,.7) node {$(y,u) \mapsto (1,\frac{4-4y-u}{1-2y})  \mbox{ for } u \geq 4(1-y)$};
\end{scope}
\end{tikzpicture}
\end{center}

\caption{The geometric retract $q : [0,1]^2 \to \{ (y,u) ; u = 0 \mbox{ or } y \in \{ 0, 1\}\}$}
\label{fig:qretract}
\end{figure}
We now prove the following.

\begin{proposition}
\label{prop:propcofib}
 Let $\KK$ be a simplicial complex with vertex set $[n]$. Then the inclusion
map $L(\KK) \times \{ 0, 1 \} \to L(\KK \times \mathcal{I})$ is a closed cofibration.
For $\KK$ considered as a pointed simplicial complex, the inclusion map $L(\KK \vee \KK) \to L(F_{\KK} \wedge F_{\mathcal{I}^+})$
is also a closed cofibration.
\end{proposition}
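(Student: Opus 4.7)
The plan is to follow the template of the proof of Proposition~\ref{prop:cofibnabla}: construct a geometric retract at the level of scalar parameters, then lift it to the level of random variables using the mass-transfer map $\mathbf{g}$ from Section~\ref{sect:prelimmeasure}. Since $L(\KK) \times \{0, 1\}$ is closed in $L(\KK \times \mathcal{I})$ by (two applications of) Lemma~\ref{lem:LGLFclosed}, it suffices, by \cite{FRPIC} Proposition~A.4.1, to construct a retract
\[
r : L(\KK \times \mathcal{I}) \times I \longrightarrow \bigl(L(\KK \times \mathcal{I}) \times \{0\}\bigr) \cup \bigl((L(\KK) \times \{0, 1\}) \times I\bigr)
\]
of the natural inclusion going the other way.

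First, I would introduce the continuous \emph{fraction map} $y : L(\KK \times \mathcal{I}) \to [0, 1]$, $\underline{A}^{\pm} \mapsto \sum_{i} \la(A_i^+)$, whose level sets $\{y = 0\}$ and $\{y = 1\}$ recover exactly the two copies of $L(\KK)$ inside $L(\KK \times \mathcal{I})$. The target of $r$ then corresponds, at the level of the parameter pair $(y, u) \in [0, 1]^2$, to the three-sided subset $\{u = 0\} \cup \{y = 0\} \cup \{y = 1\}$, onto which the map $q$ displayed in Figure~\ref{fig:qretract} (projection from $(\tfrac{1}{2}, 2)$) provides an explicit scalar retract. Writing $q(y, u) = (y^*, u^*)$, I would define $r(\underline{A}^{\pm}, u) = (\underline{B}^{\pm}, u^*)$ where $\underline{B}^{\pm}$ is obtained from $\underline{A}^{\pm}$ by uniformly rescaling the partition of each $A_i^- \cup A_i^+$ between its two labels so that $y(\underline{B}^{\pm}) = y^*$. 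Concretely, when $y^* \leq y$ and $y > 0$ I would set $B_i^+ = \mathbf{g}(A_i^+, 1 - y^*/y)$ and $B_i^- = A_i^- \cup (A_i^+ \setminus B_i^+)$, and use the symmetric formula with the roles of $+$ and $-$ interchanged (and $1 - y$ in place of $y$) when $y^* \geq y$ and $y < 1$. The property $\mathbf{g}(A, 0) = A$ ensures the two formulas coincide on the diagonal $y^* = y$.

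Continuity of $r$ will then follow, via Lemma~\ref{lem:continuiteLOS}, from the continuity of $q$, of $\mathbf{g}$ (item (5) of Section~\ref{sect:prelimmeasure}), of the scalar $y$, and of elementary set-theoretic operations. Checking that $r$ restricts to the identity on the target is routine: if $u = 0$ then $q(y, 0) = (y, 0)$, forcing $y^* = y$ so that no mass is transferred; if $y \in \{0, 1\}$ then one of the two sides is already empty and the rescaling is trivial. Crucially, since mass is moved only \emph{within} each pair $(A_i^-, A_i^+)$, the image of $\underline{B}^{\pm}$ under the projection $\pi : L(\KK \times \mathcal{I}) \to L(\KK)$ coincides with $\pi(\underline{A}^{\pm})$, so $r$ preserves membership in $L(\KK \times \mathcal{I})$. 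The pointed version is then immediate: the same $r$ leaves the subspace $L_0 = L^1(\Omega, \{0^-, 0^+\})$ invariant (the transfer at $i = 0$ stays within $\{0^-, 0^+\}$), hence descends to a retract on the quotient $L(F_{\KK} \wedge F_{\mathcal{I}^+}) = L(\KK \times \mathcal{I}) / L_0$, witnessing that the pointed inclusion is a closed cofibration.

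The main obstacle I anticipate is the continuity of $r$ at the degenerate points where $y \to 0$ or $y \to 1$, since the prescription involves the ratio $y^*/y$ which becomes of the form $0/0$ in these limits. What saves the argument is that on the boundary $y = 0$ the retract $q$ already forces $y^* = 0$, so the transfer should degenerate to $B_i^+ = \emptyset$; the uniform Lipschitz-type estimate
\[
\la\bigl(\mathbf{g}(E, u) \Delta \mathbf{g}(F, v)\bigr) \leq 4 \la(E \Delta F) + |v - u|
\]
from item (5) of Section~\ref{sect:prelimmeasure} is exactly what is needed to bridge these limits continuously, in the same spirit as its use in the explicit homotopies of Section~\ref{sect:homotopn1}.
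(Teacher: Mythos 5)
Your overall scheme (reduce to building a retract of $L(\KK\times\mathcal{I})\times I$ onto $(L(\KK\times\mathcal{I})\times\{0\})\cup((L(\KK)\times\{0,1\})\times I)$, drive it by the scalar retract $q$ of Figure \ref{fig:qretract} applied to the plus-mass coordinate $y$ and the cylinder coordinate $u$, and handle the pointed case by checking invariance of $L_0$) is the same as the paper's. But the lifting step has a genuine gap: the fiberwise transfer $B_i^+=\mathbf{g}(A_i^+,1-y^*/y)$, $B_i^-=A_i^-\cup(A_i^+\setminus B_i^+)$, performed simultaneously for every $i$, does \emph{not} stay inside $L(\KK\times\mathcal{I})$. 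Membership in $L(\KK\times\mathcal{I})$ requires the essential image to be a face of $\KK\times\mathcal{I}$, i.e.\ a set of the form $\{x_0^-,\dots,x_{r-1}^-,x_r^+,\dots,x_m^+\}$ in which every index carrying a $-$ label is $\leq$ every index carrying a $+$ label; it is not enough that the projection to $L(\KK)$ be unchanged. Concretely, take $n=2$, $\KK=\PF^*([2])$ and $\underline{A}$ with essential image $\{0^-,1^+,2^+\}$ (a legitimate face). For $0<y^*<y$ your formula makes $B_1^-$, $B_1^+$, $B_2^-$, $B_2^+$ all non-null, so the essential image of $\underline{B}$ contains both $2^-$ and $1^+$, which is not a face of $\PF^*([2])\times\mathcal{I}$. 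Your sentence ``since mass is moved only within each pair $(A_i^-,A_i^+)$ \dots\ $r$ preserves membership in $L(\KK\times\mathcal{I})$'' is therefore false; it is exactly the phenomenon the paper isolates when showing that $L$ does not preserve products (the image of $L(F\times F)$ in $L(F)\times L(F)$ consists only of nested pairs).

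This is not a cosmetic defect but the actual difficulty of the proposition, and it is what the paper's machinery is built to overcome: one encodes $f$ as an increasing chain $D(f)\in\ultriangle_{n+1}$ (gluing the definitions over the maximal prisms $\mathcal{F}_r$), uses the interpolation map $J$ of Lemma \ref{lem:interpolationJ} to select a single set $J(D(f),c)$ interpolating between consecutive levels of that chain, and defines the transfers $G_{\pm}$ by flipping labels to $+$ (resp.\ $-$) only \emph{outside} (resp.\ \emph{inside}) $J(D(f),c)$, i.e.\ by sliding the unique $-/+$ cut along the filtration rather than rescaling each fiber independently. That construction keeps the essential image inside one prism simplex at a time, hence inside $L(\KK\times\mathcal{I})$, and still satisfies $\pi\circ G_{\pm}=\pi$, which is what makes the restriction to a general subcomplex $\KK$ and the passage to the quotient by $L_0$ work. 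To repair your proof you would need to replace the fiberwise rescaling by a transfer of this ``monotone along the vertex order'' type; the continuity worry you flag at $y\to 0,1$ is then handled, as in the paper, through the continuity of $D$ and $J$ rather than through a ratio $y^*/y$.
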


\begin{proof}
The space $L(\KK) \times \{0,1 \}$ is equal to $L(\KK \times \{ 0, 1 \} )$,
where we denote $\KK\times \{ 0, 1\} = \KK \sqcup \KK$
the disjoint union of two copies of $\KK$, with vertex sets $\{ x^+, x \in [n]\}$
and $\{ x^-, x \in [n]\}$.
Since $L(\KK \times \{ 0, 1 \} )$ is a closed subset of $L(\KK \times \mathcal{I})$, it is enough to
construct a retract of the cylinder $L(\KK \times \mathcal{I}) \times I$ onto $(L(\KK) \times \{ 0, 1 \} \times I) \cup L(\KK \times \mathcal{I}) \times \{ 0 \}$. We do this now.

We endow the space $\ultriangle_n = \{ \underline{X} = (X_1,\dots,X_n) ; \emptyset \subset X_1 \subset
X_2 \subset \dots \subset X_n \subset \Omega \}$ with the
metric $d(\underline{X}^1,\underline{X}^2) = \max_i \la(X_i^1 \Delta X_i^2)$,
and $L(\PF^*([n]) \times \mathcal{I})$ with the $L^1$ metric.

Let $\mathcal{F}_r = \PF^*(\{ 0^-,1^-,\dots,r^-,r^+,(r+1)^+,\dots, n^+ \}$ for $0 \leq r \leq n$. These are
the maximal simplices of $\PF^*([n]) \times \mathcal{I}$, and $L(\PF^*([n]) \times \mathcal{I}) = \bigcup_r L(\mathcal{F}_r)$.

We define a map $D : L(\PF^*([n]) \times \mathcal{I}) \to \ultriangle_{n+1}$ as
follows. To $f \in L(\mathcal{F}_r)$ we associate $\underline{X} \in \ultriangle_n$ such
that $X_i = f^{-1}([0^-,(i-1)^-])$ if $i \leq r+1$, and $X_i = f^{-1}([0^-,(i-2)^+])$ if $i > r+1$. We check that
these definitions agree on $L(\mathcal{F}_r) \cap L(\mathcal{F}_s)$ for $r < s$ as follows. If $i \leq r+1$ or $i > s+1$
the two possible definitions of $X_i$ clearly agree, so we can assume $r+1 < i \leq s+1$. According to
the definition on $L(\mathcal{F}_r)$ we have $X_i = f^{-1}([0^-,(i-2)^+])$,
whereas according to 
the definition on $L(\mathcal{F}_s)$ we have $X_i = f^{-1}([0^-,(i-1)^-])$. But since $f \in L(\PF^*([n]) \times \mathcal{I})$ we have $\# f(\Omega) \cap \{ (i-2)^+, (i-1)^- \} \leq 1$
hence $f^{-1}([0^-,(i-2)^+]) = f^{-1}([0^-,(i-1)^-])$ and this proves that these maps can be glued together.
Finally it is easily checked that the map $D$ is 2-Lipschitz on each $L(\mathcal{F}_r)$, as
we have, for $\underline{X} = D(f)$ and $\underline{X}^0 = D(f_0)$,
$$
\la(X_i \Delta X_i^0) = \la(X_i \setminus X_i^0) + \la(X_i^0 \setminus X_i)
= \int_{X_i} d(f(t),f_0(t))\dd t + \int_{X_i^0} d(f(t),f_0(t))\dd t
\leqslant 2 d(f,f_0)
$$
 and this proves that
it is continuous on its whole domain. Moreover it is easily checked that every restriction 
$D_{|L(\mathcal{F}_r)} : L(\mathcal{F}_r) \to \ultriangle_{n+1}$ is a bijection.

Let $\tau_{+} : [n] \times [1] \to [n]$ defined by $i^{\pm} \mapsto i^+$
and similarly $\tau_- : i^{\pm} \mapsto i^-$.
We then define two maps $G_{\pm} : L(\PF^*([n]) \times \mathcal{I}) \times I \to L(\PF^*([n]) \times \mathcal{I})$ as follows.
Let $(f,c) \in L(\PF^*([n]) \times \mathcal{I}) \times I$. We use the map $J$ of Lemma \ref{lem:interpolationJ}.

For $x\in J(D(f),u)$ we
set $G_{+}(f,u)(x) = f(x)$, and for $x \not\in J(D(f),u)$, we set $G_{+}(f,u)(x) = \tau_{+}(f(x))$.
For $x\not\in J(D(f),u)$ we
set $G_{-}(f,u)(x) = f(x)$, and for $x \in J(D(f),u)$, we set $G_{-}(f,u)(x) = \tau_{-}(f(x))$.

Let $(f_0,c_0) \in L(\PF^*([n]) \times \mathcal{I}) \times I$. We prove that $G_{\pm}$
is continuous at $(f_0,c_0)$. Let us consider $(f,c) \in L(\PF^*([n]) \times \mathcal{I}) \times I$
and set $J_0 = J(D(f_0),c_0)$, $J=J(D(f),c)$. Let $\eps > 0$, and assume $d(f,f_0) < \eps/3$. By continuity of $D$ and $J$ we know that,
for $(f,c)$ close enough to $(f_0,c_0)$, we have $\la(J \Delta J_0) \leq \eps/3$. Then, by definition
$$
d(G_{+}(f,c),G_{+}(f_0,c_0))\leqslant \la(J \Delta J_0)+\int_{J \cap J_0}d(f(t),f_0(t))\dd t +\int_{\Omega\setminus(J \cup J_0)}d(\tau_{+}(f(t)),\tau_{+}(f_0(t)))\dd t 
$$
Since $d(\tau_{+}(x),\tau_{+}(y)) \leq d(x,y)$ for every $x,y$, this implies
$d(G_{+}(f,c),G_{+}(f_0,c_0)) \leq \eps/3 + 2 d(f,f_0) \leq \eps$
and this proves the continuity of $G_{+}$ at $(f_0,c_0)$. The proof of continuity for $G_-$ is similar and left to the reader.

We extend the maps $\tau_{\pm}$ to $L^1(\Omega,[n] \times [1])$
by setting $\tau_{\pm} (f) = x \mapsto \tau_{\pm}(f(x))$. It is clear from the definitions
that 
\begin{itemize}
\item
$G_{+}(f,1) = f$ and $G_{+}(f,0) = \tau_{+}(f)$.
\item
$G_{-}(f,1) = \tau_-(f)$ and $G_{-}(f,0) = f$.
  \end{itemize}

We now construct a continuous map $L(\PF^*([n])\times \mathcal{I}) \times I \to L(\PF^*([n])\times \mathcal{I}) \times I$ as follows. To $(f,u) \in L(\PF^*([n])\times \mathcal{I}) \times I $
we associate $(p(f),u) = (\underline{x},y,u) \in (\lltriangle_n \times \lltriangle_1) \times \Delta_1$,
and then $q(y,u) = (y',u')$, where $q$ is the projection map of Figure \ref{fig:qretract}. Notice that, if $y = 1/2$, then $y' = 1/2$ and there exists
a component $X_k$ of $D(f)$ of measure $1/2$, which implies $J(D(f),1/2) = X_k$ and finally $G_{\pm}(f,1/2) = f$. 
Therefore, setting $H(f,u) = (G_+(f,y'),u')$ for $y \leq 1/2$
and   $H(f,u) = (G_-(f,y'),u')$ for $y > 1/2$, we get a well-defined continuous map
with the properties that :
\begin{itemize}
\item either $u' = 0$, in which case $H(f,u) \in  L(\PF^*([n])\times \mathcal{I}) \times \{ 0 \}$,
\item or $u' > 0$ in which case either $y' = 0$ and $H(f,u) = (G_+(f,0),u')= (\tau_+(f),u') \in L(\PF^*([n]\times \{ 1 \})$,
or $y' = 1$ and  $H(f,u) = (G_-(f,1),u') = (\tau_-(f),u')\in L(\PF^*([n]\times \{ 0 \})$.
\item if $u = 0$, then $u' = 0$ and $y' = y$, in which case $J(D(f),y) = X_k$ for some component $X_k$
of measure $y' = y$, and this yields $H(f,u) = f$.
\item if $y = 0$, then $f = \tau_+(f)$, $y' = y$ and $H(f,u) = \tau_+(f) = f$
\item if $y = 1$, then $f = \tau_-(f)$, $y' = y$ and $H(f,u) = \tau_-(f) = f$
\end{itemize}
This proves that $H$ provides a suitable retract, for $\KK = \PF^*([n])$. In the general case,
starting from the map $H : L(\PF^*([n])\times \mathcal{I}) \times I \to L(\PF^*([n])\times \mathcal{I}) \times I$
we constructed, we consider its restriction $H_{\KK}$ to $L(\KK\times \mathcal{I}) \times I$.
Starting from $(f,u) \in L(\KK\times \mathcal{I}) \times I$, that is
with $\pi(f)(\Omega) \in \KK$, we need to check that $\pi(G_{\pm}(f,y'))(\Omega) \in \KK$
so that $H_{\KK}$ provides a suitable retract. Since by construction $\pi \circ G_{\pm}(f,y') = f$, this is obvious.

We now consider the pointed case. From the decomposition $\KK \times \mathcal{I}^+ = (\KK \times
\mathcal{I}) \cup (\KK \times \{ * \})$ observed in the previous section, we can again identify
$L(F_{\KK} \wedge F_{\mathcal{I}^+})$ with the quotient space $L(\KK \times \mathcal{I})/L_0$ where $L_0 = L^1(\Omega,\{ 0^-,0^+\})= L(\PF^*(\{0^-,0^+ \}))$.

Then $L(\KK \vee \KK)$ can be identified with a closed subset of 
$L(\KK \times \mathcal{I})/L_0$
via $0^{\pm} \mapsto 0^-$, $i^{\pm} \mapsto i^{\pm}$ for $i > 0$, and it is 
precisely the image of $L(\KK \times \{ 0, 1\})$ under the composite of the natural maps
$L(\KK \times \{ 0, 1 \}) \to L(\KK \times \mathcal{I})$ and $L(\KK \times \mathcal{I}) \to L(\mathcal{K}\times
\mathcal{I})/L_0$.
 Now notice that, for every $f,u,x$ we have
$G_{\pm}(f,u)(x) \in \{ f(x),\tau_+(f(x)),\tau_-(f(x)) \}$ and this proves that $G_{\pm}(L_0 \times I) \subset L_0 $.
This implies that $H_{\KK}$ induces a map $\bar{H}_{\KK} : (L(\KK \times \mathcal{I})/L_0)\times I
\to (L(\KK \times \mathcal{I})/L_0)\times I$. It is easily checked that this
map is a retract onto $(L(\KK \vee \KK) \times I) \cup (L(\KK \times \mathcal{I})/L_0) \times \{ 0 \}$, and this proves the claim in the pointed case.

\end{proof}

\section{Quillen equivalences between $\sSet$ and $\Top$}
\label{sect:quillen}

In this section we want to prove that the functors $L$ and $|\bullet|$ from $\sSet$ to $\Top$
induce the same functor $\HosSet \to \HoTop$, where $\HosSet$ and $\HoTop$ denotes the homotopy
categories obtained from $\sSet$ and $\Top$ by formally inverting the weak homotopy equivalences.

In order to this, we use the classical (closed) model category structure on $\sSet$, and endow $\Top$
with a model category structure more flexible than Quillen's classical one, which nevertheless defines the same homotopy category. 
Such a model category has been introduced by M. Cole in \cite{COLE}.
Actually, Cole presents his construction as an intermediate between Quillen's and Str\o m's model category structures on $\Top$. Our point of view here is more that is provides a `flexibilization' of Quillen's model category structure on $\Top$, relevant
to the same homotopy category. Notice for instance that the subcategory
of cofibrant objects is Milnor's category $\mathcal{W}$ of spaces
which are \emph{homotopically equivalent} to CW-complexes, instead
of the category of genuine CW-complexes.

We recall the main characteristics of this model category structure below. With $\sSet$ and $\Top$ endowed with these structures of model categories, the main theorem
of this section is the following one.

\begin{theorem} \label{theo:QuillenEq} The functors $L : \sSet \to \Top$ and $\Sing_{RV} : \Top \to \sSet$ together provide a Quillen
equivalence. In particular they provide an equivalence of categories between their homotopy categories.
\end{theorem}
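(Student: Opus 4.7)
The plan is to reduce everything to the classical Quillen equivalence $(|\bullet|,\Sing)$ via the natural transformations $p:L\Rightarrow|\bullet|$ and $R:\Sing\Rightarrow\Sing_{RV}$.

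First I would verify the Quillen adjunction property. In Cole's structure every space is fibrant (Hurewicz fibrations to a point always exist), so it suffices to show that $\Sing_{RV}$ preserves fibrations and trivial fibrations. For the fibration part: by the $L\dashv\Sing_{RV}$ adjunction, $\Sing_{RV}(q)$ being a Kan fibration amounts to a given Hurewicz fibration $q:E\to B$ having the RLP with respect to $L(\Lambda^k[n]\hookrightarrow\Delta[n])=(V_n^k\hookrightarrow\nabla_n)$. By Proposition~\ref{prop:VnrNablanstrongdefretract} this inclusion is a strong deformation retract and a closed cofibration, hence a Str{\o}m-trivial cofibration, against which Hurewicz fibrations do lift. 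For trivial fibrations I would combine this with the preservation of weak equivalences by $\Sing_{RV}$: Theorem~\ref{theointro:eqSingSingRV} gives a natural homotopy equivalence $|\Sing X|\to|\Sing_{RV}X|$, and chaining with the classical counit $|\Sing X|\to X$ exhibits $|\Sing_{RV}(\bullet)|$ as naturally weakly equivalent to the identity; a 2-out-of-3 argument along this zigzag shows $\Sing_{RV}$ preserves weak equivalences.

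Next I would show that the unit $\eta_X:X\to\Sing_{RV}L(X)$ is a weak equivalence for every simplicial set $X$. Unwinding the two adjunctions on generating $n$-simplices produces a commutative square
\[
\begin{tikzcd}
X\ar[r,"\eta^{\mathrm{cl}}_X"]\ar[d,"\eta_X"'] & \Sing|X|\ar[d,"R_{|X|}"]\\
\Sing_{RV}L(X)\ar[r,"\Sing_{RV}(p_X)"'] & \Sing_{RV}|X|
\end{tikzcd}
\]
whose top row is a weak equivalence (classical unit followed by $R_{|X|}$, the latter by Theorem~\ref{theointro:eqSingSingRV}) and whose right column is a weak equivalence (since $p_X$ is a homotopy equivalence by Theorem~\ref{theo:LwFw} and $\Sing_{RV}$ preserves weak equivalences). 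Two-out-of-three yields the claim.

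Finally, for the counit $\epsilon_Y:L\Sing_{RV}(Y)\to Y$, I would first note that $L$ preserves weak equivalences: the naturality identity $p_Y\circ L(f)=|f|\circ p_X$, combined with $p$ being a natural weak equivalence and $|\bullet|$ preserving weak equivalences, gives it by 2-out-of-3. Then consider
\[
\begin{tikzcd}
L\Sing Y\ar[r,"L(R_Y)"]\ar[d,"p_{\Sing Y}"'] & L\Sing_{RV}(Y)\ar[d,"\epsilon_Y"]\\
|\Sing Y|\ar[r,"\epsilon^{\mathrm{cl}}_Y"'] & Y
\end{tikzcd}
\]
which commutes (both paths send $(g,a)\in(\Sing Y)_n\times\nabla_n$ to $g(p_n(a))$). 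The left column is a homotopy equivalence (Theorem~\ref{theo:LwFw}), the bottom is the classical counit (weak equivalence), and $L(R_Y)$ is a weak equivalence by the preceding. Two-out-of-three gives that $\epsilon_Y$ is a weak equivalence. Since every simplicial set is cofibrant and every space is fibrant in Cole's structure, the derived unit and counit agree with $\eta$ and $\epsilon$, and the Quillen equivalence follows. The main technical input, established earlier, is the strong deformation retract property of $V_n^k\subset\nabla_n$; everything else is formal diagram-chasing pivoting on the natural transformations $p$ and $R$ and the two comparison theorems.
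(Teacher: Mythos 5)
Your argument is correct, but it takes a genuinely different route from the paper on the adjunction side. The paper establishes the Quillen adjunction through the left adjoint: it shows that $L$ sends the generating cofibrations $\partial\Delta[n]\subset\Delta[n]$ and generating trivial cofibrations $\Lambda^r_n\subset\Delta[n]$ to Cole cofibrations and trivial Cole cofibrations (Propositions \ref{prop:cofibnablacole} and \ref{prop:cofibrationfaiblecornet}), which requires factoring $\partial\nabla_n\to\nabla_n$ and $V^r_n\to\nabla_n$ as a relative cell attachment followed by a homotopy equivalence; this is precisely where the section $\sigma_n$ and the face-compatible homotopy $H_n$ of Proposition \ref{prop:commuteface} are used. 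You instead dualize: $\Sing_{RV}$ preserves fibrations because Hurewicz fibrations lift against the Str\o m-trivial cofibrations $V^k_n\subset\nabla_n$, and preserves trivial fibrations because it also preserves weak equivalences, which you extract from the naturality of $|\Sing X|\to|\Sing_{RV}X|$ (Theorem \ref{theointro:eqSingSingRV}) by two-out-of-three. This bypasses the Cole-cofibration factorizations entirely and needs only Proposition \ref{prop:VnrNablanstrongdefretract} together with the (separate) proposition that $V^k_n\subset\nabla_n$ is a closed cofibration -- note your citation conflates these two statements -- plus Theorems \ref{theo:LwFw} and \ref{theointro:eqSingSingRV}; the price is that you lose the explicit structural statement that $L$ carries the generating (trivial) cofibrations to (trivial) Cole cofibrations, which is of independent interest in the paper. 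For the equivalence itself you verify that both the unit and the counit are weak equivalences (legitimate, since every simplicial set is cofibrant and every space is Cole-fibrant, so derived and underived unit/counit agree), whereas the paper invokes the criterion that $L$ reflects weak equivalences and the counit is a weak equivalence; your counit square is literally the paper's diagram, your unit square replaces the paper's reflection argument, and both hinge on the same inputs ($p$ a natural homotopy equivalence, $R$ a natural weak equivalence, and the classical Quillen equivalence), so this part is only a mild reorganization.
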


We shall make clear in Section \ref{sect:colemodel} that the classical functors
$|.|$ and $\Sing$ also provide a Quillen equivalence for the \emph{same}
model category structures.
The situation is thus summarized by the following diagram, where $p : L \to |.|$ and
$R : \Sing \to \Sing_{RV}$ are the obvious natural transformations induced by
the probability-law maps, so that the probability-law map somewhat
provides a \emph{natural transformation between the two Quillen equivalences}.

\begin{center}

\begin{tikzpicture}
\node (a) at (0,0) {$\sSet$};
\node (b) at (2.5,0) {$\Top$};
\draw[->] (a) to [bend left] node [scale=.7] (f) [below] {$|\bullet |$} (b);
\draw[->] (a) to [bend left=80] node [scale=.7] (F) [above] {$L$} (b);
\draw[-{Implies}, double distance =1.5 pt, shorten >= 2 pt,shorten <= 2 pt] (F) to node [scale=.7] [right] {$p$} (f);
\draw[->] (b) to [bend left] node [scale=.7] (g) [above] {$\Sing$} (a);
\draw[->] (b) to [bend left=80] node [scale=.7] (G) [below] {$\Sing_{RV}$} (a);
\draw[-{Implies}, double distance =1.5 pt, shorten >= 2 pt,shorten <= 2 pt] (g) to node [scale=.7] [right] {$R$} (G);

\end{tikzpicture}
\end{center}
\subsection{Cole's model category structure on $\Top$}
\label{sect:colemodel}

From \cite{COLE} Theorem 2.1 one gets that one can mix Quillen's classical model category structure on $\Top$,
which has for fibrations and weak equivalences the Serre fibrations and weak homotopy equivalences, respectively,
with Str\o m's structure, which has for fibrations and weak equivalences the Hurewicz fibrations and (strong)
homotopy equivalences, respectively. The resulting model category, which we call Cole's model category, has for fibrations the Hurewicz fibrations and for
weak equivalences the weak homotopy equivalences. In particular, it has the same
homotopy category as Quillen's original one, and moreover a Quillen adjunction (resp. equivalence) between $\sSet$ and $\Top$ for Quillen's original model category structure is a Quillen adjunction (resp. equivalence) between $\sSet$ and $\Top$ for Cole's
model category structure : indeed, every (trivial) cofibration for
Quillen's model category structure is in particular a (trivial) cofibration
for Cole's model category -- where we use the terminology that a (co)fibration is said to be a trivial
one if it is in addition a weak homotopy equivalence. 

More precisely, recall that the cofibrations of Quillen's classical model category structure on $\Top$
are (retracts of) CW-attachments (also called relative CW complexes). We call them Quillen-cofibrations.
The cofibrations in Cole's model category structure are the closed cofibrations (in the classical sense)
$f$ which can be written as $\xi \circ f'$ where $f'$ is a Quillen-cofibration and $\xi$ is an homotopy equivalence
(see \cite{COLE} Proposition 3.6 and Example 3.8). We call them Cole cofibrations.

The first application of this is for the classical adjunction $| \bullet | : \sSet \leftrightarrow \Top : \Sing$, which provides
a Quillen equivalence for Quillen's model category structure on $\Top$,
and therefore \emph{also} for Cole's model category structure.

\subsection{Proof of Theorem \ref{theo:QuillenEq}}

We shall prove the following strengthening of Proposition \ref{prop:cofibnabla}.

\begin{proposition} \label{prop:cofibnablacole} For every $n \geq 0$ the inclusion map $\partial \nabla_n \to  \nabla_n$ is a Cole cofibration.
\end{proposition}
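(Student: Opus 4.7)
The plan is to strengthen Proposition \ref{prop:cofibnabla} by producing an explicit factorization of the inclusion through the standard CW pair $(\Delta_n, \partial \Delta_n)$. By the characterization of Cole cofibrations recalled in Section \ref{sect:colemodel}, it suffices to exhibit a factorization $\partial \nabla_n \xrightarrow{f'} Z \xrightarrow{\xi} \nabla_n$ of the inclusion where $f'$ is a Quillen cofibration (a retract of a relative CW-complex) and $\xi$ is a homotopy equivalence; the fact that the composite is a closed cofibration is already given.

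The first ingredient I would extract is the section $\sigma_n : \Delta_n \to \nabla_n$ of $p_n$ together with the homotopy $H_n : \nabla_n \times I \to \nabla_n$ between $\sigma_n \circ p_n$ and $\mathrm{id}_{\nabla_n}$ provided by Proposition \ref{prop:commuteface}. A direct look at the explicit formula for $\sigma_n$ shows that $\sigma_n$ maps $\partial \Delta_n$ into $\partial \nabla_n$: a probability vector with a vanishing coordinate yields a random variable whose essential image misses the corresponding vertex. More importantly, $H_n$ preserves $\partial \nabla_n$: because $H_n$ commutes with each face map $D^i_{RV}$, for $f = D^i_{RV}(g)$ we have $H_n(u,f) = D^i_{RV} H_{n-1}(u,g) \in \mathrm{Im}(D^i_{RV}) \subset \partial \nabla_n$, and since $\partial \nabla_n$ is the union of the images of the face maps, the claim follows. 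Combined with $p_n \circ \sigma_n = \mathrm{id}$, this shows that $\sigma_n|_{\partial \Delta_n} : \partial \Delta_n \to \partial \nabla_n$ is a homotopy equivalence (with homotopy inverse $p_n|_{\partial \nabla_n}$), and that $\sigma_n$ itself is a homotopy equivalence.

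Next I would form the pushout
$$
Z := \partial \nabla_n \sqcup_{\partial \Delta_n} \Delta_n
$$
taken along $\sigma_n|_{\partial \Delta_n} : \partial \Delta_n \to \partial \nabla_n$ and the standard inclusion $\partial \Delta_n \hookrightarrow \Delta_n$. Since $\partial \Delta_n \hookrightarrow \Delta_n$ is a Quillen cofibration (an $n$-cell attachment) and Quillen cofibrations are stable under pushout, the resulting map $f' : \partial \nabla_n \hookrightarrow Z$ is a Quillen cofibration. The inclusion $\partial \nabla_n \hookrightarrow \nabla_n$ together with $\sigma_n : \Delta_n \to \nabla_n$ agree after composition with the two structural maps out of $\partial \Delta_n$ (by the very choice of the gluing map $\sigma_n|_{\partial \Delta_n}$), hence they induce a continuous map $\xi : Z \to \nabla_n$; the composite $\xi \circ f'$ is tautologically the original inclusion.

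Finally I would invoke the gluing theorem (\cite{FRPIC} Theorem A.4.12) to conclude that $\xi$ is a homotopy equivalence. The two pushout squares realizing $Z$ and $\nabla_n$ are related by a map of spans with vertical arrows $\mathrm{id}_{\partial \nabla_n}$, $\sigma_n|_{\partial \Delta_n}$, and $\sigma_n$, all of which are homotopy equivalences by the previous step, while the two maps being pushed out ($\partial \Delta_n \hookrightarrow \Delta_n$ and $\partial \nabla_n \hookrightarrow \nabla_n$) are both closed cofibrations. The main subtlety of the argument is concentrated in the homotopy-equivalence of the pair restriction $\sigma_n|_{\partial \Delta_n}$, which hinges entirely on the compatibility of $H_n$ with the face maps — precisely the feature built into Proposition \ref{prop:commuteface}.
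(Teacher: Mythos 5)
Your proposal is correct and follows essentially the same route as the paper: both factor the inclusion through the adjunction space $Z=(\partial\nabla_n)\cup_{\partial\sigma_n}\Delta_n$, using the section $\sigma_n$, the homotopy $H_n$ of Proposition \ref{prop:commuteface}, and the fact that $\sigma_n$, $p_n$, $H_n$ preserve boundaries, so that $\partial\nabla_n\to Z$ is a Quillen cofibration and the comparison map $Z\to\nabla_n$ is a homotopy equivalence. The only (harmless) difference is the final verification: you obtain the equivalence of $Z\to\nabla_n$ from the gluing theorem applied to the map of spans with vertical arrows $\mathrm{id}_{\partial\nabla_n}$, $\sigma_n|_{\partial\Delta_n}$, $\sigma_n$, whereas the paper composes with $p_n$, exhibits an explicit section and boundary-preserving homotopy on $Z$, and concludes by the 2-out-of-3 property.
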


Then recall that the $r$-th hook of the $n$-dimensional simplex
has for geometric realization
$$\Lambda_n^r = \{ (a_0,\dots,a_n) \in \Delta_n \ | \ \exists i\neq r \  \ a_i = 0 \} \subset \partial
\Delta_n \subset \Delta_n.
$$
Its image under the functor $L$ is
$$
V_n^r = \{ (A_0,\dots,A_n) \in \nabla_n \ | \  \exists i\neq r\  \ A_i = \emptyset \} \subset \partial
\nabla_n \subset \nabla_n.
$$
In this subsection we are going to prove the following. Recall
that, in any given model category, a (co)fibration is said to be a trivial
one if it is in addition a weak homotopy equivalence.

\begin{proposition} \label{prop:cofibrationfaiblecornet}
For $n >0$ the inclusion $V_n^r \subset \nabla_n$ is a trivial cofibration for Cole's model category
structure on $\Top$.
\end{proposition}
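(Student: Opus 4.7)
The strategy parallels Proposition~\ref{prop:cofibnabla}, with the horn $\Lambda_n^r\subset\Delta_n$ (resp.\ $V_n^r\subset\nabla_n$) playing the role of $\partial\Delta_n$ (resp.\ $\partial\nabla_n$). Two things must be established: that the inclusion is (a) a Cole cofibration, and (b) a weak homotopy equivalence. I will extract both from one geometric construction on $\Delta_n$ followed by a lift through Proposition~\ref{prop:HLPplus}.

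The first step is to construct, by the same sort of elementary geometric projection used in Proposition~\ref{prop:cofibnabla} (but aimed at $\Lambda_n^r$ rather than $\partial\Delta_n$), a retract
\[
f:\Delta_n\times I\longrightarrow(\Delta_n\times\{0\})\cup(\Lambda_n^r\times I)
\]
that is the identity on $(\Delta_n\times\{0\})\cup(\Lambda_n^r\times I)$. Concretely one projects from an apex placed above the $r$-th vertex of $\Delta_n$, exactly as in Figure~\ref{fig:cofibDelta}. I then lift $f$ to $\nabla_n$ by the same device as in Proposition~\ref{prop:cofibnabla}: set $H:=p_1\circ f\circ(p_n\times\Id):\nabla_n\times I\to\Delta_n$ with initial condition $h=\Id_{\nabla_n}$, and apply the HLP of $p_n$ (Proposition~\ref{prop:HLPplus}) to obtain $\tilde H:\nabla_n\times I\to\nabla_n$ with $\tilde H(\bullet,0)=\Id$. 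Since $H(x,\bullet)$ is constant for every $x\in V_n^r=p_n^{-1}(\Lambda_n^r)$ (because $f$ fixes $\Lambda_n^r\times I$), the ``moreover'' clause of Proposition~\ref{prop:HLPplus} forces $\tilde H(x,\bullet)=x$ on $V_n^r$. Pairing $\tilde H$ with $\pi_2\circ f\circ(p_n\times\Id)$ yields a retract
\[
\Psi:\nabla_n\times I\longrightarrow(\nabla_n\times\{0\})\cup(V_n^r\times I).
\]
This simultaneously (i) shows that $V_n^r\hookrightarrow\nabla_n$ is a classical closed cofibration, and (ii) after restricting $\tilde H$ at $t=1$, exhibits $V_n^r$ as a strong deformation retract of $\nabla_n$ (compare Proposition~\ref{prop:VnrNablanstrongdefretract}), so that the inclusion is in particular a weak equivalence, settling~(b).

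To upgrade this closed cofibration to a Cole cofibration, I need a factorization $\xi\circ f'$ with $f'$ a Quillen cofibration and $\xi$ a homotopy equivalence (\cite{COLE}, Proposition~3.6 and Example~3.8). The section $\sigma_n:\Delta_n\to\nabla_n$ of Section~\ref{sect:presimpl} sends $\Lambda_n^r$ into $V_n^r$ (a coordinate equal to $0$ on $\Delta_n$ forces the corresponding measurable piece to be empty in $\nabla_n$), and yields a commutative square
\[
\xymatrix{ \Lambda_n^r \ar@{^{(}->}[r] \ar[d]_{\sigma_n} & \Delta_n \ar[d]^{\sigma_n} \\ V_n^r \ar@{^{(}->}[r] & \nabla_n }
\]
whose vertical maps are sections of the probability-law homotopy equivalences (hence homotopy equivalences themselves, by Proposition~\ref{prop:commuteface}), and whose top row is a CW-inclusion, in particular a Quillen cofibration. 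A standard mapping-cylinder gluing then produces the required factorization of $V_n^r\hookrightarrow\nabla_n$; equivalently, since both $V_n^r$ and $\nabla_n$ are of CW-homotopy type via these equivalences, one may invoke Cole's characterization directly to conclude.

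The main obstacle is the lifting at the heart of the second paragraph, which rests on the careful use of Proposition~\ref{prop:HLPplus} and on the fact that $\Lambda_n^r$ is fixed pointwise by the chosen retraction on $\Delta_n$; once this is in hand, both~(a) and~(b) follow with only formal work. The Cole-cofibration upgrade is conceptually routine but one must explicitly set up a factorization through a Quillen cofibration, which the commutative square above supplies.
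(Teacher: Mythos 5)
Your overall architecture (closed cofibration, weak equivalence, factorization through a Quillen cofibration) is the right one, and your first step is a legitimate alternative to the paper's: lifting a retraction of $\Delta_n\times I$ onto $(\Delta_n\times\{0\})\cup(\Lambda_n^r\times I)$ through $p_n$ via Proposition \ref{prop:HLPplus} does prove that $V_n^r=p_n^{-1}(\Lambda_n^r)\subset\nabla_n$ is a closed cofibration, exactly as in the proof of Proposition \ref{prop:cofibnabla} (the paper instead writes down an explicit retraction using $\mathbf{g}$). Two corrections there, one minor and one genuine. Minor: the apex must sit above (or beyond) the missing $r$-th face, not above the $r$-th vertex -- already for $n=1$, $r=0$ the rays from a point over vertex $r$ leave the prism through the forbidden wall. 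Genuine: your claim that restricting $\tilde H$ at $t=1$ exhibits $V_n^r$ as a strong deformation retract is false. The time-one map of the prism retraction sends points whose $p_n$-image is close to the barycentre into the slice $\Delta_n\times\{0\}$ with \emph{interior} spatial coordinate, so $p_n(\tilde H(x,1))\notin\Lambda_n^r$ and hence $\tilde H(x,1)\notin V_n^r$ in general; the weak-equivalence half of ``trivial cofibration'' does not follow from this construction as written. It is easily repaired inside your own framework: lift instead (again by Proposition \ref{prop:HLPplus}) the straight-line homotopy from $\Id_{\Delta_n}$ to the retraction $q:\Delta_n\to\Lambda_n^r$ of Section \ref{sect:kan}, which is constant on $\Lambda_n^r$; since $p_n\circ\tilde H(\cdot,1)=q\circ p_n$ lands in $\Lambda_n^r$ and $V_n^r=p_n^{-1}(\Lambda_n^r)$, that lift is a strong deformation retraction -- this is what the paper proves directly in Proposition \ref{prop:VnrNablanstrongdefretract}.

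The second genuine gap is the Cole upgrade. The commutative square is the correct starting point (and $\sigma_n(\Lambda_n^r)\subset V_n^r$ is right), but ``a standard mapping-cylinder gluing'' is precisely where the work lies. Concretely one must form $Z=V_n^r\cup_{\Lambda_n^r}\Delta_n$ (attach the missing $(n-1)$-cell and the $n$-cell to $V_n^r$ along $\sigma_n$), which gives the Quillen cofibration $V_n^r\to Z$, and then \emph{prove} that the induced map $Z\to\nabla_n$ is a homotopy equivalence. This is where the paper spends its effort: it composes with $p_n$, exhibits a section, and glues the homotopy $H_n$ of Proposition \ref{prop:commuteface} with the identity on the attached cells, using the specific properties $H_n(t,\sigma_n(\underline a))=\sigma_n(\underline a)$ and $p_n\circ H_n(u,\cdot)=p_n$ (the latter is what makes $H_n$ preserve $V_n^r$), and concludes by 2-out-of-3. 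Alternatively one can invoke the gluing theorem (\cite{FRPIC} Theorem A.4.12) to compare the pushouts of $V_n^r\leftarrow\Lambda_n^r\to\Delta_n$ and $V_n^r\leftarrow V_n^r\to\nabla_n$, but then you must check that $\sigma_n|:\Lambda_n^r\to V_n^r$ is a homotopy equivalence (again via $H_n$ preserving $V_n^r$, or contractibility of both sides) and that both inclusions are closed cofibrations; none of this is supplied by the square alone. Likewise the shortcut ``both spaces have CW homotopy type, so invoke Cole's characterization directly'' is not immediate: the characterization used in the paper (\cite{COLE}, Proposition 3.6) demands an actual factorization $\xi\circ f'$, and extracting one from homotopy-type hypotheses requires exactly such a gluing argument (or a citation of the corresponding statement about the mixed model structure), so it cannot replace the missing verification.
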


From these two propositions and our previous results we finally can prove Theorem \ref{theo:QuillenEq}.

\begin{proof}

Recall (see e.g. \cite{HOVEY} Theorem 3.6.5) that the category of simplicial sets is a finitely generated closed
model category with generating cofibrations the inclusions of simplicial complexes $\partial \PF^*([n]) \subset
\PF^*([n])$ for $n \geq 0$ and generating trivial cofibrations the inclusion of
simplicial complexes $\Lambda_n^r \subset \PF^*([n])$ for $n > 0$ et $r \in [n]$.
By the above propositions we know that their image under $L$ are cofibrations and trivial cofibrations, respectively.
Therefore (\cite{HOVEY} Lemma 2.1.20) $L : \sSet \to \Top$ and $\Sing_{RV} : \Top \to \sSet$ together provide a Quillen
adjunction. In order to prove that they provide a Quillen equivalence, we need to prove
that $L$
maps cofibrations to cofibrations and
trivial cofibrations to trivial cofibrations
for Cole's model category structure
hence provides a Quillen adjunction. 
Since every object of $\sSet$ is cofibrant and every object of $\Top$ is fibrant for Cole's model category (the constant maps are Hurewicz fibrations), it remains to prove (\cite{HOVEY} Corollary 1.3.16)
that $L$ reflects weak equivalences and that $L(\Sing_{RV} X) \to X$ is always a weak equivalence.

Let $f : F \to G$ be such a weak equivalence in $\sSet$. By definition this is equivalent to asking for
the induced map $|f| : |F| \to |G|$ to be one. Applying the natural transformation $p : L \leadsto |\bullet|$,
we get the commutative square
$$
\xymatrix{
|F| \ar[r]^{|f|} & |G| \\
L(F) \ar[r]^{L(f)}\ar[u]_{p_F} & L(G)\ar[u]_{p_G} \\
}
$$
and we know by Theorem \ref{theo:LwFw} that $p_F$ and $p_G$ are homotopy equivalences. Therefore
$L(f)$ is a weak equivalence iff $|f|$ is a weak equivalence.

We finally need to prove that $L(Sing_{RV} X) \to X$ is a weak equivalence for every $X \in \Top$. We consider the following diagram, where the horizontal maps are induced from the weak equivalence $\Sing X \to \Sing_{RV}X$ of Theorem \ref{theo:eqSingSingRV}, the vertical maps by the natural transformation $p$, and the diagonal ones are the obvious co-unit maps.

$$
\xymatrix{
L(\Sing X) \ar[r] \ar[d] & L(\Sing_{RV} X) \ar[d] \ar[ddr] \\
|\Sing X| \ar[r] \ar[drr] & |\Sing_{RV} X | \\
& & X
}
$$
Since $|\Sing X| \to X$ is a classical weak equivalence and since $L(\Sing X) \to |\Sing X|$ is also
one by Theorem \ref{theo:LwFw}, we get that $L(\Sing X) \to X$ is one. Therefore the composite of the
maps $L(\Sing X) \to L(\Sing_{RV} X)$ and $L(\Sing_{RV} X) \to X$ is a weak-equivalence. Since
$\Sing X \to \Sing_{RV} X$ is a weak equivalence by Theorem \ref{theo:eqSingSingRV} so is $L(\Sing X) \to L(\Sing_{RV} X)$ as we just
showed, and this has for consequence that $L(\Sing_{RV} X) \to X$ is a weak-equivalence, which concludes the proof.

\end{proof}

\subsection{Proof of Proposition \ref{prop:cofibnablacole}}

Since $\partial \nabla_0 = \emptyset$ the statement is trivial for $n=0$ and
we can assume $n\geq 1$.
We already know from Proposition \ref{prop:cofibnabla} that the inclusion map $\partial \nabla_n \to \nabla_n$
is a cofibration. We prove here that it can be written as the composition of a cellular attachement with
an homotopy equivalence.

We consider the 
section $\sigma_n : \Delta_n \to \nabla_n$ of the probability-law map $p_n : \nabla_n \to \Delta_n$ used in Section \ref{sect:presimpl}. We recall
from there that $\sigma_n \circ p_n$ is homotopic
to the identity map, by an homotopy $H_n$ which commutes to the face maps for various $n$'s (see Proposition \ref{prop:commuteface}). Clearly, $\sigma_n, p_n, H_n$ preserve the boundaries of $\nabla_n$ and $\Delta_n$.
In particular we get an attaching map $\partial \sigma_n : \partial \Delta_n \to \partial \nabla_n$, enabling us to
build a relative cellular complex $\partial \nabla_n \subset (\partial \nabla_n)\cup_{\partial \sigma_n} \Delta_n$.
Moreover, our map $\partial \nabla_n \to \nabla_n$ is the composition of this Quillen cofibration with
the map $ f' : (\partial \nabla_n)\cup_{\partial \sigma_n} \Delta_n \to \nabla_n$ obtained by gluing together
the inclusion map $\partial \nabla_n \to \nabla_n$ with the map $\sigma_n : \Delta_n \to \nabla_n$. It remains
to prove that $f'$ is an homotopy equivalence.

Consider the map $p_n \circ f' : (\partial \nabla_n)\cup_{\partial \sigma_n} \Delta_n \to \Delta_n$. It
is equal to the identity map on $\Delta_n$, and to $p_n$ on $\partial \nabla_n$. It admis a section given
by $\sigma_n$ on $\partial \Delta_n$ and to the identity on $\Delta_n$. Moreover, since the homotopy $H_n$ preserves
the boundary, we get that $p_n \circ f'$ is an homotopy equivalence. Since $p_n$ is also an homotopy equivalence,
from the 2-out-of-3 property of homotopy equivalences we get that $f'$ is an homotopy equivalence, which concludes the proof.

\subsection{Proof of Proposition \ref{prop:cofibrationfaiblecornet}}

We prove it in several steps.

\begin{proposition} \label{prop:VnrNablanstrongdefretract}
For every $n \geq 1$ the inclusion $V_n^r \subset \nabla_n$ is a strong deformation retract. In particular, it is an
homotopy equivalence.
\end{proposition}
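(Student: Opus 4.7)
The plan is to lift the standard strong deformation retract of $\Delta_n$ onto the horn $\Lambda_n^r$ to a strong deformation retract of $\nabla_n$ onto $V_n^r$, using the auxiliary map $\mathbf{g}: \mathfrak{M} \times [0,1] \to \mathfrak{M}$ of Section~\ref{sect:prelimmeasure}. On the simplex, the map $H_\Delta(\underline{a}, s) = ((a_i - s\, m(\underline{a}))_{i\neq r},\, a_r + s n\, m(\underline{a}))$, with $m(\underline{a}) = \min_{i \neq r} a_i$, is visibly such a deformation retract: it is linear in $s$, fixes $\Lambda_n^r$ pointwise, equals the identity at $s=0$ and lands in $\Lambda_n^r$ at $s=1$.

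For the lift, for $\underline{A} = (A_0, \ldots, A_n) \in \nabla_n$ (writing $a_i = \la(A_i)$) and $s \in [0,1]$, I would set $B_i(s) = \mathbf{g}(A_i,\, s\, m(\underline{a})/a_i)$ when $i \neq r$ and $a_i > 0$, $B_i(s) = \emptyset$ when $i \neq r$ and $a_i = 0$, and $B_r(s) = \Omega \setminus \bigsqcup_{i \neq r} B_i(s)$. Property~(1) of $\mathbf{g}$ gives $\la(B_i(s)) = a_i - s\, m(\underline{a})$ for $i \neq r$, hence $\la(B_r(s)) = a_r + sn\, m(\underline{a})$, so $\underline{B}(s) \in \nabla_n$ and $p_n(\underline{B}(s)) = H_\Delta(p_n(\underline{A}), s)$. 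The three required properties follow immediately: $B_i(0) = \mathbf{g}(A_i, 0) = A_i$ so the homotopy starts at the identity; $B_i(1)$ has measure $0$ at any index $i \neq r$ realizing $m(\underline{a})$, so $\underline{B}(1) \in V_n^r$; and if $\underline{A} \in V_n^r$ then $m(\underline{a}) = 0$, forcing $B_i(s) = A_i$ for all $i$ and all $s$, so the homotopy is constant on $V_n^r$.

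The only nontrivial verification is joint continuity of $(\underline{A}, s) \mapsto \underline{B}(s)$. By Lemma~\ref{lem:continuiteLOS} it suffices to check each coordinate $B_i : \nabla_n \times [0,1] \to \mathfrak{M}$. On the open locus where every $a_j$ with $j \neq r$ is positive, the argument $s\, m(\underline{a})/a_i$ is a continuous function of $(\underline{A}, s)$ and the Lipschitz-type estimate~(5) for $\mathbf{g}$ yields continuity of $B_i$. At a point $\underline{A}^0 \in V_n^r$ some $a_j^0$ vanishes, so $m(\underline{a}) \to 0$ along any approaching sequence; for indices $i$ with $a_i^0 > 0$ the ratio $s\, m(\underline{a})/a_i$ tends to $0$ and $B_i \to A_i^0$ by~(5); for indices $i$ with $a_i^0 = 0$ the ratio may oscillate, but the set-theoretic inclusion $B_i(s) \subset A_i$ together with $\la(A_i) \to 0$ forces $B_i \to \emptyset = A_i^0$ regardless. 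Continuity of $B_r(s)$ is then automatic from that of complements and finite unions in $\mathfrak{M}$.

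The main obstacle is precisely this last continuity analysis at points of $V_n^r$, where the defining formula produces the indeterminate form $s\, m(\underline{a})/a_i = 0/0$. The resolution is that $\mathbf{g}(A_i,\cdot)$ contracts $A_i$ to a set of measure at most $\la(A_i)$, so the ambiguity of the numerical argument is absorbed by the vanishing of the underlying set.
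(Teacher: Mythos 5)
Your construction is correct and is essentially the paper's own proof: both shrink each $A_i$, $i\neq r$, via the map $\mathbf{g}$ by an amount governed by the minimal measure among the coordinates away from $r$ (the paper uses $\min_{j\neq i,r}\la(A_j)$ where you use $\min_{j\neq r}\la(A_j)$, an immaterial variant), put the remainder into the $r$-th coordinate as the complement, and observe that the homotopy is stationary on $V_n^r$ and lands in $V_n^r$ at time $1$. Your careful continuity check at the locus $\la(A_i)=0$, using $B_i(s)\subset A_i$, simply spells out what the paper asserts directly from the continuity of $\mathbf{g}$ and of the elementary set-theoretic operations.
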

\begin{proof}
We define $H : \nabla_n \times I \to \nabla_n$ by $H(\underline{A},t) = \underline{B}$
with 
$$
B_i = \mathbf{g}\left(A_i,t \frac{\min_{j \neq i,r} \la(A_j)}{\la(A_i)}\right)
$$
for $i \neq r$ with the convention $0/0 = 1$, and $B_r = \Omega \setminus \bigcup_{i \neq r} B_i$.
From the continuity of $\mathbf{g}$ and of the elementary set-theoretic operations
one gets immediately that $H$ is continuous.
Since $B_i \subset A_i$ for $i \neq r$,
we have $H(V_n^r\times I) \subset V_n^r$.
Moreover, for $t = 0$ we have $B_i = A_i$ for all $i$ that is $H(\underline{A},0) = \underline{A}$ for every $\underline{A} \in \nabla_n$.

Consider $\underline{A} \in V_n^r$. By definition there exists $i_0 \neq r$ with $A_{i_0} = \emptyset$.
From $A_{i_0} = \emptyset$ one gets $B_{i_0} = \emptyset$, and also
$\min_{j \neq i,r} \la(A_j) = 0$ hence $B_j = \mathbf{g}(A_j,0) = A_j$
for every $j \neq r$. It follows that $B_r = A_r$ hence $H(\underline{A},t) = \underline{A}$ for every $\underline{A} \in V_n^r$ and $t \in I$.

Now assume $t = 1$. 
Notice that there exists $i \neq r$ such that $\la(A_i) = 
\min_{j \neq i,r} \la(A_j)$.
 Then, for $t = 1$,
$B_i = \mathbf{g}(A_i,1) = \emptyset$, and this proves $H(\underline{A},1)  \in V_n^r$.

Therefore $H$ indeed provides a strong deformation retract.

\end{proof}

\begin{proposition}
For $n \geq 1$ the inclusion $V_n^r \subset \nabla_n$ is a closed cofibration.
\end{proposition}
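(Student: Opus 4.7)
The plan is to imitate closely the strategy used for Proposition \ref{prop:cofibnabla}, since $V_n^r$ plays for the horn $\Lambda_n^r$ exactly the role that $\partial \nabla_n$ plays for $\partial \Delta_n$. First, closedness: one has $V_n^r = p_n^{-1}(\Lambda_n^r)$ because, for $\underline{A} \in \nabla_n$, the condition $\la(A_i) = 0$ is the same as $A_i = \emptyset$ in $\mathfrak{M}$. Since $\Lambda_n^r$ is closed in $\Delta_n$ and $p_n$ is continuous, $V_n^r$ is a closed subset of $\nabla_n$.

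To prove the cofibration property, I will construct a retraction $\Psi : \nabla_n \times I \to (\nabla_n \times \{0\}) \cup (V_n^r \times I)$. The classical fact that $\Lambda_n^r \subset \Delta_n$ is a closed cofibration (by the same radial-projection-from-a-point-above recipe as in Figure \ref{fig:cofibDelta}, replacing the barycenter $\underline{v}$ by the barycenter $\underline{v}^r$ of the opposite face to $r$) provides an explicit retraction $\rho_\Delta : \Delta_n \times I \to (\Delta_n \times \{0\}) \cup (\Lambda_n^r \times I)$. Write $\rho_\Delta = (f, \varphi)$ with $f : \Delta_n \times I \to \Delta_n$ and $\varphi : \Delta_n \times I \to I$. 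Since $\rho_\Delta$ is the identity on $\Lambda_n^r \times I$, for every $\underline{a} \in \Lambda_n^r$ the map $f(\underline{a},\bullet)$ is constant equal to $\underline{a}$.

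Now apply Proposition \ref{prop:HLPplus} with $X = \nabla_n$ to the homotopy
$$
H = f \circ (p_n \times \mathrm{Id}_I) : \nabla_n \times I \longrightarrow \Delta_n,
$$
and the section $h = \mathrm{Id}_{\nabla_n}$, which satisfies $p_n \circ h = H(\bullet,0)$ since $f(\underline{a},0) = \underline{a}$. This yields a continuous lift $\tilde{H} : \nabla_n \times I \to \nabla_n$ with $p_n \circ \tilde{H} = H$ and $\tilde{H}(\bullet,0) = \mathrm{Id}_{\nabla_n}$, and such that $\tilde{H}(x,\bullet)$ is constant whenever $H(x,\bullet)$ is. Define
$$
\Psi(x,t) = \bigl( \tilde{H}(x,t),\, \varphi(p_n(x),t) \bigr).
$$
Then $(p_n \times \mathrm{Id}_I) \circ \Psi = \rho_\Delta \circ (p_n \times \mathrm{Id}_I)$ takes values in $(\Delta_n \times \{0\}) \cup (\Lambda_n^r \times I)$, so $\Psi$ takes values in the preimage $(\nabla_n \times \{0\}) \cup (V_n^r \times I)$.

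It remains to check that $\Psi$ is the identity on the target subspace. On $\nabla_n \times \{0\}$ this is clear since $\tilde{H}(x,0) = x$ and $\varphi(p_n(x),0) = 0$. On $V_n^r \times I$, let $x \in V_n^r$; then $p_n(x) \in \Lambda_n^r$, hence $f(p_n(x),\bullet)$ is constant equal to $p_n(x)$, so $H(x,\bullet)$ is constant, and by the `moreover' clause of Proposition \ref{prop:HLPplus} we get $\tilde{H}(x,t) = \tilde{H}(x,0) = x$ for all $t$. Moreover $\varphi(p_n(x),t) = t$ because $\rho_\Delta$ is the identity on $\Lambda_n^r \times I$, so $\Psi(x,t) = (x,t)$, concluding the proof. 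The main (very mild) obstacle is the verification that the classical radial retraction $\rho_\Delta$ for the horn inclusion genuinely fixes $\Lambda_n^r \times I$ pointwise, which is built into the geometric construction, so everything else is formal once the HLP is invoked.
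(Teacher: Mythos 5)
Your proof is correct, but it follows a different route from the one the paper uses for this particular statement. The paper proves the horn case by writing down an explicit retraction of $\nabla_n \times I$ onto $(\nabla_n \times \{0\}) \cup (V_n^r \times I)$ in measure-theoretic terms: with $m(\underline{A}) = \min_{i \neq r} \la(A_i)$, it sets the $i$-th component ($i \neq r$) equal to $\mathbf{g}\bigl(A_i, (m(\underline{A}) + \min(u - m(\underline{A}),0))/\la(A_i)\bigr)$, takes $B_r$ to be the complement, and uses $\max(u - m(\underline{A}),0)$ as the new time coordinate; continuity then follows from the continuity of $\mathbf{g}$ and the elementary set operations. You instead transplant the paper's own proof of Proposition \ref{prop:cofibnabla} to the horn: take a classical retraction $\rho_\Delta$ of $\Delta_n \times I$ onto $(\Delta_n \times \{0\}) \cup (\Lambda_n^r \times I)$ (your radial projection from above the barycenter of the missing face $\{a_r = 0\}$ does work --- for a point with $a_r > 0$ the $r$-th coordinate increases along the ray, so the ray can only leave the cylinder through the bottom or through a wall $\{a_i=0\}$, $i \neq r$, all of which lie in the target, and the fixed-point property on the target is automatic), lift its first component through $p_n$ using Proposition \ref{prop:HLPplus} with the ``constant paths lift to constant paths'' clause, and use $V_n^r = p_n^{-1}(\Lambda_n^r)$ to see that $\Psi$ lands in the right subspace and restricts to the identity there. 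Both arguments are sound; the paper's buys an explicit, self-contained formula and incidentally illustrates the flexibility of the map $\mathbf{g}$, while yours buys uniformity with the boundary case $\partial\nabla_n \subset \nabla_n$ and avoids inventing a new formula, at the price of relying again on the homotopy lifting property of $p_n$ from \cite{SRV} and on the classical horn retraction.
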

\begin{proof}
We already know that $V_n^r$ is a closed subset of $\nabla_n$. We need to exhibit
a retract of $\nabla_n \times I$ onto $(\nabla_n \times \{ 0 \}) \cup (V_n^r \times I)$.
We define $H : \nabla_n \times I \to (\nabla_n \times \{ 0 \}) \cup (V_n^r \times I)$ as follows.
First of all, define a continuous map $m : \nabla_n \to [0,1]$ by $m(\underline{A}) = \min_{i \neq r} \la(A_i)$,
and then define $H(\underline{A},u) = ((H(\underline{A},u)_i)_{i \in [n]},\max\left(u-m(\underline{A}),0\right))$ by setting
$$H(\underline{A},u)_i = 
\mathbf{g}\left(A_i, \frac{m(\underline{A}) + \min( u-m(\underline{A}),0)}{\la(A_i)}\right)
$$
for $i \neq r$, $H(\underline{A},u)_r = \Omega \setminus \bigcup_{i \neq r} H(\underline{A},u)_i$.
This formula is made so that $H(\underline{A},u)_i$ for $i \neq r$ has measure $\la(A_i) - m(\underline{A})$ for $u \geq m(\underline{A})$ and $\la(A_i) -u$ for $u \leq m(\underline{A})$.
In particular, $H(\underline{A},u) \in V_n^r \times I$ for $u \geq m(\underline{A})$
and $H(\underline{A},u) \in \nabla_n \times \{ 0 \}$ for $u \leq m(\underline{A})$.
Moreover, for $\underline{A} \in V_n^r$ we have $m(\underline{A})=0$ hence $H(\underline{A},u)_i
= \mathbf{g}(A_i,0) = A_i$ for every $i \neq r$, whence $H(\underline{A},u) = (\underline{A},u)$.
Similarly, for $u = 0$, we have $\max\left(u-m(\underline{A}),0\right) = 0$
and $H(\underline{A},u)_i =  \mathbf{g}(A_i,0) = A_i$ for every $i \neq r$, $H(\underline{A},0) = (\underline{A},0)$.
Since $H$ is continuous as a composite of continuous maps, this provides a suitable retract.
\end{proof}

In order to prove that these inclusions $V_n^r \to \nabla_n$ are Cole cofibrations, it remains to
prove that they can be obtained as the composition of a Quillen cofibration with an homotopy
equivalence. We consider the restriction $ \sigma_{n-1}^{(r)} : \Delta_{n-1} \to \nabla_n$ of $\sigma_n : \Delta_n \to \nabla_n$ to the $r$-th face
of its boundary, and then its restriction $\partial \sigma_{n-1}^{(r)}$ to the boundary of the $r$-th face. This provides an attachment map $\partial \sigma_{n-1}^{(r)} : \partial \Delta_{n-1} \to V_n^r$,
mapping $\underline{a} \in \partial \Delta_{n-1}$ to $\sigma_{n-1}^{(r)}(\underline{a}) = \sigma_n(\Delta(D_r^c)(\underline{a}))$.

 Let $Y = V_n^r \cup_{\partial \sigma_{n-1}^{(r)}} \Delta_{n-1}$. Since $V_n^r$ and the $r$-th face $\nabla_{n-1}$
 of $\partial \nabla_n$ are closed inside $\partial \nabla_n$, the set-theoretic decomposition
$\partial \nabla_n = V_n^r \cup \nabla_{n-1}$ is a topological union. Therefore there is a  
map $f' : Y \to \partial \nabla_n$
 obtained by mapping an element of $V_n^r$ to itself, and $\underline{a} \in \Delta_{n-1}$ to $\sigma_n(\Delta(D_r^c)(\underline{a}))$.

We then consider a map $\partial'\sigma_n : \partial \Delta_n \to Y$ given by mapping the $r$-th face identically to the copy of $\Delta_{n-1}$ just added, and by applying $\sigma_n$ to the other faces.
In formulae, $\underline{a}=(a_0,\dots,a_n) \in \partial \Delta_n$ is mapped to $(a_0,\dots,a_{r-1},a_{r+1},\dots,a_n) \in \Delta_{n-1}$ if $a_r = 0$, and to $\sigma_n(\underline{A}) \in V_n^r$ otherwise. We let $Z = Y  \cup_{\partial'\sigma_n} \Delta_n$. Again because $\partial \nabla_n$ is closed inside $\nabla_n$, we can
write $\nabla_n$ as a topological union $(V_n^r \cup \nabla_{n-1})\cup \nabla_n$ and
there is a map $f'' :  Z \into \nabla_n$
obtained by gluing $f' : Y \to \nabla_n$ with $\sigma_n : \Delta_n \to \nabla_n$.
Clearly, the restriction of $f''$ to $V_n^r$ is the inclusion map $V_n^r \to \nabla_n$.

The inclusion map $V_n^r \into Z$ being a relative CW-complex,
proving that the inclusion $V_n^r \to \nabla_n$ is a Cole cofibration amounts to proving that
$f''$ is an homotopy equivalence.

We post-compose it with $p_n : \nabla_n \to \Delta_n$ and get a map
$$
Z = (V_n^r \cup \Delta_{n-1}) \cup \Delta_n \stackrel{p'}{\to} (\Lambda_n^r \cup \Delta_{n-1})\cup \Delta_n = \Delta_n
$$
equal to $p_n$ on $V_n^r$ and to the identity map on the rest This map $p'$ admits a section $\sigma'$
defined by $\sigma_n$ on $\Lambda_n^r$ and to the identity map on the rest. In order to prove
that $\sigma' \circ p'$ is homotopic to the identity map, we build $H_Z : I \times Z \to Z$ via the decomposition
$$
I \times Z = (I \times V_n^r) \cup (I \times \Delta_{n-1} ) \cup (I \times \Delta_n )
$$
gluing the map $H_n $ of Proposition \ref{prop:commuteface} on $I \times V_n^r$ with the identity map on the two other parts.
This is possible because $\forall \underline{a} \in \Delta_n \  H_n(t,\sigma_n(\underline{a})) = \sigma_n(\underline{a})$ by Proposition \ref{prop:commuteface}. This provides an homotopy proving that $p' = p_n \circ f''$ is an homotopy
equivalence. Since $p_n$ is an homotopic equivalence so is $f''$ by the 2-out-of-3 principle and this proves the claim.

\end{document}